\def\R{{\mathbb R}}
\def\a{\alpha}
\def\b{\beta}
\def\d{\delta}
\def\g{\gamma}
\def\s{\sigma}
\def\t{\theta}
\def\l{\lambda}
\def\p{\partial}
\def\O{\Omega}
\def\o{\omega}
\def\e{\varepsilon}
\def\v{\varphi}
\def\G{\Gamma}
\def\mc{\mathcal}
\def\mf{\mathfrak}
\def\L{\Lambda}
\numberwithin{equation}{section}
\theoremstyle{definition}
\theoremstyle{plain}
\newtheorem{theorem}{Theorem}[section]
\newtheorem{proposition}{Proposition}[section]
\newtheorem{corollary}{Corollary}[section]
\theoremstyle{definition}
\begin{document}

\title[Principal eigenvalues for cooperative periodic-parabolic systems]
{Principal eigenvalue and maximum principle  for cooperative periodic-parabolic systems}

\author{I. Ant\'on}
\address{Departamento de An\'alisis Matem\'atico y Matem\'atica Aplicada,
Universidad Complutense de Madrid,
Madrid 28040, Spain}
\email{iantonlo@ucm.es}

\author{J. L\'opez-G\'omez}
\address{Instituto de Matem\'atica Interdisciplinar (IMI),
Departamento de An\'alisis Matem\'atico y Matem\'atica Aplicada,
Universidad Complutense de Madrid,
Madrid 28040, Spain}
\email{Lopez\_Gomez@mat.ucm.es}

\maketitle

\begin{abstract}
This paper classifies  the set of supersolutions of a general class of periodic-parabolic problems in the presence of a positive supersolution. From this result
we characterize the positivity of the underlying resolvent operator through the positivity of the associated principal eigenvalue and the existence of a positive strict supersolution. Lastly, this (scalar) characterization is used to characterize the strong maximum principle for a class of periodic-parabolic systems of cooperative type under arbitrary boundary conditions of mixed type.

\vspace{0.1cm}

\noindent \emph{2010 MSC:}   Primary: 35K57, 35B10, 35B50. Secondary: 35P05.
\vspace{0.1cm}

\noindent \emph{Keywords and phrases:} periodic-parabolic problems, maximum principle, principal eigenvalue, positivity of the resolvent, positive strict supersolution.
\vspace{0.1cm}

\noindent This paper has been supported by the IMI of Complutense University and the Ministry of Economy and
Competitiveness of Spain under Grant MTM2015-65899-P
\end{abstract}

\vspace{0.2cm}

%%%%%%%%%%%%%%%%%%%%%%%%%%%%%%%%%%%%%%%%%%%%%%%%%%%%%%%%%%%%%%%%%%%%
\section{Introduction}

\noindent This paper gives a periodic-parabolic counterpart of the second classification theorem of
J. L\'opez-G\'omez \cite{LG03} and infers from it a periodic-parabolic counterpart
of \cite[Th. 2.5]{LG96}  and  Theorem 2.4 of H. Amann and J. L\'opez-G\'omez \cite{AL98}. Then, based on that result,
the main theorem  of \cite{ALG}, which was originally stated for cooperative systems subject to   Dirichlet boundary conditions,  is substantially sharpened up to cover the case of general boundary operators of mixed type. The elliptic counterparts of these results have shown to be a milestone for the generation of new results in spatially heterogeneous nonlinear elliptic equations and cooperative systems (see, e.g.,  P. \'Alvarez-Caudevilla and
J. L\'opez-G\'omez \cite{ALG1,ALG2}, M. Molina-Meyer \cite{MM1,MM2,MM3}, H. Amann \cite{Am10} and the recent monograph \cite{LG15}). Thus, the findings of this paper seem imperative
for analyzing a wide variety of problems in the presence of spatio temporal heterogeneities.
\par
In this paper we are working under the following general assumptions:
\begin{enumerate}
\item[(A1)] $\O$ is a bounded subdomain (open and connected set) of $\R^N$, $N\geq 1$, of class $\mathcal{C}^{2+\t}$ for some $0<\t\leq 1$, whose boundary, $\p\O$, consists of two disjoint open and closed subsets, $\G_0$ and $\G_1$, respectively, such that  $\p\O :=\G_0\cup \G_1$ (as they are disjoint, $\G_0$ and $\G_1$ must be of class  $\mc{C}^{2+\t}$).

\item[(A2)] For a given $T>0$, we consider the non-autonomous differential operator
\begin{equation}
\label{1.1}
  \mathfrak{L}:=\mathfrak{L}(x,t):= -\sum_{i,j=1}^N a_{ij}(x,t)\frac{\p^2}
  {\p x_i \p x_j}+\sum_{j=1}^N b_j(x,t) \frac{\p}{\p x_j}+c(x,t)
\end{equation}
with $a_{ij}=a_{ji}, b_j, c \in F$ for all $1\leq i, j\leq N$, where
\begin{equation}
\label{1.2}
  F:= \left\{u\in \mathcal{C}^{\t,\frac{\t}{2}}(\bar\O\times \R;\R)\;: \;\;
  u(\cdot,T+t)=u(\cdot,t) \;\; \hbox{for all}\; t \in\R \right\}.
\end{equation}
Moreover, we assume that $\mathfrak{L}$ is uniformly elliptic in $\bar Q_T$, where $Q_T$ stands for the parabolic cylinder
\begin{equation*}
  Q_T:=\O \times (0,T),
\end{equation*}
i.e., there exists $\mu>0$ such that
\begin{equation*}
  \sum_{i,j=1}^N a_{ij}(x,t) \xi_i\xi_j\geq \mu\, |\xi|^2\quad \hbox{for all} \;\; (x,t,\xi)\in \bar Q_T\times \R^N,
\end{equation*}
where $|\cdot|$ stands for the  Euclidean norm of $\R^N$.

\item[(A3)] $\mathfrak{B}: \mathcal{C}(\G_0)\oplus\mathcal{C}^1(\O\cup \G_1) \to
C(\p\O)$ stands for the boundary operator
\begin{equation}
\label{1.3}
    \mathfrak{B} \xi := \left\{ \begin{array}{ll}
    \xi \qquad & \hbox{on } \;\;\G_0 \\     \frac{\p \xi}{\p \nu} + \b(x) \xi \qquad &
   \hbox{on } \;\; \G_1  \end{array} \right.
\end{equation}
for each $\xi\in \mathcal{C}(\G_0)\oplus \mathcal{C}^1(\O\cup\G_1)$, where $\b\in \mathcal{C}^{1+\t}(\G_1)$ and
\[
  \nu =(\nu_1,...,\nu_N)\in \mathcal{C}^{1+\t}(\p\O;\R^N)
\]
is an outward pointing nowhere tangent vector field.
\end{enumerate}

Thus, rather crucially, in this paper the function $\b$ can change sign, in strong contrast with the classical setting dealt with by P. Hess \cite{Hess} and, more recently, by R. Peng and X. Q. Zhao \cite{PZ}, where it was imposed the strongest condition $\b \geq 0$. In our general setting, $\mathfrak{B}$ is the \emph{Dirichlet boundary operator} on $\G_0$, and the \emph{Neumann}, or a \emph{first order regular oblique derivative boundary operator}, on $\G_1$, and either $\G_0$, or $\G_1$, can be empty. As in this paper $\b$ can change of sign, our results can be applied straight away to deal with cooperative periodic-parabolic systems under general \emph{nonlinear mixed boundary conditions} by the first time in the literature, which was a challenge. 
\par
Besides the space $F$ introduced in \eqref{1.2}, this paper also considers the Banach spaces of H\"{o}lder continuous $T$--periodic functions
\begin{equation*}
  E:= \left\{u\in \mathcal{C}^{2+\t,1+\frac{\t}{2}}(\bar\O\times \R;\R)\;: \quad  u(\cdot,T+t)=u(\cdot,t) \;\; \hbox{for all}\;\; t \in\R \right\}
\end{equation*}
and the periodic-parabolic operator
\begin{equation}
\label{i.4}
  \mathcal{P}:= \p_t + \mathfrak{L}(x,t).
\end{equation}
A function $h \in E$ is said to be a \emph{supersolution} of $(\mathcal{P},\mathfrak{B},Q_T)$ if
\begin{equation}
\label{i.5}
  \left\{ \begin{array}{ll} \mathcal{P}h \geq 0 &\quad
  \hbox{in}\;\;Q_T=\O\times (0,T), \\  \mathfrak{B}h \geq 0 &\quad\hbox{on}\;\;\p\O\times [0,T],\end{array}\right.
\end{equation}
and it is said to be a \emph{strict supersolution} of $(\mathcal{P},\mathfrak{B},Q_T)$ when, in addition, some of these inequalities is strict.
\par
The first goal of this paper is establishing the next periodic-parabolic counterpart of \cite[Th. 5.2]{LG03} (see also
Theorem 2.4 of \cite{LG13} and Theorem 2 of W. Walter \cite{Wa89}).

\begin{theorem}
\label{th11} Suppose $(\mathcal{P},\mathfrak{B},Q_T)$ admits a non-negative
supersolution $h\in E\setminus\{0\}$. Then, any supersolution $u\in E$ of $(\mathcal{P},\mathfrak{B},Q_T)$ satisfies one of the following alternatives:
\begin{enumerate}
\item[{\rm (a)}]  $u=0$ in $Q_T$.

\item[{\rm (b)}] $u(\cdot,t)\gg 0$ for all $t\in [0,T]$, in the sense that, for every $t\in [0,T]$, $u(x,t)>0$ for all $x\in\O\cup\G_1$ and
\begin{equation}
\label{i.6}
  \frac{\p u}{\p \nu}(x,t)<0\qquad \hbox{for every }\;\; x \in u^{-1}(0)\cap \G_0.
\end{equation}

\item[{\rm (c)}] There exists a constant $m<0$ such that $u = m h$ in $Q_T$. In such case,
\begin{equation}
\label{1.7}
  \left\{ \begin{array}{ll} \mathcal{P}h =0 &\quad
  \hbox{in}\;\;Q_T, \\  \mathfrak{B}h = 0  &\quad\hbox{on}\;\;\p\O\times [0,T].\end{array}\right.
\end{equation}
\end{enumerate}
\end{theorem}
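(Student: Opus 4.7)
My approach mirrors the elliptic second classification theorem of \cite{LG03}, replacing its scalar maximum principle with a periodic-parabolic strong maximum principle and Hopf boundary lemma, using $h$ as a pivot to handle the mixed boundary operator with sign-changing $\beta$. The first step is to upgrade the hypothesis $h\in E\setminus\{0\}$ with $h\ge 0$ to the conclusion $h\gg 0$. For this I apply the periodic-parabolic strong maximum principle to $h$ itself: since $\mathcal{P}h\ge 0$, $\mathfrak{B}h\ge 0$ and $h\ge 0$, the usual non-periodic SMP applied on $\bar\O\times[t_0,t_0+T]$ (with $v(\cdot,t_0)$ as initial datum) combined with the periodicity $h(\cdot,t_0+T)=h(\cdot,t_0)$ forces $h(\cdot,t)>0$ in $\O\cup\G_1$ for every $t$, and the parabolic Hopf lemma gives $\tfrac{\p h}{\p\nu}<0$ on $h^{-1}(0)\cap\G_0$. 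The sign of $\beta$ on $\G_1$ is handled by a preliminary change of variable $h\mapsto e^{-\lambda t}h$ with $\lambda$ large, which makes the zero-order coefficient positive without affecting the Robin operator.

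Given an arbitrary supersolution $u\in E$, the core step is a sweeping argument. I consider the family $u+sh$, $s\ge 0$, and define
\begin{equation*}
  s^{*}:=\inf\{\,s\ge 0\,:\,u+sh\ge 0\;\;\text{in }Q_T\,\}.
\end{equation*}
Finiteness of $s^{*}$ is guaranteed by $h\gg 0$: on any compact subset of $\O\cup\G_1$ the function $h$ admits a positive lower bound, while near $\G_0$ the Hopf-type estimate $\tfrac{\p h}{\p\nu}<0$ combined with $u\ge 0$ on $\G_0$ (which comes from $\mathfrak{B}u\ge 0$) makes $sh$ dominate $u$ for $s$ sufficiently large. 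By continuity, $u+s^{*}h\ge 0$ in $Q_T$.

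Next I split into two cases. If $s^{*}=0$, then $u\ge 0$ is itself a non-trivial supersolution (unless $u\equiv 0$, which is alternative~(a)); the same periodic-parabolic SMP together with the Hopf lemma, applied to $u$, then yields alternative~(b). If $s^{*}>0$, I claim $v:=u+s^{*}h\equiv 0$, which gives alternative~(c) with $m=-s^{*}<0$. Indeed, $v$ is a supersolution (by linearity), $v\ge 0$, and if $v\not\equiv 0$ the SMP would yield $v\gg 0$; but then for all small $\e>0$ one still has $u+(s^{*}-\e)h=v-\e h\ge 0$ in $Q_T$ (in the interior because $v$ is uniformly positive on compacta where it is strictly positive, and near $\G_0$ because $v$ and $h$ both vanish linearly in the distance to $\G_0$ with $\tfrac{\p v}{\p\nu}<0<\tfrac{\p h}{\p\nu}\cdot(-1)$ so that the ratio $v/h$ stays bounded below by a positive constant), contradicting the minimality of $s^{*}$. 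Finally, inserting $u=-s^{*}h$ into $\mathcal{P}u\ge 0$ and $\mathfrak{B}u\ge 0$ with $-s^{*}<0$ and $\mathcal{P}h\ge 0$, $\mathfrak{B}h\ge 0$ forces \eqref{1.7}.

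\textbf{Main obstacle.} The hardest part is isolating and proving the precise form of the periodic-parabolic SMP with Hopf boundary lemma that is needed here, given that $\beta$ is allowed to change sign and $c$ need not be non-negative. The way to circumvent both difficulties in one stroke is to substitute $w:=v/h$ (legitimate thanks to Step 1): a direct computation shows that $w$ solves a transformed periodic-parabolic inequality whose zero-order coefficient is $\mathcal{P}h/h\ge 0$ and whose Robin coefficient on $\G_1$ becomes $-\tfrac{1}{h}\tfrac{\p h}{\p\nu}$, which, by $\mathfrak{B}h\ge 0$, is bounded above by $\beta$ and in any case has a sign compatible with the classical Hopf lemma. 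On this transformed problem the standard parabolic SMP applies, and pulling the conclusion back through multiplication by $h>0$ yields the strongly positive conclusion needed in the sweeping argument above.
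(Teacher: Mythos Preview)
Your overall strategy---the sweeping family $u+sh$, the dichotomy at $s^{*}$, and the contradiction via boundary behaviour near $\Gamma_0$---is exactly the paper's. Two technical points, however, are treated differently there and deserve comment.

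First, the quotient $w=v/h$ in your ``Main obstacle'' paragraph is not well defined on all of $\bar Q_T$ after Step~1, since Step~1 only yields $h>0$ on $(\Omega\cup\Gamma_1)\times[0,T]$ and $h$ may vanish on $\Gamma_0$. The paper avoids this by first proving the classification under the \emph{extra} hypothesis $h>0$ on $\bar Q_T$ (Theorem~3.1), and then manufacturing such an everywhere-positive supersolution---namely $e^{M\psi}$, a strict supersolution of $(\mathcal{P}+\omega,\mathfrak{B},Q_T)$ for large $\omega$ (Proposition~3.1)---so that Corollary~3.1 applies to \emph{any} nonnegative nonzero supersolution. Your $e^{-\lambda t}$ trick is a legitimate alternative route to Step~1, but it does not by itself justify dividing by the given $h$ later on. (Incidentally, the transformed Robin coefficient on $\Gamma_1$ is $\mathfrak{B}h/h\ge 0$, not $-h^{-1}\partial_\nu h$.)

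Second, both the finiteness of $s^{*}$ and the final contradiction when $v=u+s^{*}h\gg 0$ hinge on the \emph{uniform} estimate $w\ge\delta\,\mathrm{dist}(\cdot,\Gamma_0)$ for every nonnegative nonzero supersolution $w$. Your phrases ``Hopf-type estimate'' and ``both vanish linearly so that $v/h$ stays bounded below'' capture the right intuition, but turning the pointwise Hopf inequality into this uniform bound over $\Gamma_0\times[0,T]$ is the principal technical content of the paper's Section~4 (Theorem~4.1, via an explicit barrier $e^{\alpha(R^2-|x-x_0|^2)}-1$ and a covering argument). The paper then runs the contradiction by extracting sequences $(x_{k_m},t_{k_m})$ accumulating on $\Gamma_0$ and playing the uniform lower bound for $v_\mu$ against a Lipschitz upper bound for $-u$, rather than appealing directly to a ratio $v/h$. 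Your plan is sound; you are simply underestimating where the work lies.
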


In particular, $h$ satisfies Alternative (b) if it is a strict supersolution of $(\mathcal{P},\mathfrak{B},Q_T)$. Hence, if $(\mathcal{P},\mathfrak{B},Q_T)$ admits  a non-negative strict supersolution $h \in E\setminus\{0\}$, and $f\in E$ satisfies $f>0$, in the sense that $f\geq 0$ but $f\neq 0$, then, any ($T$-periodic) solution $u\in E$ of
\begin{equation}
\label{i.7}
  \left\{ \begin{array}{ll} \mathcal{P}u =f &\quad   \hbox{in}\;\;Q_T, \\
  \mathfrak{B}u = 0 &\quad\hbox{on}\;\;\p\O\times [0,T], \end{array}\right.
\end{equation}
satisfies $u(\cdot,t)\gg 0$ for all $t\in [0,T]$.  Consequently, the resolvent operator  $\mathcal{P}^{-1}:F\to F$ can be regarded as
a compact and strongly order preserving operator and, therefore, owing to the generalized version of the Krein--Rutman
theorem given in Section 6, the linear eigenvalue problem
\begin{equation}
\label{i.8}
  \left\{ \begin{array}{ll} \mathcal{P}u =\l u &\quad   \hbox{in}\;\;Q_T, \\
  \mathfrak{B}u = 0 &\quad\hbox{on}\;\;\p\O\times [0,T], \end{array}\right.
\end{equation}
possesses a unique principal eigenvalue, $\l_1[\mathcal{P},\mathfrak{B},Q_T]$,
associated with a positive eigenfunction
\begin{equation*}
  \v \in E_{\mathfrak{B}}\equiv \{u\in E\;:\;\; \mathfrak{B}u=0\}.
\end{equation*}
Moreover, adapting the arguments of \cite[Ch. 7]{LG13}, the principal eigenvalue is algebraically simple and strictly dominant. Although the existence of the principal eigenvalue is a classical result attributable to A. Beltramo and
P. Hess \cite{BH} in the special case when $\b \geq 0$, the corresponding existence result might be knew in the general setting of this paper, where $\b$ can change sign.
\par
Since the principal eigenfunction itself provides us with a positive strict supersolution
of $(\mathcal{P},\mathfrak{B},Q_T)$ if  $\l_1[\mc{P},\mf{B},Q_T]>0$, the next generalized scalar counterpart of \cite[Th. 2.2]{ALG} holds. Note that, in \cite{ALG},  the authors dealt with the special case when $\G_1=\emptyset$.

\begin{theorem}
\label{th12}
The following conditions are equivalent:
\begin{itemize}
\item[(a)] $\l_1[\mc{P},\mf{B},Q_T]>0$.

\item[(b)] $(\mathcal{P},\mathfrak{B},Q_T)$ possesses  a non-negative strict supersolution $h\in E\setminus\{0\}$.

\item[(c)]  Any strict supersolution $u\in E$ of
$(\mathcal{P},\mathfrak{B},Q_T)$ satisfies Alternative (b) of Theorem \ref{th11}, i.e., $(\mathcal{P},\mathfrak{B},Q_T)$ satisfies the strong maximum principle.
\end{itemize}
\end{theorem}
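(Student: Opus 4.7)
My plan is to prove the equivalence by the cyclic chain $(a)\Rightarrow(b)\Rightarrow(c)\Rightarrow(a)$. The first implication is immediate: taking the principal eigenfunction $\varphi\gg 0$ of $(\mathcal{P},\mathfrak{B},Q_T)$ whose existence was recorded just before the statement, the identities $\mathcal{P}\varphi=\lambda_1\varphi$ and $\mathfrak{B}\varphi=0$ together with $\lambda_1>0$ yield $\mathcal{P}\varphi>0$ strictly in $Q_T$, so $\varphi$ itself is a non-negative strict supersolution.

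For $(b)\Rightarrow(c)$, I would feed the strict supersolution $h$ supplied by $(b)$ into Theorem~\ref{th11} as the privileged non-negative supersolution. Any strict supersolution $u\in E$ then satisfies one of the three alternatives of Theorem~\ref{th11}. Alternative (a), $u\equiv 0$, is incompatible with $u$ being strict, since every defining inequality would collapse to an equality. Alternative (c), $u=mh$ with $m<0$, would force $h$ to satisfy the homogeneous identities \eqref{1.7}, contradicting the strictness of $h$. Only alternative (b) remains, which is exactly the strong maximum principle.

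For the main implication $(c)\Rightarrow(a)$, I would argue by contradiction, supposing $\lambda_1\leq 0$ and letting $\varphi$ be the principal eigenfunction. When $\lambda_1<0$, the function $u:=-\varphi$ satisfies $\mathcal{P}u=-\lambda_1\varphi>0$ strictly in $Q_T$ and $\mathfrak{B}u=0$, so $u$ is a strict supersolution with $u\ll 0$, in direct contradiction to $(c)$. The borderline case $\lambda_1=0$ is the genuine obstacle, because the $-\varphi$ construction now yields only a non-strict supersolution. My approach here would be to pair a hypothetical strict supersolution $u$ with the adjoint principal eigenfunction $\varphi^{*}\gg 0$ via the integration-by-parts identity
\[
    \int_{Q_T}(\mathcal{P}u)\varphi^{*}\,dx\,dt + \int_{0}^{T}\!\!\int_{\partial\Omega}(\mathfrak{B}u)\,\Phi\,dS\,dt \;=\; \lambda_1\int_{Q_T}u\,\varphi^{*}\,dx\,dt,
\]
where $\Phi>0$ is the positive boundary weight assembled from $\varphi^{*}$ (the conormal of $\varphi^{*}$ on $\Gamma_0$ and $\varphi^{*}$ itself on $\Gamma_1$); the identity follows from the $T$-periodicity in $t$ together with the adjoint boundary conditions. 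With $\lambda_1=0$ the right-hand side vanishes, while each term on the left is non-negative for a strict supersolution and the strictness of $u$ forces at least one of them to be strictly positive, which is the required contradiction. This last step---where the $-\varphi$ trick is unavailable and one must instead invoke the Fredholm-type pairing with $\varphi^{*}$ together with the strict positivity of $\Phi$, leaning on the algebraic simplicity and strict dominance of $\lambda_1$ highlighted in the introduction and on the concrete structure of the mixed boundary operator $\mathfrak{B}$---is the genuine difficulty of the theorem.
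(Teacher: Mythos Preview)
Your implications $(a)\Rightarrow(b)$ and $(b)\Rightarrow(c)$ are correct and match the paper's reasoning, as does the $\lambda_1<0$ sub-case of $(c)\Rightarrow(a)$ via $-\varphi$.

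The $\lambda_1=0$ argument, however, has both a logical and a technical gap. Logically, what your adjoint pairing actually proves is that \emph{no} strict supersolution can exist when $\lambda_1=0$; but then condition~(c) becomes vacuously true rather than false, so you have not produced the desired contradiction with (c). To contradict (c) you would need to exhibit a strict supersolution that fails Alternative~(b), and your argument does the opposite. Technically, the formal adjoint $\mathcal{P}^*$ requires two derivatives of $a_{ij}$ and one of $b_j$, whereas the paper only assumes $a_{ij},b_j\in C^{\theta,\theta/2}$; moreover the boundary operator $\mathfrak{B}$ uses a general outward field $\nu$ (not the conormal) with a sign-changing $\beta$, so neither the Green identity you write down nor the strict positivity of your boundary weight $\Phi$ is available in this setting.

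The paper bypasses all of this by closing the cycle through $(b)\Rightarrow(a)$ instead of $(c)\Rightarrow(a)$: once a non-negative strict supersolution exists, Theorem~\ref{th11} (together with the Fredholm alternative) makes the resolvent $\mathcal{P}^{-1}:F\to F$ well defined, compact and strongly order preserving, and the Krein--Rutman theorem (Theorem~\ref{thKR}) then gives $\lambda_1[\mathcal{P},\mathfrak{B},Q_T]=1/\mathrm{spr}\,\mathcal{P}^{-1}>0$ directly. This is exactly the route sketched in the paragraph preceding the statement of Theorem~\ref{th12}, and it uses only the tools already built in Sections~3 and~6. If you insist on attacking $\lambda_1=0$ head-on, a far lighter device than the adjoint is available: given any strict supersolution $u$, the functions $u-t\varphi$ are again strict supersolutions for every $t>0$ (since $\mathcal{P}\varphi=0$, $\mathfrak{B}\varphi=0$), so (c) would force $u-t\varphi\gg 0$ for all $t$, which is impossible as $t\to\infty$. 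This still presupposes that a strict supersolution is at hand, which is precisely why the paper routes the argument through (b).
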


This theorem is an important periodic-parabolic counterpart of Theorem 2.4 of
H. Amann and J. L\'{o}pez-G\'{o}mez \cite{AL98}.
\par
The second, and main, goal of this paper is using Theorem \ref{th12} for sharpening   \cite[Th. 2.2]{ALG} up to cover the case of general cooperative systems of
periodic-parabolic type under arbitrary mixed boundary conditions for each of the underlying components. Our theorem provides us, as very special cases, with the elliptic counterparts of the main theorems of D. G. Figueiredo and E. Mittidieri \cite{FM}, G. Sweers \cite{Sw} and J. L\'opez-G\'omez and M. Molina-Meyer \cite{LM}, and should have a large number of
applications in the context of cooperative and quasi-cooperative periodic-parabolic systems.
\par
This paper is distributed as follows. Section 2 collects some classical results on the minimum principle for periodic-parabolic problems, which have been borrowed from the book of P. Hess \cite{Hess}, and derives  from them some extremely useful properties that will be used throughout this paper; in particular, the periodic-parabolic counterpart of the
generalized minimum principle of  M. H. Protter and H. F. Weinberger \cite{PW67}, which is one of the main findings
of this paper. Based on these results, Section 3 provides us with all the admissible behaviors of the supersolutions of $(\mathcal{P},\mathfrak{B},Q_T)$ in the presence of a positive supersolution bounded away from zero. Precisely, it shows the validity of Theorem \ref{th11} in the special case when $h(x,t)>0$ for all $(x,t)\in\bar Q_T$. Section 4 establishes an universal estimate for the  decaying rate  of the positive supersolutions of
$(\mathcal{P},\mathfrak{B},Q_T)$ along $\G_0$, which is a substantial extension of \cite[Th. 2.3]{LG13}. It is  necessary to complete the proof of Theorem \ref{th11} in Section 5 from the first classification theorem established in Section 3.  Section 6
sharpens substantially the main theorem of the authors in \cite{ALG}, by extending it to deal with arbitrary boundary conditions of
mixed type. Later, inspired on the  work of
S. Cano-Casanova and J. L\'opez-G\'omez \cite{CL-02}, Section 7 derives from the main theorem of Section 6 the most
fundamental properties of the principal eigenvalues of the cooperative  periodic-parabolic system introduced in Section 6.
Finally, in Section 8 the abstract theory of M. A. Krasnoselskij \cite{Kr} is invoked to characterize the principal eigenvalue of a cooperative periodic-parabolic system in a special case of great interest from the point of view of the applications.
\par
\vspace{0.2cm}
\noindent\textbf{On this paper:} This paper grew
from a preliminar version of Theorem \ref{th11} in \cite{LG14}. One and a half year after submission to the journal,
on July 18, 2016, R. Aftabizadeh knowledged the second author that he was going to ask for a technical report  to Juncheng Wei, who was the  handling editor of \cite{LG14}. There was not any technical report, nor any further news concerning that submission since then.
\par
In the mean time, the paper was completed by polishing, very substantially, the materials of Sections 2-5 and adding Sections 6 and 7. Then, the enlarged paper was submitted to J. Mallet-Paret for the \emph{Journal of Differential Equations} on July 19, 2016. After six months, on January 12, 2017,  W. M. Ni sent the authors a (positive) technical report. Three weeks later, on February 1, 2017, the authors sent back to the editorial  office of the JDE the revised manuscript following scrupulously the reviewer recommendations. Finally, the paper was rejected by W. M. Ni on May 2, 2017. It is the first time, and unique,  having more than 170 papers already published, that the second author deserves a rejection after sending back a revised version of a paper to the editorial office.
\par
Then, the paper was submitted to the \emph{Mathematische Annalen} on June 12, 2017. After less than two months, on August 1, 2017, the paper was rejected with an extremely bias report focusing all
the attention on the first 5 sections of the paper and forgetting about its contents for systems. According to
the handling editor, in this occasion Y. Giga, the reviewer (anonymous) had been suggested by H. Amann. Once Y. Giga realized that the technical report had been indeed bias, he proposed the authors to submit their paper to the \emph{Advances in Differential Equations}, where it was accepted on October 26, 2017, with a very positive technical report.
\par
Astonishingly, after four additional months, on February 22, 2018, three years and four months after \cite{LG14}
was submitted to the \emph{Differential and Integral Equations}, R. Aftabizadeh knowledged the second author that, since his university was not paying the subscription  to the \emph{Advances in Differential Equations}, he must pay 2200 USD for publishing the paper, and that \emph{with these difficulties, I must inform you that expected publication of your paper is at least two years after the date it was accepted}.

\setcounter{equation}{0}

\section{Classical Minimum principles}

\noindent In this section we collect some classical results on the minimum principle for periodic-parabolic problems that have been borrowed from the book of P. Hess \cite{Hess}. Then, we derive from them some extremely useful properties
that will be used throughout the rest of this paper. The next result is Proposition 13.1 of P. Hess \cite{Hess}. It is a very old result going back to L. Nirenberg \cite{Nirenberg}, which extends the classical minimum principle of E. Hopf \cite{Hopf}.

\begin{theorem}
\label{th2.1} Let  $G$ be a bounded domain of $\R^N\times\R$ and $\mf{L}(x,t)$ a uniformly elliptic operator in $G$
of the form
\begin{equation*}
  \mathfrak{L}:=\mathfrak{L}(x,t):= -\sum_{i,j=1}^N a_{ij}(x,t)\frac{\p^2}
  {\p x_i \p x_j}+\sum_{j=1}^N b_j(x,t) \frac{\p}{\p x_j}+c(x,t)
\end{equation*}
with $a_{ij}=a_{ji}, b_j, c \in \mc{C}^{\t,\frac{\t}{2}}(\bar G)$ and $c\geq 0$.
\par
Suppose a function $u \in \mc{C}^{2,1}(G)\cap\mc{C}(\bar G) $ satisfies
\begin{equation}
\label{ii.1}
   \p_t u(x,t) + \mathfrak{L}(x,t) u(x,t) \geq 0 \quad \hbox{for all} \quad (x,t)\in G,
\end{equation}
i.e., $u$ is super-harmonic for the parabolic operator $\mc{P}:=\p_t+\mf{L}(x,t)$,
and
\begin{equation*}
  m := \min_{\bar G} u\leq 0.
\end{equation*}
Assume that the minimum, $m$, is attained at an (interior) point $(x_0,t_0) \in G$. Then:
\begin{enumerate}
\item[{\rm (a)}]  $u=m$ in the (connected) component of
\[
  G(t_0):=\{(x,t)\in G\;:\;t=t_0\}
\]
containing $(x_0,t_0)$.

\item[{\rm (b)}] $u(x,t)=m$ if $(x,t)\in G$ can be connected with   $(x_0,t_0)$ by a path in $G$ consisting only of horizontal and upward vertical segments.
\end{enumerate}
\end{theorem}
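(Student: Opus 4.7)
The plan is to follow the classical Nirenberg strong minimum principle strategy, splitting the propagation of the minimum into a horizontal (spatial) step and an upward (temporal) step, each carried out by a Hopf-type barrier argument.

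As a preliminary reduction, set $\tilde u := u - m$, so that $\tilde u \geq 0$ on $\bar G$ and $\tilde u(x_0,t_0)=0$. The hypotheses $c \geq 0$ and $m \leq 0$ guarantee that
$$\mathcal{P}\tilde u \;=\; \mathcal{P}u - c\,m \;\geq\; -c\,m \;\geq\; 0,$$
so $\tilde u$ inherits superharmonicity for $\mathcal{P}$ and we may work with $\tilde u$ in place of $u-m$ throughout.

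For part (a), let $\Omega_{t_0}$ denote the spatial slice of the component of $G(t_0)$ containing $(x_0,t_0)$, and define $Z := \{x \in \Omega_{t_0} : \tilde u(x,t_0) = 0\}$. By continuity $Z$ is closed and nonempty in $\Omega_{t_0}$, so it suffices to show it is open. If not, a standard separation argument produces a ball $B_R(z) \subset \Omega_{t_0}\setminus Z$ with $\tilde u(\cdot,t_0) > 0$ on $B_R(z)$ and a point $y \in \partial B_R(z) \cap Z$. Choose $\delta > 0$ so that $\overline{B_R(z)}\times[t_0-\delta,t_0]\subset G$, and on the sub-annular parabolic box build a Hopf barrier of the form
$$w(x,t) \;:=\; e^{-\alpha(|x-z|^2+\gamma(t_0-t))} - e^{-\alpha R^2},$$
tuning $\alpha$ large and $\gamma > 0$ so that $\mathcal{P}w \leq 0$ (absorbing $b_j$ and the nonnegative $c$) while $w > 0$ inside. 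The weak parabolic minimum principle, applied on the annulus-cross-time strip and using that $\tilde u \geq \eta > 0$ on a compact inner surface, yields $\tilde u \geq \varepsilon w$ there. Evaluating the outward normal derivative at $(y,t_0)$ forces $\partial_\nu \tilde u(y,t_0) < 0$, contradicting $\nabla_x \tilde u(y,t_0)=0$, which must hold because $y$ is an interior minimizer of $\tilde u(\cdot,t_0)$.

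For part (b), fix any vertical segment $\{x_0\}\times[t_0,t_1]\subset G$ and a tube $\overline{B_r(x_0)}\times[t_0,t_1]\subset G$, and set
$$t^* \;:=\; \sup\bigl\{t\in[t_0,t_1] : \tilde u(x_0,s) = 0 \text{ for all } s\in[t_0,t]\bigr\}.$$
By continuity $\tilde u(x_0,t^*)=0$. If $t^* < t_1$, apply (a) at the slice $t=t^*$ to deduce $\tilde u(\cdot,t^*)\equiv 0$ on $B_r(x_0)$. Then on the cylinder $B_r(x_0)\times[t^*,t^*+\varepsilon]$ test $\tilde u$ against a cylindrical parabolic barrier of the form $\phi(x,t) = (t-t^*)\,\psi(x)$, where $\psi \in C^2_0(B_r(x_0))$ is a positive bump vanishing on $\partial B_r(x_0)$; for $\varepsilon$ small enough $\mathcal{P}\phi \leq 0$, and comparison gives $\tilde u(x_0,t) = 0$ on $[t^*,t^*+\varepsilon]$, contradicting the definition of $t^*$. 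Thus $t^*=t_1$. Alternating the horizontal conclusion of (a) with this upward step along the piecewise path yields (b).

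The principal obstacle is the calibration of the barrier $w$ in the parabolic setting with nontrivial lower-order terms: the coefficients $b_j$ and $c$ must be absorbed uniformly on the chosen space-time box while preserving the sign of $\mathcal{P}w$ and the strict negativity of $\partial_\nu w$ at the contact point. Once the barrier is constructed and the weak parabolic minimum principle (for operators with $c \geq 0$) is invoked in the standard way, both propagation statements reduce to the same comparison-plus-Hopf template applied to different sub-cylinders.
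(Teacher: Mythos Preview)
The paper does not prove Theorem~2.1; it is quoted as Proposition~13.1 of Hess \cite{Hess}, attributed to Nirenberg, and used as a black box throughout Section~2. So your attempt must be judged on its own.

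Your treatment of part~(a) follows the standard Nirenberg--Hopf barrier template and is essentially sound. One point needs care: you assert $\tilde u\ge\eta>0$ on the inner lateral surface $\partial B_{R/2}(z)\times[t_0-\delta,t_0]$, but a~priori you only know positivity on the slice $t=t_0$. This is repaired by continuity in $t$ after shrinking $\delta$, then choosing $\gamma$ (so that $w\le 0$ on the bottom face) and finally $\alpha$ large; the order of these choices matters and should be made explicit.

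Part~(b), however, has the time direction reversed, and this is a genuine gap. For a supersolution of $\mathcal P=\partial_t+\mathfrak L$, an interior nonpositive minimum propagates \emph{backward} in time, not forward: the assertion is that $u(x,t)=m$ at those $(x,t)$ from which one can reach $(x_0,t_0)$ by moving \emph{upward} in $t$, hence at points with $t\le t_0$. This is precisely how the paper uses the theorem in Corollary~2.1, deducing $u\equiv m$ for $0\le t\le t_0$. Forward propagation simply fails: take $N=0$, $c\equiv 0$, and $u(t)=e^{-1/(t-t_0)^2}$ for $t>t_0$, $u\equiv 0$ for $t\le t_0$; then $u'\ge 0$, the minimum $m=0$ is attained at every $t\le t_0$, yet $u>0$ for $t>t_0$. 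Your barrier also has the wrong sign: with $\phi(x,t)=(t-t^*)\psi(x)$ one gets $\mathcal P\phi=\psi+(t-t^*)\mathfrak L\psi$, which equals $\psi>0$ at $t=t^*$, so $\mathcal P\phi\le 0$ cannot hold near $t^*$. The correct vertical step argues by contradiction in the opposite direction: if $\tilde u(x_1,t_1)>0$ at some earlier time $t_1<t_0$, one runs a forward-in-time comparison on a thin cylinder $B_\rho(x_1)\times[t_1,t_0]$ against a subsolution barrier to force $\tilde u(x_1,t_0)>0$, contradicting part~(a).
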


\begin{figure}[h!]
\centerline{\includegraphics[scale=1]{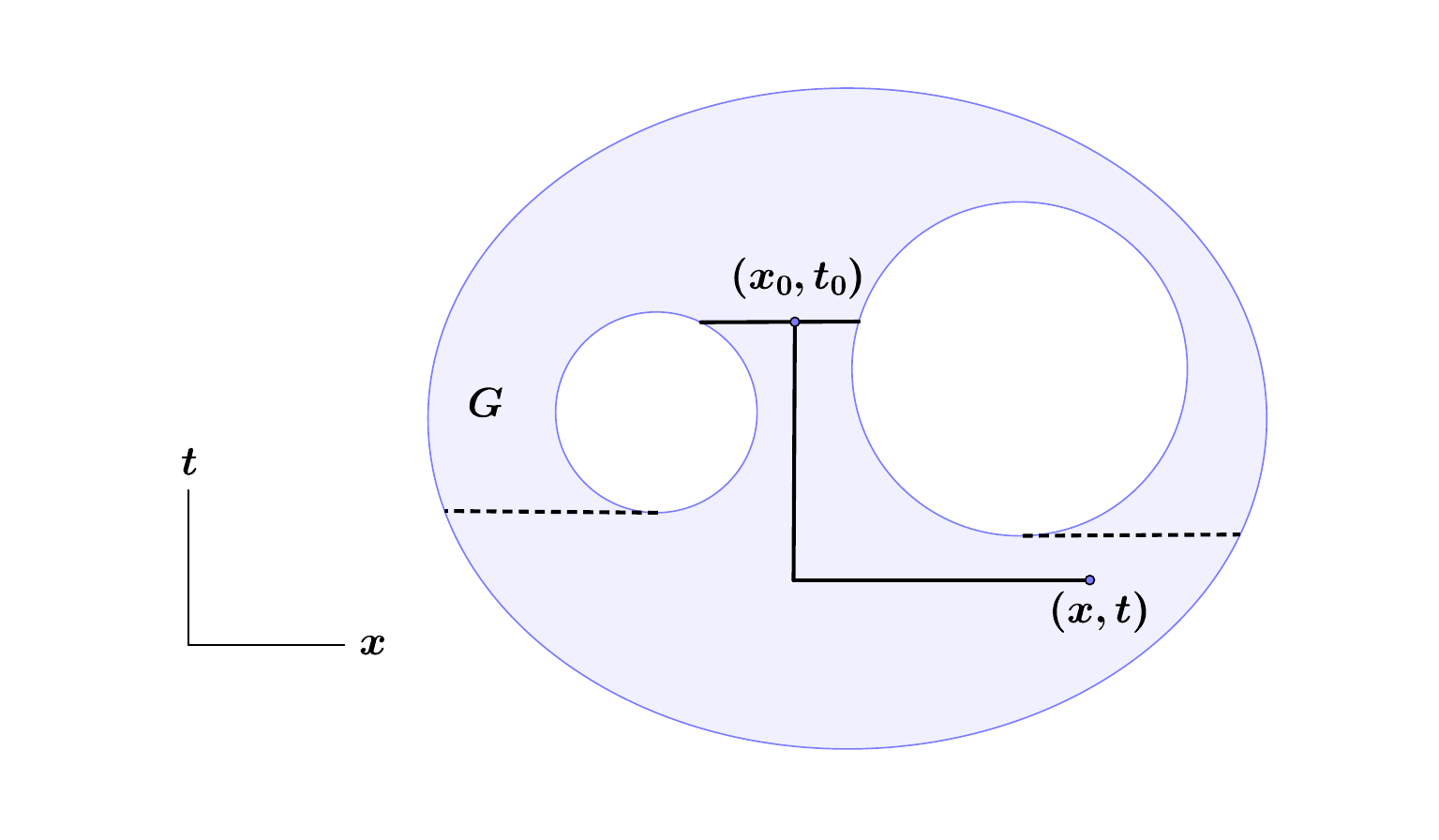}}
\caption{Construction of the portion of $G$ where $u\equiv m$.}
\label{ALG-Fig1}
\end{figure}

As a by-product of Theorem \ref{th2.1}, using $G= \O \times (0,T)$, the next result holds.

\begin{corollary}
\label{co2.1} Suppose $\O$ and $\mf{L}$ satisfy Assumptions {\rm (A1)} and {\rm (A2)} and, in addition, $c(x,t)\geq 0$ for all
$(x,t)\in\O \times \R$. Let $u \in E$ be a function such that
\begin{equation*}
  \mathcal{P} u \equiv \p_t u + \mf{L}u \geq 0  \quad \hbox{in}\;\; Q_T=\O \times [0,T]
\end{equation*}
and  $m=\min_{\bar Q_T} u \leq 0$. Then, $m$ cannot be reached in $\O\times (0,T)$, unless $u\equiv m$ in $\bar \O\times \R$.
\end{corollary}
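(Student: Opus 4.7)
The plan is to apply Theorem \ref{th2.1} twice, first on the natural cylinder $Q_T$ and then on a time-shifted cylinder, using $T$-periodicity as the bridge that converts the (one-directional) forward propagation of the minimum principle into propagation everywhere in $\bar\O\times\R$.

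First I would assume, for contradiction with the conclusion $u\equiv m$, that the minimum is attained at some interior point $(x_0,t_0)\in \O\times(0,T)$. Since $c(x,t)\geq 0$, $u$ is super-harmonic for $\mc{P}=\p_t+\mf{L}$, and $m\leq 0$, Theorem \ref{th2.1} applies with $G=\O\times(0,T)$. By part (a), $u\equiv m$ on the connected component of $G(t_0)=\O\times\{t_0\}$ containing $(x_0,t_0)$; since $\O$ is connected (Assumption (A1)), this gives $u(\cdot,t_0)\equiv m$ on $\O$. By part (b), every point of $\O\times[t_0,T)$ is reachable from $(x_0,t_0)$ by an upward vertical segment, so $u\equiv m$ on $\O\times[t_0,T)$, and by continuity (recall $u\in E$), $u\equiv m$ on $\bar\O\times[t_0,T]$.

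Now the $T$-periodicity enters: from $u(\cdot,T)\equiv m$ and $u(\cdot,0)=u(\cdot,T)$, I get $u(\cdot,0)\equiv m$ on $\bar\O$. To propagate this to the remaining times $(0,t_0)$, I reapply Theorem \ref{th2.1}, now on the shifted cylinder $G'=\O\times(-\d,T)$ for some small $\d>0$. The function $u$ and the coefficients $a_{ij},b_j,c$ all extend to $\bar\O\times\R$ as $T$-periodic functions (by Assumption (A2) and the definition of $E$), so $\mc{P}u\geq 0$ and uniform ellipticity still hold on $G'$. The point $(x_0,0)$ now lies in the interior of $G'$, and $u(x_0,0)=m$, so Theorem \ref{th2.1}(b) yields $u\equiv m$ on $\O\times[0,T)$, hence on $\bar\O\times[0,T]$ by continuity, and finally on $\bar\O\times\R$ by periodicity.

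The main obstacle is precisely that Theorem \ref{th2.1} is inherently one-sided in time (propagation only upward along vertical segments), so a single application cannot reach times earlier than $t_0$. The key trick is to exploit the periodic structure to identify the "boundary" value $u(\cdot,T)$ with an \emph{interior} value $u(\cdot,0)$ of a shifted cylinder, after which a second application of the theorem closes the argument.
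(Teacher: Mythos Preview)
Your overall strategy---apply Theorem \ref{th2.1}, use $T$-periodicity to transfer information across the time boundary, then apply Theorem \ref{th2.1} again---is exactly the paper's approach. However, you have the direction of propagation in Theorem \ref{th2.1}(b) reversed, and this is a genuine error.

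For a supersolution ($\mc{P}u\geq 0$) the parabolic strong minimum principle propagates the minimum \emph{backward} in time: the statement ``$(x,t)$ can be connected with $(x_0,t_0)$ by horizontal and upward vertical segments'' means the path climbs from $(x,t)$ up to $(x_0,t_0)$, so $t\leq t_0$. This is how the paper itself uses the result (it concludes $u(x,t)=m$ for $0\leq t\leq t_0$, not for $t_0\leq t\leq T$), and it is the content of Nirenberg's theorem. Your first application therefore gives $u\equiv m$ on $\bar\O\times[0,t_0]$, not on $\bar\O\times[t_0,T]$. Consequently your second step is also inverted: from $u(\cdot,0)\equiv m$ and periodicity you get $u(\cdot,T)\equiv m$, and then a second application of Theorem \ref{th2.1} (on, say, $G'=\O\times(0,T+\d)$, where $(x_0,T)$ is now interior) propagates backward to cover $(0,T]$.

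Once the direction is corrected, your argument coincides with the paper's proof. The ``shifted cylinder'' device you describe is a clean way to make rigorous the paper's phrase ``applying again Theorem \ref{th2.1}'', just with the shift to $(0,T+\d)$ rather than $(-\d,T)$.
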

\begin{proof}
Suppose there is $(x_0,t_0)\in \O \times (0,T)$ such that
\[
  u(x_0,t_0)=m\leq 0.
\]
Then, by Theorem \ref{th2.1} applied in $G:=\O\times (0,T)$, it follows that
$u(x,t)=m$ for $x\in \bar \O$ and $0\leq t \leq t_0$, because
$G(t_0)=\O$ and we are assuming $\O$ to be connected. Thus, since $u$ is $T$-periodic,
\[
  u(x,T)=u(x,0)=m
\]
for all $x \in \bar\O$. Therefore, applying again Theorem \ref{th2.1} yields $u \equiv m$ in $\bar \O\times [0,T]$. By
the time periodicity, $u\equiv m$ in $\bar\O \times \R$.
\end{proof}

The next result is the parabolic counterpart of the Hopf--Oleinik boundary lemma (see, e.g., Theorem 1.3 of \cite{LG13}). It has been
borrowed from Proposition 13.3 of P. Hess \cite{Hess}. The parabolic counterpart is attributable to L. Nirenberg \cite{Nirenberg}.

\begin{theorem}
\label{th2.2} Let $G$ and $\mf{L}$ be as in Theorem \ref{th2.1}. Suppose $u\in \mathcal{C}^{2,1}(G)\cap\mathcal{C}(\bar G)$ satisfies $\mc{P}u\geq 0$ in $G$ and there exists $P=(x_0,t_0)\in \partial G$ such that
\begin{equation*}
  u(x_0,t_0)=m := \min_{\bar G} u \leq 0.
\end{equation*}
Set
\begin{equation*}
  G_{t_0}:= \left\{(x,t)\in G \; : \;\; t \leq t_0\right\}
\end{equation*}
and assume that there exists an open ball  $B$, tangential to $\partial G$ at $P$,  such that
\begin{equation*}
  B_{t_0}:= \left\{(x,t)\in B\;  : \;\; t \leq t_0\right\}\subset G_{t_0}
\end{equation*}
and
\[
  u > m \quad \hbox{in}\;\; G_{t_0}.
\]
Further assume that the radial direction from the center of $B$ to $P$  is not parallel to the $t$-axis. Let $\nu$ be a direction in
$P$ pointing outward of $G_{t_0}$, having nonnegative $t$-component such that $-\nu$ points in $B_{t_0}$. Then,
\[
   \frac{\p u}{\p \nu}(x_0,t_0)<0.
\]
\end{theorem}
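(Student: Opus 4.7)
The plan is Hopf's classical barrier argument adapted to the parabolic setting. Let $(x^*,t^*)$ and $R$ be the center and radius of the tangent ball $B$, so that $|x_0-x^*|^2+(t_0-t^*)^2=R^2$; the hypothesis that the radial direction at $P$ is not parallel to the $t$-axis forces $d:=|x_0-x^*|>0$, which is precisely what makes the barrier computation work. I would introduce the auxiliary function
\[
w(x,t):=e^{-\alpha\rho(x,t)^2}-e^{-\alpha R^2},\qquad \rho(x,t)^2:=|x-x^*|^2+(t-t^*)^2,
\]
which is smooth, positive inside $B$, and vanishes on $\partial B$. The comparison will be done on the set $A:=B_{t_0}\cap B_\delta(P)$, where $\delta\in(0,d/2)$ is chosen so small that $|x-x^*|\geq d/2$ on $\bar A$ and $\bar A\subset \bar G_{t_0}$.

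A direct differentiation yields
\[
\mathcal{P}w=e^{-\alpha\rho^2}\Bigl\{-4\alpha^2\sum_{i,j}a_{ij}(x_i-x_i^*)(x_j-x_j^*)+2\alpha\sum_i a_{ii}-2\alpha\sum_j b_j(x_j-x_j^*)-2\alpha(t-t^*)\Bigr\}+c(x,t)\,w.
\]
By uniform ellipticity and $|x-x^*|\geq d/2$ on $\bar A$ the leading term is bounded above by $-\alpha^2\mu d^2 e^{-\alpha\rho^2}$, the remaining bracket terms are $O(\alpha)$, and $0\leq cw\leq \|c\|_\infty$; hence for $\alpha$ sufficiently large, $\mathcal{P}w\leq 0$ throughout $\bar A$. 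Setting $v:=u-m-\varepsilon w$, the assumptions $\mathcal{P}u\geq 0$, $c\geq 0$ and $m\leq 0$ give $\mathcal{P}(u-m)=\mathcal{P}u-cm\geq 0$, whence $\mathcal{P}v\geq 0$ in $A$. To apply Theorem \ref{th2.1} I verify $v\geq 0$ on the boundary of $A$: on the piece contained in $\partial B$ one has $w=0$ and $u\geq m$, so $v\geq 0$; on the piece contained in $\bar B_{t_0}\cap\partial B_\delta(P)$, compactness together with the strict inequality $u>m$ on $G_{t_0}$ supplies $\eta>0$ with $u-m\geq\eta$, so taking $\varepsilon\leq\eta$ yields $v\geq 0$; the horizontal top slice at $t=t_0$ is then handled by Theorem \ref{th2.1}, which rules out a negative interior minimum without forcing $v$ to be constant along a future-connected horizontal slice incompatible with the already-established boundary values. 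Hence $v\geq 0$ on $\bar A$.

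Finally, since $v(P)=0$ attains the minimum of $v$ on $\bar A$ and $-\nu$ points into $A$ by hypothesis, the one-sided derivative satisfies $\partial v/\partial(-\nu)(P)\geq 0$, which gives
\[
\frac{\partial u}{\partial(-\nu)}(P)\geq \varepsilon\,\frac{\partial w}{\partial(-\nu)}(P)=\varepsilon\cdot 2\alpha R\,e^{-\alpha R^2}(\nu\cdot n),\qquad n:=\frac{(x_0-x^*,\,t_0-t^*)}{R}.
\]
The geometric hypotheses ($-\nu$ enters $B_{t_0}$, $\nu$ has nonnegative $t$-component, and $n$ is not parallel to the $t$-axis) force $\nu\cdot n>0$, and therefore $\partial u/\partial\nu(P)<0$, as claimed. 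The main obstacle I foresee is the rigorous propagation of the boundary bound on $v$ to the horizontal top slice of $A$; it requires a careful combined use of Theorem \ref{th2.1} with the lateral and inner-sphere bounds on $\partial A$, and possibly a judicious reshaping of $A$ to make that top slice amenable to the strong minimum principle.
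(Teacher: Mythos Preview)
The paper does not prove Theorem \ref{th2.2}; it is quoted verbatim from Proposition 13.3 of Hess \cite{Hess} as a classical result going back to Nirenberg \cite{Nirenberg}, so there is no in-paper argument to compare against. Your outline is exactly the classical Hopf barrier proof and is correct.

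The concern you flag about the horizontal top slice $\{t=t_0\}$ is legitimate but routinely handled. The clean fix is a strict-inequality perturbation: replace $v$ by $v_\epsilon:=v+\epsilon e^{\lambda t}$ with $\lambda>\|c\|_\infty$, so that $\mathcal{P}v_\epsilon>0$ strictly on $A$. At a would-be nonpositive minimum of $v_\epsilon$ on the open top slice one has $\nabla_x v_\epsilon=0$, $D_x^2 v_\epsilon\geq 0$, and $\partial_t v_\epsilon\leq 0$ (one-sided, using that $v_\epsilon$ is $C^{2,1}$ in a full neighborhood because that point lies in the open set $G$), whence $\mathcal{P}v_\epsilon\leq c\,v_\epsilon\leq 0$, a contradiction. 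Interior minima are excluded by Theorem \ref{th2.1} as you indicate, and on the remaining boundary pieces $v_\epsilon\geq 0$. Letting $\epsilon\downarrow 0$ gives $v\geq 0$ on $\bar A$, and your derivative computation at $P$ then goes through unchanged.

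One further small point: in your compactness step on $\bar B_{t_0}\cap\partial B_\delta(P)$, the portion lying on $\partial B$ already has $w=0$, so $v\geq 0$ there trivially; the remaining portion lies in $B_{t_0}\subset G_{t_0}$, where $u>m$ by hypothesis, but is not closed. Either observe that its closure only adds points of $\partial B$ (already handled), or use the classical annular comparison region $\{\,R'<\rho<R,\ t\leq t_0\,\}$ with $R'$ close to $R$, whose inner boundary is compact and contained in $B_{t_0}\subset G_{t_0}$; this also guarantees $|x-x^*|\geq d/2$ automatically once $R-R'$ is small.
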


\begin{figure}[h!]
\centerline{\includegraphics[scale=1]{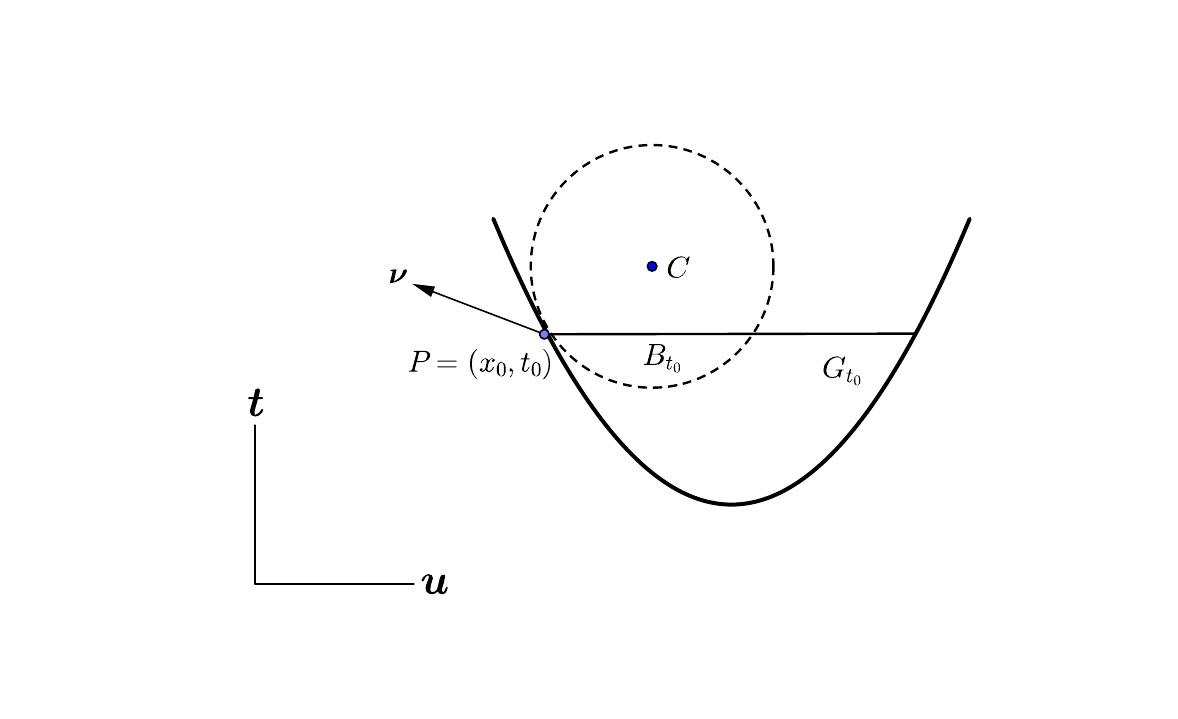}}
\caption{The parabolic boundary lemma.}
\label{ALG-Fig2}
\end{figure}

\begin{corollary}
\label{co2.2} Under the conditions of Corollary \ref{co2.1}, suppose $u$ is non-constant ($u>m$ in $\O \times \R$) and let
$(x_0,t_0)\in \p\O \times \R$ such that $u(x_0,t_0)=m$. Then, $\frac{\p u}{\p \nu}(x_0,t_0)<0$ for all outward pointing vector
field of $\O \times \R$ having nonnegative $t$-component.
\end{corollary}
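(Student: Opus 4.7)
The plan is to apply Theorem \ref{th2.2} on a bounded subdomain of $\O\times\R$ containing the point $P:=(x_0,t_0)$, after constructing an interior tangent ball at $x_0$ in $\O$ and lifting it to $\R^{N+1}$.

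First I would upgrade the hypothesis using Corollary \ref{co2.1}: because $u$ is non-constant, $T$-periodic and $\mc{P}u\geq 0$ with $m\leq 0$, the minimum $m$ cannot be attained at any point of $\O\times\R$, hence $u>m$ on $\O\times\R$. Fixing $\e>0$ small, I would then work on the bounded cylinder $G:=\O\times(t_0-T,t_0+\e)$; one has $G_{t_0}=\O\times(t_0-T,t_0]$, whose spatial coordinates lie in the open set $\O$, so $u>m$ on $G_{t_0}$, while $P\in\p G$ still satisfies $u(P)=m$.

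Next I would construct the tangent ball. Since $\p\O$ is of class $\mc{C}^{2+\t}$, there exists an interior open ball $B_\O\subset\O$ of radius $r>0$ (shrinking, we may take $r<T$) centered at $x_c=x_0-r\,n(x_0)$ and internally tangent to $\p\O$ at $x_0$, where $n(x_0)$ is the unit outer normal to $\p\O$ at $x_0$. I would define $B\subset\R^{N+1}$ as the open ball of radius $r$ centered at $(x_c,t_0)$. Its radial vector to $P$, namely $(r\,n(x_0),0)$, is purely spatial and hence not parallel to the $t$-axis; moreover
\[
B_{t_0}\subset B_\O\times(t_0-r,t_0]\subset G_{t_0},
\]
so $B$ is tangent to $\p G$ at $P$ with the geometric configuration required by Theorem \ref{th2.2}.

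Finally, for any admissible vector $\nu=(v,s)$ at $P$, i.e.\ with $v\cdot n(x_0)>0$ and $s\geq 0$, I would verify that $-\nu$ points into $B_{t_0}$. The identity
\[
\bigl|P-\l\nu-(x_c,t_0)\bigr|^2=r^2-2\l r\,v\cdot n(x_0)+\l^2(|v|^2+s^2)
\]
shows the left-hand side is strictly less than $r^2$ for small $\l>0$, while $t_0-\l s\leq t_0$, so $P-\l\nu\in B_{t_0}$. All hypotheses of Theorem \ref{th2.2} being in place, the conclusion $\p u/\p\nu(x_0,t_0)<0$ follows. The only real obstacle is the bookkeeping needed to pass from the unbounded domain $\O\times\R$ to a bounded $G$ where Theorem \ref{th2.2} is applicable, while keeping $P$ on its lateral boundary and $u>m$ throughout $G_{t_0}$; this is handled by the time-periodicity of $u$ and by shrinking $r$, and no analytic tool beyond Corollary \ref{co2.1} and Theorem \ref{th2.2} is required.
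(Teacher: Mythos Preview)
Your proof is correct and follows essentially the same route as the paper's own proof, which simply invokes the interior sphere property guaranteed by (A1) and says the conclusion is a direct consequence of Corollary \ref{co2.1} and Theorem \ref{th2.2}. You have merely made explicit the construction of the tangent ball and the verification of the hypotheses of Theorem \ref{th2.2} that the paper leaves to the reader; the only cosmetic point is that you should also take $r<\e$ (not just $r<T$) so that the full ball $B$ sits inside $G$ and is genuinely tangential to $\partial G$ at $P$, but this is harmless.
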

\begin{proof}
By assumption (A1), $\O$ is sufficiently smooth as to guarantee the existence of an interior sphere property for the parabolic
cylinder. The rest of the proof is a direct consequence from Corollary \ref{co2.1} and Theorem \ref{th2.2}.
\end{proof}

In the more general case when $c$ can take negative values, one can apply the next periodic-parabolic counterpart of Theorem 10 of
Chapter 2 of Protter and Weinberger \cite{PW67} (see Theorem 1.7 of \cite{LG13}). In this result, instead of imposing $c\geq 0$,
as in the statement of Theorem \ref{th2.1}, we are assuming the existence of a super-harmonic function, $h$, everywhere positive.

\begin{theorem}
\label{th2.3}
 Suppose $\O$ and $\mf{L}$ satisfy Assumptions {\rm (A1)} and {\rm (A2)} and there exists $h \in E$, with
 $h(x,t)>0$ for all $(x,t)\in \bar Q_T$, such that
\begin{equation}
\label{ii.2}
  \mathcal{P} h \equiv \p_t h + \mf{L}h \geq 0  \quad \hbox{in}\;\; Q_T=\O \times [0,T].
\end{equation}
Then, for every $u \in E$ such that $\min_{\bar Q_T}u \leq 0$ and $\mc{P}u\geq 0$ in $Q_T$, the following holds
\begin{equation}
\label{ii.3}
  \frac{u(x,t)}{h(x,t)}> \min_{\bar Q_T}\frac{u}{h},\qquad (x,t)\in Q_T,
\end{equation}
unless $u/h$ is constant in $Q_T$.
\end{theorem}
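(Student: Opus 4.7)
The plan is to reduce Theorem 2.3 to the special case already handled by Corollary 2.1 (where the zeroth-order coefficient is non-negative) by means of the substitution $v:=u/h$. Since $h\in E$ with $h>0$ on the compact set $\bar Q_T$, there exists $\delta>0$ with $h\geq \delta$, so $v\in E$; moreover $\min_{\bar Q_T} v\leq 0$, because $h>0$ and $\min_{\bar Q_T} u\leq 0$. I want to show that $v$ satisfies a periodic-parabolic inequality of the form $\tilde{\mathcal{P}}v\geq 0$ where $\tilde{\mathcal{P}}=\p_t+\tilde{\mathcal{L}}$ and the zeroth-order coefficient of $\tilde{\mathcal{L}}$ turns out to be precisely $\mathcal{P}h/h\geq 0$. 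Once this is established, applying Corollary \ref{co2.1} to $w:=v-m$ (with $m:=\min_{\bar Q_T}v$) with the new operator $\tilde{\mathcal{P}}$ closes the proof.

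The first step is the direct but slightly tedious computation. Writing $u=hv$, using $a_{ij}=a_{ji}$ and regrouping terms gives
\[
\mathcal{P}(hv)= v\,\mathcal{P}h + h\,\mathcal{P}v - h c v - 2\sum_{i,j=1}^N a_{ij}\,(\p_i h)(\p_j v).
\]
Dividing by $h>0$ and collecting, one finds
\[
\frac{\mathcal{P}u}{h} = \p_t v + \tilde{\mathcal{L}} v,\qquad \tilde{\mathcal{L}} v := -\sum_{i,j=1}^N a_{ij}\,\p_{ij} v+\sum_{j=1}^N \tilde{b}_j\,\p_j v + \tilde{c}\, v,
\]
with $\tilde{b}_j := b_j - \tfrac{2}{h}\sum_i a_{ij}\,\p_i h$ and, crucially,
\[
\tilde{c} := \frac{\mathcal{P}h}{h}\,\geq\,0.
\]
Because $h\in E\subset\mathcal{C}^{2+\t,1+\t/2}$, $h\geq \delta>0$, and the $a_{ij}, b_j$ lie in $F$, the coefficients $\tilde{b}_j,\tilde{c}$ are again H\"{o}lder-continuous and $T$-periodic, so the modified operator $\tilde{\mathcal{P}}$ satisfies Assumption (A2); the ellipticity constants are unchanged, and $\tilde{c}\geq 0$ is exactly the hypothesis required in Corollary \ref{co2.1}.

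The second step is to put $w:=v-m\in E$, which is non-negative with $\min_{\bar Q_T} w=0\leq 0$. A one-line calculation gives
\[
\tilde{\mathcal{P}} w = \tilde{\mathcal{P}} v -\tilde{c}\, m = \frac{\mathcal{P}u}{h} - \tilde{c}\,m \,\geq\, 0,
\]
where we used $\mathcal{P}u\geq 0$, $h>0$, $\tilde{c}\geq 0$ and $m\leq 0$. Now Corollary \ref{co2.1} applied to $w$ with respect to $\tilde{\mathcal{P}}$ yields the following dichotomy: either $w>0$ throughout $Q_T$, equivalently $u/h>m$ in $Q_T$, which is \eqref{ii.3}; or the minimum $0$ is attained at some interior point, in which case $w\equiv 0$ on $\bar\O\times\R$, i.e.\ $u/h$ is constant. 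This is precisely the conclusion of Theorem 2.3.

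The only real obstacle in this scheme is the algebraic bookkeeping in deriving the formula for $\tilde{\mathcal{P}}$ and verifying that the new coefficients preserve the regularity and periodicity required by Assumption (A2); the conceptual content is the cancellation of the original $cv$ against itself, leaving the \emph{non-negative} coefficient $\tilde c=\mathcal{P}h/h$ in the new zeroth-order term. This sign is exactly what one needs to invoke Corollary \ref{co2.1}, and it is dictated by the hypothesis $\mathcal{P}h\geq 0$, in line with the classical Protter--Weinberger device.
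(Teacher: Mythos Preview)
Your proof is correct and follows essentially the same approach as the paper: the substitution $v=u/h$, the computation showing that $\mathcal{P}u=h\,\mathcal{P}_h v$ for a new periodic-parabolic operator $\mathcal{P}_h$ whose zeroth-order coefficient is $c_h=\mathcal{P}h/h\geq 0$, and then the appeal to Corollary~\ref{co2.1}. The only cosmetic difference is that the paper applies Corollary~\ref{co2.1} directly to $v$ (since that corollary already allows an arbitrary non-positive minimum $m$), whereas you first shift to $w=v-m$; this extra step is harmless but unnecessary.
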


When $c\geq 0$, one may choose $h\equiv 1$, because $\mc{P}1 = c(x,t)\geq 0$. Thus, Theorem \ref{th2.1} holds from Theorem \ref{th2.3}.

\begin{proof}
Set
\begin{equation*}
  v:= \frac{u}{h}\in E.
\end{equation*}
Then, differentiating and rearranging terms yields
\begin{equation*}
  \mathcal{P} u  = \mathcal{P}(hv)= \p_t(hv)+\mathfrak{L}(hv) =
  h \p_t v + v \p_t h + h \mathfrak{M}_h v,
\end{equation*}
where
\begin{equation*}
   \mathfrak{M}_h := -\sum_{i,j=1}^N a_{ij}  \frac{\p^2 }{\p x_i \p x_j}+
   \sum_{j=1}^N \left( b_j-\frac{2}{h}\sum_{i=1}^N a_{ij}
   \frac{\p h}{\p x_i}\right) \frac{\p }{\p x_j}
   +\frac{\mathfrak{L}h}{h}.
\end{equation*}
Since $h\in E$ and $h(x,t)>0$ for all $(x,t)\in\bar Q_T$, it becomes apparent that
\begin{equation*}
  \frac{\mathfrak{L}h}{h}\in F, \qquad   b_{j,h} := b_j-\frac{2}{h}\sum_{i=1}^N a_{ij}   \frac{\p h}{\p x_i}
  \in F,\qquad 1 \leq j \leq N.
\end{equation*}
So, $\mathfrak{M}_h$ satisfies the same requirements as $\mathfrak{L}$. Thus, setting
\begin{equation}
\label{ii.4}
   \mathfrak{L}_h := -\sum_{i,j=1}^N a_{ij}  \frac{\p^2 }{\p x_i \p x_j}+
   \sum_{j=1}^N b_{j,h} \frac{\p }{\p x_j}    +\frac{\p_t h + \mathfrak{L} h}{h}=\mf{M}_h+\frac{\p_t h}{h},
\end{equation}
and rearranging terms, we find that
\begin{equation*}
  \mathcal{P}u = h \left( \p_t + \mathfrak{L}_h \right)v.
\end{equation*}
Since, by \eqref{ii.2},
\begin{equation}
\label{ii.5}
   c_h:=\frac{\p_t h + \mathfrak{L}h}{h} = \frac{\mathcal{P}h}{h} \geq 0
\end{equation}
and $c_h \in F$, it becomes apparent that the periodic-parabolic operator
\begin{equation}
\label{ii.6}
  \mathcal{P}_h :=\p_t + \mathfrak{L}_h
\end{equation}
satisfies  the requirements to apply Theorem \ref{th2.1} to the function $v$, since $\mc{P}u\geq 0$ implies $\mc{P}_hv\geq 0$. Note that $\min_{\bar Q_T}u\leq 0$ implies $\min_{\bar Q_T}v\leq 0$.
\end{proof}

\section{The first classification theorem}

\noindent The following result provides us with all the admissible
behaviors of the supersolutions of $(\mathcal{P},\mathfrak{B},Q_T)$ in the presence of a positive supersolution bounded away from zero.
It is a periodic-parabolic counterpart of Theorem 1 of W. Walter \cite{Wa89} and the first classification theorems of
J. L\'opez-G\'omez \cite{LG03,LG13}.

\begin{theorem}
\label{th3.1} Suppose $(\mathcal{P},\mathfrak{B},Q_T)$ admits a supersolution $h \in E$ such that
\begin{equation}
\label{3.1}
  h(x,t)>0 \qquad \hbox{for all}\quad (x,t)\in \bar Q_T.
\end{equation}
Then, any supersolution $u\in E$  of $(\mathcal{P},\mathfrak{B},Q_T)$ satisfies one of the following alternatives:
\begin{enumerate}
\item[{\rm (a)}] $u =  0$ in $Q_T$.

\item[{\rm (b)}] $u(\cdot,t)\gg 0$ for all $t \in [0,T]$, in the sense that $u(x,t)>0$ for all $x\in\O\cup\G_1$,  $t \in [0,T]$,  and
\begin{equation*}
  \frac{\p u}{\p \nu}(x,t)<0\quad \hbox{for all}\;\; (x,t) \in [u^{-1}(0)\cap \G_0]\times [0,T].
\end{equation*}

\item[{\rm (c)}] There exists $m<0$ such that
$u = m h$ in $Q_T$. In such case, $\G_0=\emptyset$ and
\begin{equation}
\label{3.2}
  \left\{ \begin{array}{ll} \mathcal{P}h =0 &\quad   \hbox{in}\;\;Q_T, \\  \mathfrak{B}h = 0
  &\quad\hbox{on}\;\;\p\O\times [0,T].\end{array}\right.
\end{equation}
\end{enumerate}
\end{theorem}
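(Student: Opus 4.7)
The plan is to analyze the quotient $v := u/h \in E$, following the conjugation trick carried out in the proof of Theorem \ref{th2.3}. That computation shows $\mathcal{P}_h v \geq 0$ in $Q_T$ for the periodic-parabolic operator $\mathcal{P}_h = \partial_t + \mathfrak{L}_h$ of \eqref{ii.6}, whose zero-order coefficient $c_h = \mathcal{P}h/h$ is non-negative. An elementary product-rule computation transforms the boundary operator into
\[
  \mathfrak{B}u = h\,\frac{\partial v}{\partial \nu} + v\,\mathfrak{B}h \quad \text{on } \Gamma_1, \qquad \mathfrak{B}u = h\, v \quad \text{on } \Gamma_0,
\]
so the supersolution condition on $u$ becomes a mixed sign condition on $v$ once the positivity of $h$ and the supersolution condition $\mathfrak{B}h \geq 0$ are combined. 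I then set $m := \min_{\bar Q_T} v$ and split according to its sign.

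Case 1 ($m \geq 0$, i.e.\ $u \geq 0$ in $\bar Q_T$): if $u \equiv 0$ we land on alternative (a); otherwise I claim alternative (b). Since $c_h \geq 0$, the operator $\mathcal{P}_h$ satisfies Corollary \ref{co2.1} and Theorem \ref{th2.2}. If $v$ vanished at some $(x_0,t_0) \in \Omega \times [0,T]$, the periodic-extension argument in the proof of Corollary \ref{co2.1}, applied to $\mathcal{P}_h$, would force $v \equiv 0$ in $\bar\Omega \times \mathbb{R}$, contradicting $u \not\equiv 0$. If $v(x_0,t_0) = 0$ at $(x_0,t_0) \in \Gamma_1 \times [0,T]$, Theorem \ref{th2.2} for $\mathcal{P}_h$ yields $\partial v/\partial \nu (x_0,t_0) < 0$, whence $\mathfrak{B}u(x_0,t_0) = h\,\partial v/\partial \nu < 0$, contradicting $\mathfrak{B}u \geq 0$. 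Finally, at a zero $(x_0,t_0) \in \Gamma_0 \times [0,T]$ of $u$, the same Hopf lemma gives $\partial u/\partial \nu = h\,\partial v/\partial \nu < 0$, which is exactly \eqref{i.6}.

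Case 2 ($m < 0$): Theorem \ref{th2.3} applies, for $\min_{\bar Q_T} u \leq 0$ is equivalent to $\min_{\bar Q_T} v \leq 0$ via $h > 0$. Two sub-cases arise. If $v \equiv m$ in $Q_T$, then $u = m h$; on $\Gamma_0 \times [0,T]$ the inequality $\mathfrak{B}u = mh \geq 0$ is incompatible with $m < 0$ and $h > 0$ unless $\Gamma_0 = \emptyset$, and combining $\mathcal{P}u = m\mathcal{P}h \geq 0$ and $\mathfrak{B}u = m\mathfrak{B}h \geq 0$ with $\mathcal{P}h \geq 0$ and $\mathfrak{B}h \geq 0$ forces the identities in \eqref{3.2}, yielding alternative (c). If instead $v > m$ in $Q_T$, the minimum is attained only on the lateral boundary $\partial\Omega \times [0,T]$ (the top and bottom faces are ruled out by $T$-periodicity, since a minimum there would again be interior to a periodically extended cylinder). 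It cannot lie on $\Gamma_0 \times [0,T]$, where $u = \mathfrak{B}u \geq 0$ contradicts $u = m h < 0$ at a minimum point; and on $\Gamma_1 \times [0,T]$, Theorem \ref{th2.2} applied to $\mathcal{P}_h$ gives $\partial v/\partial \nu < 0$, so
\[
  \mathfrak{B}u = h\,\frac{\partial v}{\partial \nu} + v\,\mathfrak{B}h < 0
\]
(using $h > 0$, $v = m < 0$, and $\mathfrak{B}h \geq 0$), again contradicting $\mathfrak{B}u \geq 0$. Hence only the sub-case $v \equiv m$ survives.

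The principal obstacles I foresee are the following. First, one must justify applying the parabolic Hopf lemma (Theorem \ref{th2.2}) at lateral boundary points $(x_0,t_0)$ with $t_0 \in \{0,T\}$, which requires a $T$-periodic extension of $v$ and of the coefficients of $\mathcal{P}_h$ to a cylinder in which the point is interior in time, and a verification that the interior-sphere geometric hypotheses of Theorem \ref{th2.2} hold thanks to Assumption (A1) and the fact that the outward normal to $\Omega$ is not parallel to the $t$-axis. Second, one must take care that the strong minimum principle for $\mathcal{P}_h$ used in Case 1 is genuinely the periodic-parabolic version (Corollary \ref{co2.1} applied to $\mathcal{P}_h$), not merely its cylindrical formulation. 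Third, one has to keep track of the boundary identity $\mathfrak{B}u = h\,\partial v/\partial \nu + v\,\mathfrak{B}h$ on $\Gamma_1$, which is the algebraic pivot that lets the sign of $v$ and the supersolution conditions on $h$ cooperate to produce the contradiction in the non-trivial subcases.
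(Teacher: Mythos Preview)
Your proposal is correct and follows essentially the same approach as the paper's proof: conjugate by $h$ to obtain $v=u/h$ with $\mathcal{P}_h v\geq 0$ and $c_h\geq 0$, transform the boundary operator, and then apply Corollary~\ref{co2.1} and the parabolic Hopf lemma to $v$ in each case. The only differences are cosmetic: the paper introduces the auxiliary boundary operator $\mathfrak{B}_h$ with weight $\beta_h:=\mathfrak{B}h/h\geq 0$ and phrases everything as ``$v$ is a supersolution of $(\mathcal{P}_h,\mathfrak{B}_h,Q_T)$'', whereas you work directly with the identity $\mathfrak{B}u=h\,\partial_\nu v+v\,\mathfrak{B}h$; and the paper splits into three cases according to the behavior of $u$ in $\Omega$ (somewhere negative; non-negative with an interior zero; strictly positive in $\Omega$) rather than your two cases on the sign of $m=\min_{\bar Q_T}v$.
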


\noindent Note that \eqref{3.2} entails $\tau=0$ to be an eigenvalue of  the linear problem
\begin{equation}
\label{3.3}
  \left\{ \begin{array}{ll} \mathcal{P}\varphi =\tau \varphi  &\quad
  \hbox{in}\;\;Q_T, \\ \mathfrak{B}\varphi = 0
  &\quad\hbox{on}\;\;\p\O\times [0,T],\end{array}\right.
\end{equation}
with associated eigenfunction $\varphi=h$.  In particular,  $\tau=0$ is an eigenvalue to a positive
eigenfunction of $(\mc{P},\mf{B},Q_T)$ if Alternative (c) holds.

\begin{proof} Throughout this proof, we will use the notations introduced in the proof of
Theorem \ref{th2.3}. Let  $u\in E$ be an arbitrary  supersolution of $(\mc{P},\mathfrak{B},Q_T)$. Then,
\begin{equation*}
  v:= \frac{u}{h}\in E.
\end{equation*}
Moreover,
\begin{equation}
\label{3.4}
  \mathcal{P}u =\mathcal{P}(hv)= h \mc{P}_h v,
\end{equation}
where
\[
  \mc{P}_h=\p_t +\mf{L}_h
\]
with $\mf{L}_h$  given by \eqref{ii.4}. Hence, since we are assuming that $\mathcal{P}u \geq 0$ in $Q_T$, we find that
\begin{equation}
\label{3.5}
  \mathcal{P}_h v \geq 0\qquad \hbox{in}\quad Q_T.
\end{equation}
On the other hand, along $\G_0$ we have that $u \geq 0$. Thus, since $h>0$, we also have that
\begin{equation}
\label{3.6}
  v \geq 0\qquad \hbox{on}\quad \G_0\times [0,T].
\end{equation}
Finally, along $\G_1\times [0,T]$, we obtain that
\begin{align*}
   0 & \leq \mathfrak{B} u = \mathfrak{B} (hv)=h \frac{\p v}{\p \nu} + v \frac{\p h}{\p \nu}+\b hv
   = h\left( \frac{\p v}{\p \nu}+\frac{\frac{\p h}{\p \nu} +\b h}{h}v \right) \cr & =
   h \left( \frac{\p v}{\p \nu}+\frac{\mathfrak{B}h}{h}v\right) =
   h \left( \frac{\p v}{\p \nu}+\b_h v\right),
\end{align*}
where we have denoted
\begin{equation}
\label{3.7}
 \b_h := \frac{\mathfrak{B}h}{h}\geq 0\qquad \hbox{on}\quad \G_1\times [0,T],
\end{equation}
because $\mathfrak{B}h \geq 0$ on $\p\O\times [0,T]$. Consequently,
\begin{equation}
\label{3.8}
  0 \leq \mathfrak{B} u = h \mathfrak{B}_h v \qquad \hbox{on}\quad   \p\O\times [0,T],
\end{equation}
where $\mathfrak{B}_h$ stands for the boundary operator
\begin{equation}
\label{3.9}
    \mathfrak{B}_h \xi := \left\{ \begin{array}{ll}
    \xi \qquad & \hbox{on } \;\;\G_0\times [0,T] \\     \frac{\p \xi}{\p \nu} + \b_h \xi \qquad &
   \hbox{on } \;\; \G_1\times [0,T]  \end{array} \right.
\end{equation}
for all $\xi \in \mathcal{C}(\G_0)\oplus \mathcal{C}^1(\O\cup \G_1)$. Summarizing, the function $v$ is a supersolution of
$(\mathcal{P}_h,\mathfrak{B}_h,Q_T)$. Subsequently, we will distinguish  three different cases.
\par
\vspace{0.2cm}
\noindent\textbf{Case 1:} Suppose $u(x_0,t_0)<0$ for some $x_0\in\O$ and $t_0\in [0,T]$. Then,
\[
  v(x_0,t_0)=\tfrac{u(x_0,t_0)}{h(x_0,t_0)}<0
\]
and hence,
\begin{equation*}
  m:= \min_{\bar Q_T} v <0.
\end{equation*}
Thus, since $c_h\geq 0$ and $\mathcal{P}_h v\geq 0$ in $ Q_T$, it follows from Corollary \ref{co2.1} that either
\begin{equation}
\label{3.10}
  v(x,t)>m \qquad \hbox{for all}\quad (x,t)\in\O\times [0,T],
\end{equation}
or
\begin{equation}
\label{3.11}
  v\equiv  m \qquad \hbox{in}\quad \bar Q_T.
\end{equation}
Suppose  \eqref{3.10} holds. Then, thanks to \eqref{3.6},
\begin{equation*}
  v(x,t)>m \qquad \hbox{for all}\quad (x,t)\in (\O\cup \G_0)\times [0,T].
\end{equation*}
Consequently, there exist $x_1\in \G_1$ and $t_1 \in [0,T]$ such that $m=v(x_1,t_1)$. Moreover, by Corollary \ref{co2.2}, $\frac{\p v}{\p\nu}(x_1,t_1)<0$. Hence,
\begin{align*}
  0 \leq \mathfrak{B}_h v (x_1,t_1) & = \frac{\p v}{\p
  \nu}(x_1,t_1)+\b_h(x_1,t_1) v(x_1,t_1)\\ & < \b_h(x_1,t_1) v(x_1,t_1) =
  \b_h(x_1,t_1) m,
\end{align*}
which implies $\b_h(x_1,t_1)<0$ because $m<0$. By \eqref{3.7}, this is impossible. Consequently, instead of \eqref{3.10}, condition \eqref{3.11}   holds. This implies $u = m h$ in $\bar Q_T$ and Alternative (c) occurs. The remaining assertions of Alternative (c) can be inferred as follows. Suppose $\G_0\neq \emptyset$ and pick $x_0\in \G_0$. Then, since
$h(x_0,t)>0$ for all $t \in [0,T]$ and $m<0$, we find that
\[
  u(x_0,t) = m h(x_0,t)<0,
\]
which is impossible,
because $u\geq 0$ on $\G_0\times [0,T]$. This contradiction shows that $\G_0=\emptyset$. Moreover,
\begin{equation*}
  0\leq \mathcal{P} u = m \mathcal{P}h  \leq 0
\end{equation*}
implies  $\mathcal{P}u=0$ in $\O\times [0,T]$ and
\begin{equation*}
  0 \leq \mathfrak{B} u = m \mathfrak{B}h \leq 0
\end{equation*}
entails $\mathfrak{B} u = 0$ on $\p\O\times [0,T]$. Consequently, \eqref{3.2} holds.
\par
To complete the proof of the theorem it remains to show that some of the first two alternatives occurs when
$u\geq 0$ in $Q_T$.
\par
\vspace{0.2cm}
\noindent\textbf{Case 2:} Suppose $u\geq 0$ and $u(x_0,t_0)=0$ for some $(x_0,t_0)\in\O\times [0,T]$. Then,
$v\geq 0$ in $\bar Q_T$ and $v(x_0,t_0)=0$. Thus, since $v$ is a
supersolution of $(\mathcal{P}_h,\mathfrak{B}_h,Q_T)$, by Corollary \ref{co2.1}, $v=0$ in $\bar Q_T$.
Therefore, $u=0$ in $\bar Q_T$ and Alternative (a) holds.
\par
\vspace{0.2cm}
\noindent\textbf{Case 3:}
Finally, suppose that $u(x,t)>0$ for all $x\in\O$ and $t\in [0,T]$.
Then, $v(x,t)>0$ for all $x\in\O$ and $t\in [0,T]$. Moreover, since $v$ is a supersolution
of $(\mathcal{P}_h,\mathfrak{B}_h,Q_T)$, it follows from Corollary \ref{co2.2} that
\begin{equation}
\label{3.12}
  \frac{\p v}{\p \nu}(x,t) < 0\qquad \hbox{for all}\quad
  (x,t)\in \left[ v^{-1}(0)\cap \p\O\right] \times [0,T].
\end{equation}
Suppose $\G_1\neq \emptyset$ and $v(x_1,t)=0$ for some $x_1\in\G_1$ and $t\in [0,T]$. Then,
\begin{equation*}
  0 \leq \mathfrak{B}_h v(x_1,t)=\frac{\p v}{\p   \nu}(x_1,t)+\b_h(x_1,t)v(x_1,t)= \frac{\p v}{\p \nu}(x_1,t)
\end{equation*}
which contradicts \eqref{3.12}. Thus, $v(x_1,t)>0$ for all $x_1\in\G_1$ and $t\in [0,T]$. So,
also $u$ satisfies this property. Moreover, for each $x_0\in\G_0$ with $u(x_0,t)=0$ for some $t \in [0,T]$,
we have that $v(x_0,t)=0$ and hence \eqref{3.12} yields
\begin{align*}
  \frac{\p u}{\p \nu}(x_0,t)  & = \frac{\p (hv)}{\p \nu}(x_0,t)
   = h(x_0,t) \frac{\p v}{\p \nu}(x_0,t)+v(x_0,t) \frac{\p h}{\p
   \nu}(x_0,t) \cr & = h(x_0,t) \frac{\p v}{\p \nu}(x_0,t)< 0.
\end{align*}
Therefore, Alternative (b) holds. This ends the proof.
\end{proof}

As an immediate consequence of  Theorem \ref{th3.1}
the next results follow.

\begin{corollary}
\label{co31} Suppose $(\mathcal{P},\mathfrak{B},Q_T)$ admits a strict supersolution $h \in E$ satisfying \eqref{3.1}.
Then, every supersolution  $u\in E \setminus\{0\}$ of $(\mathcal{P},\mathfrak{B},Q_T)$ (in particular, any strict supersolution) satisfies $u(x,t)>0$ for all $(x,t) \in (\O\cup\G_1)\times [0,T]$, and
\begin{equation*}
  \frac{\p u}{\p \nu}(x,t)<0 \quad \hbox{for all} \;\; (x,t)\in \left[ u^{-1}(0)\cap \G_0\right]\times [0,T].
\end{equation*}
\end{corollary}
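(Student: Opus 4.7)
The plan is to derive this corollary directly from Theorem \ref{th3.1}, simply by ruling out the two non-positive alternatives listed there. Since $h$ is assumed to satisfy \eqref{3.1}, Theorem \ref{th3.1} applies to every supersolution $u\in E$ of $(\mathcal{P},\mathfrak{B},Q_T)$, and produces the trichotomy of Alternatives (a), (b), (c). All I need to show is that, under the additional hypothesis that $h$ is \emph{strict}, both (a) and (c) are incompatible with $u\in E\setminus\{0\}$; then (b) must hold, which is precisely the conclusion.

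First I would observe that Alternative (a), namely $u=0$ in $Q_T$, is excluded outright by the standing assumption $u\in E\setminus\{0\}$. Next, I would rule out Alternative (c). If (c) were to occur, then by the last assertion of Theorem \ref{th3.1} the function $h$ itself would satisfy
\begin{equation*}
  \mathcal{P}h = 0 \quad \hbox{in } Q_T, \qquad \mathfrak{B}h = 0 \quad \hbox{on } \partial\O\times[0,T].
\end{equation*}
But this contradicts the strictness of $h$ as a supersolution, because by definition a strict supersolution must have at least one of the inequalities $\mathcal{P}h\geq 0$ in $Q_T$ or $\mathfrak{B}h\geq 0$ on $\partial\O\times[0,T]$ holding strictly somewhere.

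Since Alternatives (a) and (c) are both ruled out, $u$ must satisfy Alternative (b) of Theorem \ref{th3.1}. Unpacking (b) yields, verbatim, the two conclusions of the statement: $u(x,t)>0$ on $(\O\cup\G_1)\times[0,T]$, together with the strict sign of the outward normal derivative on the Dirichlet part of the boundary where $u$ vanishes. There is really no serious obstacle here; the only point worth being careful about is the logical passage from ``$h$ is a strict supersolution'' to ``(c) cannot occur'', which as noted above follows at once from the definition of strict supersolution given in \eqref{i.5}, since (c) forces equality in both of those inequalities simultaneously.
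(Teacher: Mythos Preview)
Your proof is correct and follows essentially the same approach as the paper: apply Theorem \ref{th3.1}, rule out Alternative (a) since $u\neq 0$, and rule out Alternative (c) because its conclusion \eqref{3.2} contradicts the strictness of $h$. The paper's proof is just a terser version of exactly this argument.
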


\begin{proof} Since $h$ is a strict supersolution of $(\mathcal{P},\mathfrak{B},Q_T)$, \eqref{3.2} cannot be satisfied and hence, Alternative (c) cannot occur. As $u\neq 0$,  Alternative (b) occurs.  \end{proof}

As a consequence of Corollary\,\ref{co31}, the following uniqueness result holds.

\begin{corollary}
\label{co32}
Suppose $(\mathcal{P},\mathfrak{B},Q_T)$ admits a strict supersolution $h \in E$ satisfying \eqref{3.1}.
Then, $u=0$ is the unique function $u\in E$ solving
\begin{equation}
\label{3.13}
  \left\{ \begin{array}{ll} \mathcal{P}u = 0 &\quad   \hbox{in}\;\;Q_T, \\ \mathfrak{B}u = 0
  &\quad\hbox{on}\;\;\p\O\times [0,T].\end{array}\right.
\end{equation}
Therefore, for every $f, g \in F$, the boundary value problem
\begin{equation}
\label{3.14}
  \left\{ \begin{array}{ll} \mathcal{P}u = f \quad &   \hbox{in}\;\; Q_T \\
  \mathfrak{B}u = g & \hbox{on}\;\;\p\O\times [0,T] \end{array} \right.
\end{equation}
possesses at most one solution $u\in E$.
\end{corollary}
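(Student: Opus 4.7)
The plan is to reduce the uniqueness claim to a direct application of Corollary \ref{co31}. First I observe that any solution $u\in E$ of the homogeneous problem \eqref{3.13} is automatically a supersolution of $(\mathcal{P},\mathfrak{B},Q_T)$, since $\mathcal{P}u=0\geq 0$ and $\mathfrak{B}u=0\geq 0$. But the same observation applies to $-u$: it too satisfies $\mathcal{P}(-u)=0\geq 0$ and $\mathfrak{B}(-u)=0\geq 0$, so $-u$ is also a supersolution.

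Next, I would argue by contradiction. Suppose $u\neq 0$. Because $h\in E$ is a \emph{strict} supersolution of $(\mathcal{P},\mathfrak{B},Q_T)$ satisfying \eqref{3.1}, Corollary \ref{co31} applies to every non-trivial supersolution. Applied to $u\neq 0$, it yields $u(x,t)>0$ for all $(x,t)\in(\Omega\cup\Gamma_1)\times[0,T]$. Since $-u\neq 0$ as well, applying the same corollary to $-u$ gives $-u(x,t)>0$, i.e., $u(x,t)<0$, on the same set. These two inequalities are incompatible (the set $\Omega\cup\Gamma_1$ is non-empty because $\Omega$ is a non-empty open set), so $u\equiv 0$ in $Q_T$, which by $T$-periodicity extends to $\bar\Omega\times\R$.

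For the second assertion, the argument is the standard linear one: if $u_1,u_2\in E$ both solve \eqref{3.14} for given $f,g\in F$, then $w:=u_1-u_2\in E$ satisfies $\mathcal{P}w=f-f=0$ in $Q_T$ and $\mathfrak{B}w=g-g=0$ on $\partial\Omega\times[0,T]$, i.e., $w$ solves \eqref{3.13}. By the first part, $w=0$, so $u_1=u_2$.

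I do not anticipate a real obstacle here; the entire content of the corollary is packaged inside Corollary \ref{co31}. The only subtlety worth flagging is that one must invoke Corollary \ref{co31} twice, once for $u$ and once for $-u$, exploiting the fact that a \emph{solution} of the homogeneous problem is simultaneously a supersolution and the negative of a supersolution. This is precisely what fails when $h$ is merely a (non-strict) supersolution, since then Alternative (c) of Theorem \ref{th3.1} is not excluded and one could have non-trivial multiples of $h$ solving \eqref{3.13} (as in the eigenfunction situation \eqref{3.2}).
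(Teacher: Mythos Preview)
Your proof is correct and follows essentially the same approach as the paper: apply Corollary~\ref{co31} to both $u$ and $-u$ (which are supersolutions since $u$ solves the homogeneous problem) to derive the contradiction $u>0$ and $-u>0$ on $(\Omega\cup\Gamma_1)\times[0,T]$, and then deduce uniqueness for \eqref{3.14} by linearity. Your additional remark explaining why the strictness of $h$ is needed (to rule out Alternative~(c) of Theorem~\ref{th3.1}) is a nice clarification that the paper leaves implicit.
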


\begin{proof} Suppose $u\in E\setminus\{0\}$ solves \eqref{3.13}. Then, by Corollary\,\ref{co31},
\begin{equation*}
  u(x,t)>0\quad \hbox{for all}\;\; (x,t) \in (\O\cup\G_1)\times [0,T].
\end{equation*}
Moreover, as  $-u\in E\setminus\{0\}$ provides us with another solution of \eqref{3.13}, we also have
\begin{equation*}
 -u(x,t)>0\quad \hbox{for all}\;\; (x,t) \in (\O\cup\G_1)\times [0,T],
\end{equation*}
which is impossible. Therefore, $u=0$ is the only function $u\in E$ solving \eqref{3.13}. Now, the uniqueness result for \eqref{3.14} is obvious.  \end{proof}

According to Lemma 2.1 of Chapter 2 of \cite{LG13},
there exist a function $\psi \in \mathcal{C}^{2+\t}(\bar\O)$ and a positive constant $\g >0$ such that
\begin{equation}
\label{3.15}
  \frac{\p \psi}{\p \nu}(x) \geq \g \qquad \hbox{for all}\quad x\in   \G_1.
\end{equation}
The extra H\"{o}lder regularity of $\psi$ is a consequence of the regularity of $\p\O$.

\begin{proposition}
\label{pr31} Let $\psi \in \mathcal{C}^{2+\t}(\bar\O)$ be a function satisfying \eqref{3.15} for some constant $\g>0$. Then, there exist $\o_0\in\R$ and $M>0$ such that
\begin{equation}
\label{3.16}
  h:= e^{M \psi}\in\mathcal{C}^{2+\t}(\bar\O)
\end{equation}
is a strict supersolution of $(\mathcal{P}+\o,\mathfrak{B},Q_T)$ for all $\o>\o_0$.
\end{proposition}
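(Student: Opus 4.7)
The plan is to substitute $h=e^{M\psi}$ directly into $(\mathcal{P}+\omega)$ and $\mathfrak{B}$, exploit the hypothesis $\partial_\nu\psi\geq\gamma>0$ on $\Gamma_1$ to control the oblique derivative by choosing $M$ large, and then absorb the remaining bounded terms in $\bar Q_T$ by choosing $\omega$ large.

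First I would compute. Since $\psi=\psi(x)$ is time-independent, $\partial_t h=0$, and straightforward differentiation gives
\begin{equation*}
(\mathcal{P}+\omega)h= e^{M\psi}\!\left[-M^2\!\sum_{i,j=1}^{N}\!a_{ij}\frac{\partial\psi}{\partial x_i}\frac{\partial\psi}{\partial x_j}+M\!\left(-\!\sum_{i,j=1}^{N}\!a_{ij}\frac{\partial^2\psi}{\partial x_i\partial x_j}+\sum_{j=1}^{N}b_j\frac{\partial\psi}{\partial x_j}\right)+c+\omega\right].
\end{equation*}
On $\Gamma_0$ one has $\mathfrak{B}h=h=e^{M\psi}>0$. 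On $\Gamma_1$ one has
\begin{equation*}
\mathfrak{B}h=\frac{\partial h}{\partial\nu}+\beta h = e^{M\psi}\!\left(M\frac{\partial\psi}{\partial\nu}+\beta\right)\ge e^{M\psi}\bigl(M\gamma+\beta\bigr).
\end{equation*}

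Next I would fix $M$. Because $\beta\in\mathcal{C}^{1+\theta}(\Gamma_1)$ is bounded, choosing any
\begin{equation*}
M>\frac{\max\{1,\;\|\beta\|_{\mathcal{C}(\Gamma_1)}\}}{\gamma}
\end{equation*}
secures $M\gamma+\beta>0$ on $\Gamma_1$, so that $\mathfrak{B}h>0$ on the whole of $\partial\Omega\times[0,T]$. Fix such an $M$ once and for all.

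With $M$ fixed, the bracket in the interior formula is a function in $F$, hence continuous on the compact set $\bar Q_T$; denote by $K\ge 0$ a uniform upper bound for the modulus of the bracket computed with $c$ and with $\omega=0$. Setting $\omega_0:=K$, for every $\omega>\omega_0$ the bracket is strictly positive, which yields $(\mathcal{P}+\omega)h>0$ in $\bar Q_T$. Together with $\mathfrak{B}h>0$ on $\partial\Omega\times[0,T]$ this makes $h=e^{M\psi}$ a strict supersolution of $(\mathcal{P}+\omega,\mathfrak{B},Q_T)$, as required. I expect no serious obstacle: the only delicate issue is that the quadratic-in-$M$ term $-M^2\sum a_{ij}\psi_i\psi_j$ has the wrong sign by ellipticity, but since $\psi$ is fixed and $M$ is chosen from the boundary requirement alone, this term is merely a bounded constant that gets dominated by the shift $\omega$.
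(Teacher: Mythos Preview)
Your proof is correct and follows essentially the same approach as the paper: fix $M$ large via the boundary estimate $M\gamma+\beta>0$ on $\Gamma_1$, then absorb the bounded interior term $\mathfrak{L}h$ by choosing $\omega_0$ large. The paper is slightly terser (it simply writes $(\mathcal{P}+\omega_0)h=\mathfrak{L}h+\omega_0 h>0$ for large $\omega_0$ without expanding $\mathfrak{L}h$), but your explicit computation and your remark about the sign of the $-M^2\sum a_{ij}\psi_i\psi_j$ term only make the argument more transparent.
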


\begin{proof}  Suppose $h\in \mathcal{C}^{2}(\bar\O)$, with $h(x)>0$ for all $x\in \bar\O$, is a strict supersolution of $(\mathcal{P}+\o_0,\mathfrak{B},Q_T)$ for some $\o_0\in\R$.
Then, for every $\o>\o_0$,
\begin{equation*}
  (\mathcal{P}+\o)h=(\mathcal{P}+\o_0)h+(\o-\o_0)h \geq   (\o-\o_0)h>0.
\end{equation*}
Hence, $h$ provides us with a strict supersolution of $(\mathcal{P}+\o,\mathfrak{B},Q_T)$. Therefore, it suffices to show that  there exist $\o_0\in\R$ and $M>0$ such that \eqref{3.16} is a strict supersolution
of $(\mathcal{P}+\o_0,\mathfrak{B},Q_T)$. We are assuming $h(x)>0$ for each $x\in\bar\O$. Moreover, by \eqref{3.16}, along $\G_1$ we have that
\begin{equation*}
  \mathfrak{B} h  = M h \frac{\p \psi }{\p \nu} + \b h
    = \left( M \frac{\p \psi }{\p \nu}+\b\right) h\geq \left( M \g +\b\right) h >0
\end{equation*}
provided $M>0$ is sufficiently large. Therefore,
\begin{equation*}
  \mathfrak{B} h >0\qquad \hbox{on}\quad \p\O
\end{equation*}
for sufficiently large $M>0$. Moreover, since $h$ is independent on $t$ and it is positive and separated away from zero in $\bar \O$,
\begin{equation*}
  (\mathcal{P}+\o_0) h = (\p_t+\mathfrak{L}+\o_0)h= \mathfrak{L} h+\o_0 h >0
\end{equation*}
for sufficiently large $\o_0>0$. This ends the proof.
\end{proof}

As a by-product of Corollary \ref{co32}, Proposition \ref{pr31} and the abstract theory of H. Amann \cite{Amann}, P. Hess \cite{Hess} and D. Daners and P. Koch-Medina \cite{DK}, the next result holds.

\begin{theorem}
\label{th32} There exists $\o_0\in\R$ such that, for each $\o > \o_0$ and  $f\in F$, the periodic-parabolic problem
\begin{equation}
\label{3.17}
  \left\{ \begin{array}{ll}( \mathcal{P}+\o)u = f  \quad &   \hbox{in}\;\; Q_T, \\
  \mathfrak{B}u = 0 & \hbox{on}\;\;\p\O\times [0,T], \end{array} \right.
\end{equation}
possesses a unique solution $u\in E$. Moreover, if $f>0$, i.e., $f\geq 0$ but  $f\neq 0$, then $u \gg 0$ in the sense that
\begin{equation}
\label{3.18}
   u(x,t)>0 \qquad \hbox{for all}\quad (x,t)\in \left(\O\cup\G_1\right)\times [0,T],
\end{equation}
and
\begin{equation}
\label{3.19}
  \frac{\p u}{\p \nu}(x,t)<0 \qquad \hbox{for all}\quad   (x,t) \in \G_0 \times [0,T].
\end{equation}
\end{theorem}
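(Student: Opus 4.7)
The plan is to obtain uniqueness from the classification machinery already developed in this section, to import existence from the cited periodic-parabolic functional-analytic framework, and to deduce strong positivity by viewing the solution itself as a supersolution and invoking Corollary \ref{co31}.

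For uniqueness, I would start by fixing, via Proposition \ref{pr31}, constants $M>0$ and $\omega_0\in\R$ such that $h=e^{M\psi}$ is a strict supersolution of $(\mathcal{P}+\omega,\mathfrak{B},Q_T)$ for every $\omega>\omega_0$; crucially, $h$ is independent of $t$, smooth, and satisfies $h(x)>0$ on $\bar\O$, i.e. condition \eqref{3.1}. Therefore Corollary \ref{co32}, applied with $\mathcal{P}$ replaced by $\mathcal{P}+\omega$, yields that \eqref{3.17} admits at most one solution $u\in E$ for each $\omega>\omega_0$ and $f\in F$.

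For existence I would appeal to the abstract framework for linear periodic-parabolic problems developed by H.\,Amann \cite{Amann}, P.\,Hess \cite{Hess} and D.\,Daners--P.\,Koch-Medina \cite{DK}. The standard route is to view $\mathcal{P}+\omega$ as the generator of a non-autonomous evolution family $U(t,s)$ on a suitable space (e.g.\ $\mathcal{C}_0(\bar\O)$ with the mixed boundary conditions encoded by $\mathfrak{B}$), and to observe that a $T$-periodic solution of \eqref{3.17} corresponds to a fixed point of an affine Poincaré map $v\mapsto U(T,0)v+w(f)$, where $w(f)$ is the $T$-value of the inhomogeneous initial value problem starting from zero. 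Choosing $\omega_0$ possibly larger so that the evolution generated by $\mathcal{P}+\omega$ enjoys a uniform exponential decay estimate, the Poincaré map becomes a strict contraction whose unique fixed point, by standard parabolic regularity and the periodicity of the coefficients and of $f$, lies in $E$ and solves \eqref{3.17}. I would simply cite \cite{Amann,Hess,DK} for this passage rather than reconstruct it.

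For the strong positivity assertion, assume $f\in F$ with $f\ge 0$ and $f\ne 0$, and let $u\in E$ be the (unique) solution of \eqref{3.17}. Then $(\mathcal{P}+\omega)u=f\ge 0$ in $Q_T$ and $\mathfrak{B}u=0\ge 0$ on $\partial\O\times[0,T]$, so $u$ is a supersolution of $(\mathcal{P}+\omega,\mathfrak{B},Q_T)$. Moreover $u\not\equiv 0$, for otherwise $f=(\mathcal{P}+\omega)0=0$. Since $h=e^{M\psi}$ is a strict supersolution of $(\mathcal{P}+\omega,\mathfrak{B},Q_T)$ satisfying \eqref{3.1}, Corollary \ref{co31} applies and delivers exactly the conclusions \eqref{3.18} and \eqref{3.19}.

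The main obstacle is the existence step: Theorems \ref{th3.1}--\ref{co32} give us only uniqueness and positivity, so the solvability of \eqref{3.17} has to be borrowed from the abstract semigroup/evolution-family theory for periodic-parabolic operators under mixed boundary conditions, and one has to make sure that the cited framework in \cite{Amann,Hess,DK} is available for the present oblique-derivative setting where $\b$ may change sign on $\G_1$; the extra $\omega$ (with $\omega_0$ enlarged if necessary) is precisely what makes the generator shifted enough to fit into that framework.
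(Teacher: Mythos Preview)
Your uniqueness and strong positivity arguments match the paper's proof exactly: invoke Proposition~\ref{pr31} to produce the strictly positive strict supersolution $h=e^{M\psi}$, then apply Corollary~\ref{co32} for uniqueness and Corollary~\ref{co31} for \eqref{3.18}--\eqref{3.19}.

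For existence, the paper does not simply cite the abstract framework and hope it covers sign-changing $\beta$; it resolves precisely the obstacle you flag by a concrete reduction. Namely, it performs the change of variable $u=hv$ with the very same $h=e^{M\psi}$, which transforms \eqref{3.17} into
\[
(\mathcal{P}_h+\omega)v=\tfrac{f}{h}\quad\hbox{in}\;Q_T,\qquad \mathfrak{B}_h v=0\quad\hbox{on}\;\partial\Omega\times[0,T],
\]
where $\mathfrak{B}_h=\partial_\nu+\tfrac{\mathfrak{B}h}{h}$ on $\Gamma_1$ has coefficient $\tfrac{\mathfrak{B}h}{h}=M\partial_\nu\psi+\beta>0$ for $M$ large, by \eqref{3.15}. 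One then lands in the classical setting with nonnegative Robin coefficient, where Lemma~14.3 of Hess \cite{Hess} (supplemented by Section~13 of Daners--Koch-Medina \cite{DK} to handle the mixed Dirichlet/Robin split) gives existence directly. Your proposed route---enlarging $\omega$ so that the Poincar\'e map contracts---does not by itself address the boundary operator, since shifting the zeroth-order term leaves $\beta$ untouched; the change of variable is what actually converts the sign-changing boundary weight into a nonnegative one. This is the one substantive ingredient your sketch is missing.
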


\begin{proof}
By Proposition\,\ref{pr31}, there exist $\o_0\in\R$ and $h\in\mathcal{C}^{2+\t}(\bar\O)$ such that
$h(x)>0$ for all $x\in \bar\O$ and it is a strict supersolution of $(\mathcal{P}+\o,\mathfrak{B},Q_T)$ for
all $\o>\o_0$. The uniqueness assertion follows from Corollary\,\ref{co32}.
\par
Suppose $\o>\o_0$, $f\geq 0$, $f\neq 0$, and \eqref{3.17} has a solution $u\in E$. Then, $u$ is a strict supersolution of $(\mathcal{P}+\o,\mathfrak{B},Q_T)$ and, owing to Corollary\,\ref{co31}, \eqref{3.18} and \eqref{3.19} hold.
\par
In order to establish the existence we will assume, in addition, that
\[
  \max_{\bar Q_T} c+\o_0 > 0.
\]
If $\b\geq 0$ on $\G_1$, then the existence follows from the abstract theory of P. Hess \cite{Hess}. Suppose, more generally, that
$\b$ takes some negative value. Then, we perform the change of variable
\begin{equation*}
  u = hv,\qquad h:= e^{M\psi},
\end{equation*}
for a sufficiently large constant $M>0$. Using the notations introduced in the proof of Theorem \,\ref{3.1} we have that, for every $\o\in\R$,
\begin{equation*}
  (\mathcal{P}+\o)u=h(\mathcal{P}_h +\o)v\qquad \hbox{in}\quad Q_T.
\end{equation*}
Now, let us enlarge $\o_0$, if necessary, so that
\[
  \max_{\bar Q_T} c_h +\o_0 > 0.
\]
Along $\p\O$ we have that
\[
  \mathfrak{B}u = h \mathfrak{B}_h v,
\]
where $\mathfrak{B}_h$ is the Dirichlet operator on $\G_0$ and the oblique derivative operator
\[
  \mathfrak{B}_h := \p_\nu +\tfrac{\mathfrak{B}h}{h}
\]
on $\G_1$. As
\[
  \tfrac{\mathfrak{B}h}{h}= M \p_\nu \psi + \b,
\]
thanks to \eqref{3.15},  $\mathfrak{B}h/h>0$ on $\G_1$ for sufficiently large $M>0$. Therefore, applying Lemma 14.3 of
P. Hess \cite{Hess} to the transformed problem
\begin{equation*}
  \left\{ \begin{array}{ll}( \mathcal{P}_h+\o)v = \tfrac{f}{h}  \quad &
  \hbox{in}\;\; Q_T, \\    \mathfrak{B}_h v = 0  & \hbox{on}\;\;\p\O\times[0,T], \end{array} \right.
\end{equation*}
ends the proof. Although \cite{Hess} did not deal with mixed boundary conditions, the invertibility and positivity results of Section
13 of D. Daners and P. Koch-Medina \cite{DK}, applicable in our general setting here, allow us to adapt, very easily, the proof of  Lemma 14.3 of \cite{Hess} to conclude the proof of the theorem.  Once guaranteed that the underlying evolution operators are well defined (this task has been accomplished in \cite{DK}), the result follows easily from the formula of variation of the constants as in \cite{Hess}. The technical details are omitted here.
\end{proof}

\section{Uniform boundary lemma for periodic parabolic problems}

\noindent  The next result provides us with a uniform estimate of the decaying rate  of the positive supersolutions of
$(\mathcal{P},\mathfrak{B},Q_T)$ along $\G_0$. It is a substantial extension of Theorem 2.3 of J. L\'opez-G\'omez \cite{LG13}.

\begin{theorem}
\label{th41}
 Any non-negative supersolution $u\in E\setminus \{0\}$ of $(\mathcal{P},\mathfrak{B},Q_T)$ satisfies
\eqref{3.18} and \eqref{3.19}. Moreover, there exists $\d=\d(u)>0$ such that
\begin{equation}
\label{4.1}
  u(x,t) \geq \d \,\mathrm{dist\,}(x,\G_0)\qquad \hbox{for all}\quad (x,t)\in\O\times [0,T].
\end{equation}
\end{theorem}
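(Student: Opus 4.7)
The strategy is to split Theorem \ref{th41} into two parts: the qualitative assertions \eqref{3.18}--\eqref{3.19}, and the quantitative linear lower bound \eqref{4.1}. For the first part I would invoke Theorem \ref{th3.1} after a convenient shift. Since $u\geq 0$, for any $\o\geq 0$ one has $(\mc{P}+\o)u=\mc{P}u+\o u\geq 0$ and $\mf{B}u\geq 0$, so $u$ is a non-negative supersolution of $(\mc{P}+\o,\mf{B},Q_T)$. By Proposition \ref{pr31}, for $\o$ sufficiently large the function $h=e^{M\psi}$ is a strict supersolution of $(\mc{P}+\o,\mf{B},Q_T)$ with $h>0$ on $\bar{Q}_T$. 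Theorem \ref{th3.1} applied to the shifted problem forces $u$ into Alternative (a), (b) or (c): (a) is impossible because $u\neq 0$, and (c) is impossible because $u\geq 0$ (it would require $u=mh$ with $m<0$ and $h>0$). Hence Alternative (b) holds, which yields $u(\cdot,t)\gg 0$ for every $t$, together with $\p u/\p\nu<0$ at the zeros of $u$ on $\G_0$.

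For the estimate \eqref{4.1}, the case $\G_0=\emptyset$ is immediate: $u>0$ on the compact set $\bar{\O}\times[0,T]$ gives $u\geq c_1>0$, and $\mathrm{dist}(x,\G_0)$ is bounded above by $\mathrm{diam}(\O)$. Assume $\G_0\neq\emptyset$ and set $d(x):=\mathrm{dist}(x,\G_0)$. By the $\mc{C}^{2+\t}$ regularity of $\G_0$ there exists $\e_0>0$ and a $\mc{C}^{2+\t}$ nearest-point projection $x\mapsto y(x)\in\G_0$ on the tubular neighborhood $N_{\e_0}:=\{x\in\O : d(x)<\e_0\}$, with $x=y(x)-d(x)\nu(y(x))$. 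On the compact complement $(\bar\O\setminus N_{\e_0})\times[0,T]$, positivity and continuity of $u$ yield a lower bound $u\geq c_2>0$, and boundedness of $d$ produces the linear estimate on that region. Inside $N_{\e_0}\times[0,T]$, the $\mc{C}^{2+\t,1+\t/2}$ regularity of $u$ furnishes the uniform Taylor expansion
\[
  u(x,t)=u(y(x),t)-d(x)\tfrac{\p u}{\p \nu}(y(x),t)+R(x,t),\qquad |R(x,t)|\leq C\,d(x)^2,
\]
reducing the problem to controlling $u(y,t)$ and $f(y,t):=-\p u/\p \nu(y,t)$ on the compact set $\G_0\times[0,T]$.

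The decisive step is extracting a \emph{uniform} Hopf-type lower bound for $f$ near the compact zero set $Z:=\{(y,t)\in\G_0\times[0,T]:u(y,t)=0\}$. The pointwise information from Alternative (b) only guarantees $f>0$ on $Z$, but a standard compactness/contradiction argument---assume a sequence $(y_n,t_n)$ with $u(y_n,t_n)\to 0$ and $f(y_n,t_n)\to 0$, extract a subsequential limit $(y_*,t_*)\in Z$, and contradict $f(y_*,t_*)>0$---produces $\eta,\mu>0$ such that $f(y,t)\geq\mu$ whenever $u(y,t)<\eta$. Combined with the Taylor expansion, this gives $u(x,t)\geq d(x)(\mu-C\,d(x))\geq(\mu/2)d(x)$ whenever $u(y(x),t)<\eta$ and $d(x)\leq\mu/(2C)$; while $u(y(x),t)\geq\eta$ combined with uniform continuity of $u$ yields $u(x,t)\geq\eta/2$ for $d(x)$ small, hence $\geq(\eta/(2\e_0))\,d(x)$. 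Taking $\d$ as the minimum of the constants produced in the three regions closes the argument.
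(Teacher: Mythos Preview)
Your argument is correct. The first part—shifting by $\omega$ and invoking Proposition~\ref{pr31} together with Theorem~\ref{th3.1}—matches the paper's own proof exactly.

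For the lower bound \eqref{4.1}, however, the paper takes a different and more constructive route: it builds an explicit radial barrier $v(x)=e^{\alpha(R^2-|x-x_0|^2)}-1$ on each ball $B_R(x_0)\subset\Omega$, checks that $\mathcal{P}^+v\leq 0$ on the annulus $\{R/2<|x-x_0|<R\}$ for a suitable $\alpha=\alpha(\mathfrak{L},R)$, and compares $u$ against $\varepsilon v$ via the periodic-parabolic minimum principle to obtain $u(x,t)\geq M(\mathfrak{L},R)\,u_R\,\mathrm{dist}(x,\partial B_R(x_0))$ with the constant $M$ \emph{independent of $u$}; this local estimate is then transferred to $\Gamma_0$ through the uniform interior sphere property. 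Your soft approach—Taylor expansion along the inward normal plus a compactness argument upgrading the pointwise Hopf sign on $Z$ to a uniform lower bound nearby—is shorter and entirely adequate for the stated conclusion $\delta=\delta(u)$, but it forfeits the intermediate uniformity in $u$ that the barrier method delivers (and which can matter when one wants estimates uniform over families of supersolutions). One small technical point: the paper's $\nu$ is an arbitrary outward nontangential field, not the unit normal $n$ appearing in your tubular coordinates, so your Taylor expansion should be written with $\partial u/\partial n$; on $Z$ the tangential gradient of $u$ vanishes (boundary minimum of a nonnegative function), whence $\partial u/\partial\nu<0$ and $\partial u/\partial n<0$ are equivalent there, and your compactness step goes through unchanged with $n$ in place of $\nu$.
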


\begin{proof} By Proposition \ref{pr31}, there exist $M>0$ and $\o>0$ such that $h:= e^{M\psi}$ is a positive strict
supersolution of $(\mathcal{P}+\o,\mathfrak{B},Q_T)$. As $u$ is a supersolution of $(\mathcal{P},\mathfrak{B},Q_T)$,
\begin{equation*}
  (\mathcal{P}+\o)u=\mathcal{P}u+\o u \geq \o u >0 \qquad   \hbox{in}\quad Q_T,
\end{equation*}
since $u\geq 0$ and $u\neq 0$  in $\O$. Moreover  $\mathfrak{B}u \geq 0$ on $\p\O$. Thus $u>0$ is a strict supersolution of
$(\mathcal{P}+\o,\mathfrak{B},Q_T)$ and Corollary \ref{co31} establishes \eqref{3.18}  and \eqref{3.19}.
\par
The uniform estimate \eqref{4.1} will follow from \eqref{3.18} and \eqref{3.19}, since $\p\O$ satisfies the uniform decaying property of Hopf along $\p\O$, as discussed in \cite{LG03} and \cite{LG13}. Its proof will be completed in two steps.
\par
First of all, we will  give a uniform estimate for $u(x,t)$  with $x\in B_R(x_0)\subset \O $  for every  $x_0\in \O $ and sufficiently small $R>0$ such that $B_R(x_0)\subset\O$ and $t\in [0,T]$. In such circumstances, we will show that there exists a constant
$M:=M(\mathfrak{L},R)>0$ such that, for every $u\in E$ satisfying
\begin{equation}
\label{4.2}
 \left\{ \begin{array}{ll}u(x,t)> 0 &\quad   x\in B_R (x_0), \quad  t\in [0,T], \\
   \mathcal{P}u(x,t) \geq  0 &\quad x\in B_R(x_0), \quad  t\in [0,T], \end{array}\right.
\end{equation}
the following estimate holds:
\begin{equation}
\label{4.3}
  u(x,t)\geq  \left(M \min_{(x,t)\in \bar B_\frac{R}{2}  (x_0)\times [0,T] } u\right) \mathrm{dist\,}
  (x,\partial B_R(x_0))
\end{equation}
for all $(x,t) \in \bar B_R (x_0)\times[0,T]$. Later, in the second step, we will prove that $\d$ can be chosen to be independent of $R$. \par
To show \eqref{4.3}, let $R>0$ and $x_0\in \Omega$ be such that $B_R (x_0)\subset \Omega$, suppose $u\in E$ satisfies \eqref{4.2} and  consider the auxiliary functions
\begin{equation*}
E(x):=e^{\alpha(R^2-|x-x_0|^2)}, \qquad  v(x):=E(x)-1, \qquad x\in \R^N,
\end{equation*}
where $\alpha >0$ is a constant to be chosen later. The function $v$ satisfies
\begin{equation*}
 \left\{ \begin{array}{lll}v(x)> 0 \quad \hbox{if and only if} &\quad   |x-x_0|< R, \\
   v(x)= 0 \quad \hbox{if and only if} &\quad   |x-x_0|= R,\\ v(x)< 0 \quad \hbox{if and only if} &\quad   |x-x_0|> R. \end{array}\right .
\end{equation*}
Subsequently, we set
\begin{equation*}
   D:=\left\{x\in \R^N : \quad \frac {R}{2}<|x-x_0|< R\right\},
\end{equation*}
\begin{equation*}
c^+:=\mathrm{max\,}  \left\{c,0\right\}\geq c, \quad  \mathfrak{L}^+:= \mathfrak{L}-c+c^+,\quad
\mc{P}^+:=\p_t+\mf{L}^+.
\end{equation*}
Then, since $v(x)$ has been taken to be independent of $t$,
\begin{equation*}
  \mathcal{P^+} v  = \frac {\partial v}{\partial t}+\mathfrak{L^+}v = \mathfrak{L^+}v.
\end{equation*}
Note that
\begin{align*}
   \mathfrak{L^+}v & := -\sum_{i,j=1}^N a_{ij}(x,t) \frac{\partial^2 E }{\partial x_i \p x_j}(x)+
   \sum_{j=1}^N b_j(x,t) \frac{\partial E}{\partial x_j}(x)+ c^+ (x,t)v\\[1ex]
   & = -\sum_{i,j=1}^N a_{ij}(x,t) \frac{\partial^2 E }{\partial x_i \p x_j}(x)\!+\!
   \sum_{j=1}^N b_j(x,t) \frac{\partial E}{\partial x_j}(x)\!+\! c^+ (x,t)E(x)\!-\! c^+(x,t)\\[1ex]
   & = \mf{L}^+E(x)-c^+(x,t).
\end{align*}
Moreover, for every $j\in \left\{1,...,N\right\}$,
\begin{equation*}
\frac{\partial E}{\partial x_j}(x)=-2\alpha (x_j-x_{0j}) e^{\alpha (R^2-|x-x_0|^2)},
\end{equation*}
 \begin{equation*}
\frac{\partial^2 E}{\partial x_j^2}(x)=[4\alpha^2 (x_j-x_{0j})^2-2\alpha] e^{\alpha(R^2-|x-x_0|^2)},
\end{equation*}
while, for every $i, j\in \left\{1,...,N\right\}$,  with $i\neq j$, and $x\in R^N$,
\begin{equation*}
\frac{\partial^2 E}{\partial x_i \p xj}(x)=4\alpha^2 (x_i-x_{0i})(x_j-x_{0j}) e^{\alpha(R^2-|x-x_0|^2)}.
\end{equation*}
Thus, rearranging terms yields
\begin{align*}
   \mathcal{P^+}v & = \Big\{ -4\alpha^2\sum_{i,j=1}^N a_{ij}(x_i-x_{0i})(x_j-x_{0j})+
  2\alpha \sum_{j=1}^N [a_{jj}-b_j(x_j-x_{0j})] \cr &\hspace{7cm} + c^+ (x,t)\Big \}E(x)-c^+(x,t) \cr &
  =\Big\{ -4\alpha^2 (x-x_0)^T A_{(x,t)}(x-x_0)+2\alpha \left[\mathrm{tr\,} A_{(x,t)}-\langle b(x,t),x-x_0\rangle \right]
  \cr & \hspace{7cm} + c^+ (x,t)\Big\}E(x)-c^+(x,t),
\end{align*}
where $A_{(x,t)}$ is the matrix of the principal coefficients of $ \mathfrak{L}$, $\mathrm{tr\,}A_{(x,t)}$ stands for the trace of $A_{(x,t)}$ and $b:=(b_1,...,b_N)$. Let $\mu >0$ be the ellipticity constant of $ \mathfrak{L}(x,t)$ in $\O \times [0,T]$. Then, for every $x\in D$ and  $t\in [0,T]$
\begin{equation*}
   (x-x_0)^T A_{(x,t)}(x-x_0)\geq \mu |x-x_0|^2 \geq \mu \frac{R^2}{4}.
\end{equation*}
Moreover,
\begin{align*}
\left|\mathrm{tr\,}A_{(x,t)}-\langle b(x,t),x-x_0\rangle \right| & \leq \left|\mathrm{tr\,} A_{(x,t)}\right|+|b(x,t)||x-x_0|\\ & \leq \left|\mathrm{tr\,} A_{(x,t)}\right|+|b(x,t)|R.
\end{align*}
Since $a_{ij}, b_j, c^+\in F$, $1\leq i, j \leq N$, there exists a constant
\begin{equation*}
C:=C(a_{jj},b_j,R)=C(\mathfrak{L},R)>0
\end{equation*}
such that
\begin{equation*}
\left|\mathrm{tr\,}A_{(x,t)}-\langle b(x,t),x-x_0\rangle \right|\leq C \quad \hbox{for all} \;\;  (x,t)\in D \times [0,T],
\end{equation*}
independently of $x_0$. Thus, for every $x\in D$,
\begin{equation*}
 \mathcal{P^+} v \leq ( -\alpha^2 \mu R^2 + 2\alpha C+c^+)E(x)-c^+(x,t)
\end{equation*}
and hence, there exists $\alpha :=\alpha(\mathfrak{L},R)>0$ such that
\begin{equation}
\label{4.4}
 \mathcal{P^+}v \leq 0 \quad \hbox{for all} \quad (x,t)\in D\times [0,T].
\end{equation}
By  \eqref{4.2}, $u(x,t)>0$ for all $ x\in \bar{B}_{\frac{R}{2}}(x_0)$ and $t\in [0,T]$. Thus, by the continuity of $u$,
\begin{equation*}
 u_R:=\min_{(x,t)\in \bar {B}_{\frac{R}{2}}(x_0)\times [0,T]}u>0.
\end{equation*}
Consider  the auxiliary function
\begin{equation}
\label{4.5}
   w(x,t) :=u(x,t)-\e v(x), \qquad \e := \frac {u_R}{e^{\a R^2}-1},
\end{equation}
which is well defined in $\bar {B}_R(x_0)\times [0,T]$. Since
\begin{equation*}
   u(x,t) \geq u_R  \quad \hbox{and} \quad v\leq v(x_0)=e^{\a R^2}-1 \quad \hbox{in}\;\;
   \bar{B}_\frac{R}{2}(x_0)\times[0,T],
\end{equation*}
it follows from \eqref{4.5} that
\begin{equation*}
    w \geq u_R-\e (e^{\a R^2}-1)=0 \quad \hbox{in} \quad \bar{B}_\frac{R}{2}(x_0)\times[0,T].
\end{equation*}
In particular,
\begin{equation*}
    w(x,t) \geq 0  \qquad \hbox{on} \qquad \partial B_\frac{R}{2}(x_0)\times [0,T].
\end{equation*}
Moreover, since
\begin{equation*}
   v = 0 \quad \hbox{on}\quad \partial B_R(x_0)\quad \hbox{and}\quad u\geq 0 \quad\hbox{in} \quad       \bar {B}_R(x_0)\times [0,T],
\end{equation*}
we also have that
\begin{equation*}
   w(x,t) =u(x,t) \geq 0  \qquad \hbox{on} \qquad \p B_R(x_0)\times [0,T].
\end{equation*}
Summarizing,
\begin{equation}
\label{4.6}
  w(x,t) \geq 0  \quad \hbox{for all} \quad x \in \p D=\partial B_R(x_0)\cup \partial B_\frac{R}{2}(x_0)\quad
  \hbox{and} \quad t\in [0,T].
\end{equation}
Moreover, by  \eqref{4.4},
\begin{equation*}
 \mathcal{P}^+ w =\mathcal{P}^+ u-\e \mathcal{P}^+ v\geq \mathcal{P}^+ u =\mc{P}u +(c^+-c)u.
\end{equation*}
Hence, by \eqref{4.2}, it becomes apparent that
\begin{equation}
\label{4.7}
 \mathcal{P^+} w \geq (c^+-c)u\geq 0 \quad \hbox{in} \quad D\times[0,T],
\end{equation}
because $c^+\geq c$. Subsequently, we will prove that all these properties entail
\begin{equation}
\label{4.8}
  w(x,t) :=u(x,t)-\e v\geq 0 \quad \hbox{in} \quad \bar D\times[0,T].
\end{equation}
Indeed, either $w (x,t)>0$ for all $(x,t)\in \bar D\times [0,T]$, and then \eqref{4.8} holds, or
\begin{equation*}
   m:=\min_{\bar D\times [0,T]}w \leq 0.
\end{equation*}
In such case, by \eqref{4.7}, we may infer from Corollary \ref{co2.1} that $m$ cannot be reached in
$D\times (0,T)$, unless $w\equiv m\leq 0$ in $\bar D\times [0,T]$.
\par
When $m$ is not reached in $D\times (0,T)$, then it must be reached on the boundary and, thanks to \eqref{4.6}, $m=0$ and hence, \eqref{4.8} also holds. Similarly, when $w\equiv m$ in $\bar D\times [0,T]$, again by \eqref{4.6}, $m\geq 0$ and therefore, $m=0$ and
$w\equiv 0$ and \eqref{4.8} holds, i.e.,
\begin{equation*}
   u(x,t)\geq \e v(x) \quad \hbox{for all} \;\; (x,t)\in \bar D\times[0,T].
\end{equation*}
Furthermore, for every  $x \in D$,
\begin{align*}
v(x) & =e^{\a (R^2-|x-x_0|^2)}-1=e^{\a (R-|x-x_0|)(R+|x-x_0|)}-1\cr & \geq e^{\a \frac {3R}{2}(R-|x-x_0|)}-1
\geq \a \tfrac{3R}{2}(R-|x-x_0|)  =\a \tfrac{3R}{2}  \, \mathrm{dist\,}(x, \partial B_R(x_0)).
\end{align*}
Thus, as soon as $(x,t)\in D \times [0,T]$,  we find that
\begin{equation}
\label{4.9}
   u(x,t)\geq \e v(x) \geq \frac {3 \a R}{2(e^{\a R^2}-1)}u_R \, \mathrm{dist\,}(x,\partial B_R(x_0)).
\end{equation}
On the other hand, for every $x\in \bar B_{\frac{R}{2}}(x_0)$,
\begin{equation}
\label{4.10}
   u(x,t)\geq u_R \geq  \frac {u_R}{R} \,\mathrm{dist\,}(x, \partial B_R(x_0)).
\end{equation}
Therefore, setting
\begin{equation*}
M:=\min \left \{ \frac {3 \a R}{2(e^{\a R^2}-1)},\frac {1}{R} \right \},
\end{equation*}
from \eqref{4.9} and \eqref{4.10} it becomes apparent that
\begin{equation}
\label{4.11}
   u(x,t)\geq M u_R \,\mathrm{dist\,}(x,\partial B_R(x_0)) \quad \hbox{for all}\quad (x,t)\in \bar {B}_R(x_0)\times [0,T].
\end{equation}
We are ready to complete the proof of the theorem. By Assumption {\rm (A1)}, $\O$ satisfies the uniform interior sphere property in the strong sense on $\G_0$ with parameter $R>0$ (see \cite[Ch. 1]{LG13}, if necessary).  Subsequently, we will consider the compact subset of $\bar\O$ defined by
\begin{equation*}
   K_R=\left \{(x,t)\in \bar{Q}_T : \, \mathrm{dist\,}(x,\G_0)\geq \tfrac{R}{2} \right\}.
\end{equation*}
We already know that $u$ satisfies \eqref{3.18} and \eqref{3.19} (see the beginning of this proof). In particular, since $\mc{P}u\geq 0$, by \eqref{3.18} we have that
\begin{equation*}
 \left \{ \begin{array}{ll}u(x,t)> 0 &\quad   (x,t)\in (\bar{\O} \setminus \G_0) \times [0,T], \\
   \mathcal{P}u(x,t) \geq  0 &\quad (x,t)\in \O \times [0,T]. \end{array}\right.
\end{equation*}
Thus,
\begin{equation*}
 u_L:=\min_{K_R}u>0.
\end{equation*}
Let $(x,t)\in Q_T$  with $\textrm{dist}(x,\G_0)\leq R$  and consider $y_x \in \G_0$ such that
\begin{equation*}
\textrm{dist}(x,\G_0)=|x-y_x| \quad \hbox{and}  \quad B_R(x_0)\subset \O,
\end{equation*}
where
\begin{equation*}
x_0:=y_x+R \frac {x-y_x}{|x-y_x|}.
\end{equation*}
Since $\bar{B}_{\frac{R}{2}}(x_0) \times [0,T] \subset K_R$, we have that
\begin{equation*}
\min_{\bar{B}_{\frac{R}{2}}(x_0) \times [0,T]}u \geq  u_L.
\end{equation*}
Thus, according to \eqref{4.11}, there exists a constant $M=M(\mathfrak{L},R)$, independent of $x$, such that,
whenever $(x,t)\in Q_T$  with $\, \mathrm{dist\,}(x,\G_0)\leq R$,
\begin{equation*}
   u(x,t)\geq M u_L \, \textrm{dist}(x,\partial B_R(x_0))=Mu_L|x-y_x|=M u_L \textrm{dist}(x,\G_0).
\end{equation*}
Finally, let $(x,t)\in Q_T$ be with $\, \mathrm{dist\,}(x,\G_0)>R$. Obviously,
\begin{equation*}
   u(x,t)\geq \min_{(x,t)\in K_R}  \frac {u(x,t)}{\,  \mathrm{dist\,}(x,\G_0)} {\,  \mathrm{dist\,}}(x,\G_0).
\end{equation*}
Moreover,
\begin{equation*}
   \eta :=\min_{(x,t)\in K_R}  \frac {u(x,t)}{\, \mathrm{dist\,}(x,\G_0)}>0.
\end{equation*}
Thus,  setting
\begin{equation*}
   \d:= \min \left \{\eta ,Mu_L \right\},
\end{equation*}
the estimate \eqref{4.1} holds. This ends the proof.
\end{proof}

\setcounter{equation}{0}

\section{Proof of Theorem \ref{th11}}

This section gives the proof of Theorem \ref{th11}.  Thanks to Theorem  \ref{4.1}, the supersolution $h$ satisfies
\begin{equation}
\label{5.1}
   h(x,t)>0 \qquad \hbox{for all}\quad (x,t)\in \left(\O\cup\G_1\right)\times [0,T],
\end{equation}
and
\begin{equation}
\label{5.2}
  \frac{\p h}{\p \nu}(x,t)<0 \qquad \hbox{for all}\quad   (x,t) \in \left[ h^{-1}(0)\cap \G_0\right]\times [0,T].
\end{equation}
Moreover, there exists $\d>0$ such that
\begin{equation}
\label{5.3}
  h(x,t) \geq \d\, \mathrm{dist\,}(x,\G_0)\qquad\hbox{for all}\;\;  (x,t)\in  Q_T.
\end{equation}
Let $u\in E$ be an arbitrary supersolution of $(\mathcal{P},\mathfrak{B},Q_T)$. Then, $\mathcal{P}u\geq 0$ in $Q_T$ and $\mathfrak{B}u\geq 0$ on $\p\O\times [0,T]$. Suppose $u\geq 0$, $u\neq 0$. Then, thanks again to  Theorem  \ref{4.1}, $u$ satisfies \eqref{3.18} and \eqref{3.19}. Hence, Alternative (b) holds.  Consequently, in case $u\geq 0$ one of the first two alternatives occurs.
\par
To complete the proof of the theorem it remains to prove that Alternative (c) holds if $u$ is somewhere negative. So, suppose
\begin{equation}
\label{5.4}
  u(x_0,t_0)<0 \qquad \hbox{for some}\quad (x_0,t_0)\in \O\times [0,T].
\end{equation}
Subsequently, for every $\l\geq 0$, we consider the function
\begin{equation*}
  v_\l(x,t):= u(x,t)+\l h(x,t), \qquad (x,t)\in\bar Q_T.
\end{equation*}
Naturally, $v_\l \geq 0$ in $\bar Q_T$ for sufficiently large $\l>0$. To prove it, we will argue by contradiction. Assume that for each integer $k\geq 1$ there is
$(x_k,t_k)\in Q_T$ such that
\begin{equation}
\label{5.5}
  v_k(x_k,t_k) = u(x_k,t_k)+k h(x_k,t_k)<0.
\end{equation}
As $\bar Q_T$ is compact, there exist $x_\o\in\bar\O$, $t_\o\in [0,T]$, and a subsequence of $\{k\}_{k\geq 1}$, say $\{k_m\}_{m\geq 1}$, such that
\begin{equation*}
  \lim_{m\to\infty} \left(x_{k_m},t_{k_m}\right)=(x_\o,t_\o).
\end{equation*}
Thanks to \eqref{5.5}, we have that
\begin{equation}
\label{5.6}
  \frac{1}{k_m}u(x_{k_m},t_{k_m})+h(x_{k_m},t_{k_m})<0,\qquad m\geq 1.
\end{equation}
Moreover, by the continuity of $u$ in $\bar Q_T$, it is apparent that
\begin{equation*}
  \lim_{m\to\infty} \frac{1}{k_m} u(x_{k_m},t_{k_m})=0.
\end{equation*}
Thus, letting $m\to\infty$ in \eqref{5.6} yields $h(x_\o,t_\o) \leq 0$. Hence, by \eqref{5.1}, $h(x_\o,t_\o)=0$ with $x_\o \in \G_0$.
In particular, $\G_0\neq \emptyset$. Consequently, the proof of the previous assertion get completed if $\G_0=\emptyset$. Suppose
$\G_0\neq \emptyset$ and, for each $m\geq 1$, let $y_{k_m}\in \G_0$ such that
\begin{equation*}
  \mathrm{dist\,}(x_{k_m},\G_0)=|x_{k_m}-y_{k_m}|.
\end{equation*}
Since $u$ is a supersolution of $(\mathcal{P},\mathfrak{B},Q_T)$, we have that $u\geq 0$ on $\G_0$. Hence,
\begin{equation*}
  -u(x_{k_m},t_{k_m})\leq u(y_{k_m},t_{k_m})-u(x_{k_m},t_{k_m}),\qquad m\geq 1.
\end{equation*}
Thus, since $u\in E$, there exists a constant $L>0$ such that
\begin{equation}
\label{5.7}
  -u(x_{k_m},t_{k_m})\leq L   |y_{k_m}-x_{k_m}|=L \,\mathrm{dist\,}(x_{k_m},\G_0),\qquad   m\geq 1.
\end{equation}
So, combining \eqref{5.6} with \eqref{5.7} yields
\begin{equation*}
  k_m h(x_{k_m},t_{k_m})<L\,\mathrm{dist\,}(x_{k_m},\G_0),\qquad   m\geq 1.
\end{equation*}
Therefore, by \eqref{5.3}, it becomes apparent that
\begin{equation}
\label{5.8}
  \d k_m \mathrm{dist\,}(x_{k_m},\G_0)<L   \,\mathrm{dist\,}(x_{k_m},\G_0), \qquad m\geq 1.
\end{equation}
As \eqref{5.8} implies $\d k_m < L$, $m\geq 1$, which is impossible because $\lim_{m\to\infty}k_m=\infty$, the proof of the existence of a $\l>0$ for which $v_\l \geq 0$ is completed.
\par
Subsequently, we will denote by $\Lambda$  the set
\begin{equation*}
  \L := \{ \l > 0\;:\; v_\l\geq 0\}.
\end{equation*}
We have just shown that $\L\neq \emptyset$. Moreover, by \eqref{5.4}, $\l \not \in \L$ for sufficiently
small $\l\geq 0$, and, since $h\geq 0$, it becomes apparent that $[\l,\infty)\subset \L$ if $\l\in\L$. Therefore,
\begin{equation*}
  \mu:= \inf \L > 0.
\end{equation*}
It is clear that $v_\mu \geq 0$, by continuity. Moreover,
\begin{align*}
  \mathcal{P}v_\mu & = \mathcal{P}u+\mu \mathcal{P}h \geq 0
  \quad \hbox{in}\quad Q_T, \cr
    \mathfrak{B}v_\mu & = \mathfrak{B}u+\mu \mathfrak{B}h \geq 0
  \quad \hbox{on}\quad \p\O\times [0,T].
\end{align*}
Thus, $v_\mu$ is a non-negative supersolution of $(\mathcal{P},\mathfrak{B},Q_T)$ in $E$. According to Corollary \ref{co31}, either
\begin{equation}
\label{5.9}
  v_\mu = 0,
\end{equation}
or
\begin{equation}
\label{5.10}
\left\{ \begin{split}
  & v_\mu(x,t)>0 \qquad \forall\, (x,t)\in \left( \O\cup\G_1\right)\times [0,T], \cr
  & \p_\nu v_\mu (x,t)<0 \quad
  \forall \,(x,t) \in \left[ v_{\mu}^{-1}(0)\cap \G_0\right]\times [0,T].
\end{split}\right.
\end{equation}
Moreover, thanks to Theorem \ref{th41}, when \eqref{5.10} occurs, there exists $\d=\d(\mu)$ such that
\[
  v_\mu(x,t)\geq \d(\mu)\mathrm{dist\,}(x,\G_0)\quad \hbox{for all}\;\; (x,t)\in \bar Q_T.
\]
Suppose \eqref{5.9} holds. Then, setting $m:=-\mu<0$ yields $u=m h$ and Alternative (c) holds; \eqref{1.7} follows very easily using the fact that $u$ and $h$ are supersolutions of
$(\mathcal{P},\mathfrak{B},Q_T)$. So, to complete the proof of the theorem it suffices to show that \eqref{5.10} contradicts the minimality of $\mu$. Indeed, by definition of $\mu$, for each $k\geq 1$ there exists
$(x_k,t_k)\in Q_T$ such that
\begin{equation}
\label{5.11}
  v_{\mu-\frac{1}{k}}(x_k,t_k)=u(x_k,t_k)+\left( \mu-\frac{1}{k}\right)
  h(x_k,t_k)=v_\mu(x_k,t_k)-\frac{h(x_k,t_k)}{k} <0.
\end{equation}
Arguing as above, there exist $(x_\o,t_\o)\in\bar Q_T$ and a subsequence of
$\{k\}_{k\geq 1}$, $\{k_m\}_{m\geq 1}$, such that
\begin{equation*}
  \lim_{m\to\infty} \left(x_{k_m},t_{k_m}\right)=(x_\o,t_\o).
\end{equation*}
Thanks to \eqref{5.11},
\begin{equation}
\label{5.12}
  v_\mu(x_{k_m},t_{k_m})<\frac{h(x_{k_m},t_{k_m})}{k_m},\qquad m\geq 1.
\end{equation}
On the other hand, by the continuity of $h$ in $\bar Q_T$,
\begin{equation*}
  \lim_{m\to\infty}\frac{h(x_{k_m},t_{k_m})}{k_m}=0.
\end{equation*}
Thus, letting $m\to\infty$ in \eqref{5.12} shows that $v_\mu(x_\o,t_\o)\leq 0$. Therefore,
$v_\mu(x_\o,t_\o)= 0$ and hence, due to \eqref{5.10}, we find that
\begin{equation*}
 (x_\o,t_\o) \in \left[ v_\mu^{-1}(0)\cap \G_0\right] \times [0,T].
\end{equation*}
As above,  this entails $\G_0\neq \emptyset$ and ends the proof if $\G_0=\emptyset$. So, suppose $\G_0\neq \emptyset$, and, for each $m\geq 1$, let $y_{k_m}\in \G_0$ such that
\begin{equation*}
 \mathrm{dist\,}(x_{k_m},\G_0)=|x_{k_m}-y_{k_m}|.
\end{equation*}
Arguing as above shows that \eqref{5.8} holds. On the other hand, by \eqref{5.11}, we have that
\begin{equation*}
  -u(x_{k_m},t_{k_m}) > \left( \mu-\frac{1}{k_m}\right) h(x_{k_m},t_{k_m}),\qquad m\geq 1.
\end{equation*}
So, according to \eqref{5.7}, we find that
\begin{equation*}
  \left( \mu-\frac{1}{k_m}\right)    h(x_{k_m},t_{k_m})< L \,\mathrm{dist\,}(x_{k_m},\G_0),
  \qquad m\geq 1.
\end{equation*}
Thus,  since $\lim_{m\to\infty}k_m=\infty$, for sufficiently large $m\geq 1$ we obtain that
\begin{equation*}
   h(x_{k_m},t_{k_m}) < \frac{L k_m}{\mu k_m-1}\,\mathrm{dist\,}(x_{k_m},\G_0).
\end{equation*}
Consequently, going back to \eqref{5.11}, we find that for sufficiently large $m\geq 1$,
\begin{align*}
  0 & > v_\mu(x_{k_m},t_{k_m})-\frac{h(x_{k_m},t_{k_m})}{k_m} \cr & >
  \d(\mu) \, \mathrm{dist\,}(x_{k_m},\G_0)-\frac{L }{\mu
   k_m-1}\,\mathrm{dist\,}(x_{k_m},\G_0) \\ & =
   \left( \d(\mu) - \frac{L }{\mu   k_m-1}\right) \,\mathrm{dist\,}(x_{k_m},\G_0)
\end{align*}
because $v_\mu$ is a positive supersolution of $(\mathcal{P},\mathfrak{B},Q_T)$ satisfying \eqref{5.10}.
As the previous inequality cannot be satisfied, because it entails $\mathrm{dist\,}(x_{k_m},\G_0)<0$ for sufficiently large $m\geq 1$, the proof is completed.

\section{The maximum principle for cooperative systems}

\noindent  Throughout this section,  for every $k\in \{1,...,M\}$, we will suppose that $\p \O$ consists of two disjoint open and closed subsets, $\G_{0,k}$ and $\G_{1,k}$,
\begin{equation*}
  \p \O :=\G_{0,k}\cup \G_{1,k}.
\end{equation*}
As they are disjoint, $\G_{0,k}$ and $\G_{1,k}$ must be of class  $\mc{C}^{2+\t}$ for all $k\in \{1,...,M\}$. Then, for every $k\in\{1,...,M\}$, we consider the boundary operators
\begin{equation*}
    \mathfrak{B}_k: \mathcal{C}(\G_{0,k})\oplus\mathcal{C}^1(\O\cup \G_{1,k}) \to \mc{C}(\p\O)
\end{equation*}
defined by
\begin{equation}
\label{vi.1}
    \mathfrak{B}_k \xi := \left\{ \begin{array}{ll}
    \xi \qquad & \hbox{on } \;\;\G_{0,k} \\     \frac{\p \xi}{\p \nu} + \b_k(x) \xi \qquad &
   \hbox{on } \;\; \G_{1,k}  \end{array} \right.
\end{equation}
for each $\xi\in \mathcal{C}(\G_{0,k})\oplus \mathcal{C}^1(\O\cup\G_{1,k})$, where the coefficients $\b_k\in \mathcal{C}^{1+\t}(\G_{1,k})$, $1\leq k\leq M$, can change of sign,  and $\nu =(\nu_1,...,\nu_N)\in \mathcal{C}^{1+\t}(\p\O;\R^N)$ is an outward pointing nowhere tangent vector field. When $\G_{1,k}=\emptyset$
for some $k \in \{1,...,M\}$, $\mf{B}_k$ will be simply denoted by $\mf{D}_k$, the Dirichlet operator. Eventually, we will set
\[
  \mc{B}:=\left(\mf{B}_1,...,\mf{B}_M\right), \quad \mc{D}:=\left(\mf{D}_1,...,\mf{D}_M\right).
\]
For a given $T>0$, we also consider the next non-autonomous differential operators
\begin{equation}
\label{vi.2}
  \mc{A}_k:=\mc{A}_k(x,t;D):= -\sum_{i,j=1}^N a_{ij}^k(x,t)\frac{\p^2}
  {\p x_i \p x_j}+\sum_{j=1}^N b_j^k(x,t) \frac{\p}{\p x_j},
\end{equation}
where $a_{ij}^k=a_{ji}^k, b_j^k \in F$ for all $i, j \in\{1,...,N\}$ and $k \in\{1,...,M\}$, as well as the associated parabolic operators
\begin{equation}
\label{vi.3}
    \mc{P}_k:=\p_t +\mc{A}_k(x,t;D),\qquad    1 \leq k\leq M.
\end{equation}
Finally, we  consider the cooperative matrix of order $M$,
\begin{equation*}
  \mathcal{C}(x,t)=\left(c_{ij}(x,t)\right)_{i,j=1,...,M}
\end{equation*}
 where $c_{ij}\in F$ for every $i, j \in \{1,...,M\}$; by cooperative it is meant that
\begin{equation}
\label{vi.4}
  c_{ij}(x,t)>0 \quad \hbox{for every} \; \; i, j \in\{1,...,M\}, \;\; i\neq j, \quad (x,t)\in\bar Q_T.
\end{equation}
The main goal of this section is generalizing Theorem 2.2 of \cite{ALG} to cover the more general periodic parabolic problem
\begin{equation}
\label{vi.5}
\left\{ \begin{array}{ll}
    \mc{P}_k u_k=\displaystyle{\sum_{j=1}^M\ c_{kj} u_j+f_k} & \quad\hbox{in}\quad Q_T\equiv\O \times[0,T], \\
    \mathfrak{B}_ku_k = 0 & \quad\hbox{on}\quad \p\O\times[0,T], \\
 \end{array}\right. \quad 1\leq k \leq M,
\end{equation}
where $f_k\in F$, $1\leq k \leq M$,   are given functions. In particular, we will characterize whether or not \eqref{vi.5}
satisfies the strong maximum principle. Note that Theorem 2.2 of \cite{ALG} was established in the very special case when $\G_{1,k}=\emptyset$ for all $k\in\{1,...,M\}$.
\par
As in \cite{ALG}, we consider the Banach spaces
\[
  U=E_\mc{B}:=E_{\mf{B}_1}\times \cdots\times E_{\mf{B}_M}, \qquad V:=F^M,
\]
as well as the operator
\begin{equation*}
  \mc{P}:=\left(\mc{P}_1,...,\mc{P}_M\right):U\rightarrow V
\end{equation*}
defined by
\begin{equation*}
  \mc{P}  u =\left( \mc{P}_1 u_1,...,\mc{P}_M u_M\right)^\mathrm{T},\quad u=(u_1,...,u_M)^\mathrm{T} \in U,
\end{equation*}
where $T$ stands for \lq\lq transposition\rq\rq, so that \eqref{vi.5}  can be expressed in the compact way
\begin{equation}
\label{vi.6}
   \mc{P} u = \mc{C}(x,t) u+ f(x,t) ,\quad u \in U,
\end{equation}
where
\begin{equation*}
   f =(f_1,...,f_M)^\mathrm{T} \in V.
\end{equation*}
Let us denote by $P_E$ and $P_F$  the cones of positive functions of $E$ and $F$, respectively.
Subsequently, the  Banach spaces $E$, $F$, $U$ and  $V$ are viewed as ordered Banach spaces with cones   $P_E$, $P_F$,
\[
  P_U:=P_{E_{\mf{B}_1}}\times \cdots \times P_{E_{\mf{B}_M}}\quad \hbox{and}\quad
  P_V:=P_F\times \cdots \times P_F,
\]
respectively. Given an arbitrary ordered Banach space, $X$, with cone $P_X$, $\mathrm{int\,} P_X$ stands for the interior of $P_X$. Moreover, given $x, y \in X$ it is said that $x>y$ if $x-y \in P_X\setminus\{0\}$, while we will write $x\gg y$ if
$x-y \in \mathrm{int\,}P_X$. It is folklore that, for every $k\in \{1,...,M\}$,
$\mathrm{int\,}P_{E_{\mathfrak{B}_k}}$ consists of the set of functions $u\in E_{\mathfrak{B}_k}$ such that
$u(x,t)>0$ for all $(x,t)\in (\O\cup \G_{1,k})\times [0,T]$ and $\frac{\p u}{\p \nu}(x,t)<0$ for all $(x,t)\in \G_{0,k} \times [0,T]$.
Note that $u=0$ on $\G_{0,k}\times [0,T]$ for all $u \in E_{\mf{B}_k}$. Moreover,
\begin{equation*}
  \mathrm{int\,} P_U=  \mathrm{int\,} P_{E_{\mathfrak{B}_1}}\times \cdots \times  \mathrm{int\,} P_{E_{\mathfrak{B}_M}}.
\end{equation*}
The main result of this section is the next one.

\begin{theorem}
\label{th6.1}
 The following conditions are equivalent:
\begin{itemize}
\item[{\rm (C1)}] There exits $\Psi \in \mathrm{int\, } P_U$ such that $\mc{P} \Psi >\mc{C}\Psi$.
\item[{\rm (C2)}] The operator $(\mc{P}-\mc{C})^{-1} :V\rightarrow V$ is well defined, compact and strongly positive, in the sense that $f \in V$, $f>0$, implies $(\mc{P}-\mc{C})^{-1}f \gg 0$.
\item[{\rm (C3)}] The problem \eqref{vi.6} satisfies the strong maximum principle, in the sense that
\[
  f=(f_1,...,f_M)^T>0\;\; \hbox{implies}\;\; u \gg 0\;\; \hbox{in}\;\; U.
\]
\item[{\rm (C4)}] The problem \eqref{vi.6} satisfies the maximum principle, in the sense that
\[
  f=(f_1,...,f_M)^T\geq 0\;\; \hbox{implies}\;\; u \geq 0\;\; \hbox{in}\;\; U,
\]
for all solution, $u$, of \eqref{vi.6}.
\item[{\rm (C5)}] The eigenvalue problem
\begin{equation}
\label{vi.7}
   \mc{P} \phi = \mc{C}(x,t) \phi+ \sigma \phi ,\quad \phi \in U,
\end{equation}
  admits a positive  eigenvalue, $\s [\mc{P}-\mc{C}]$,  associated to a positive eigenfunction, $\Phi \in \mathrm{int\,} P_U$, which is unique up to a multiplicative constant.
\end{itemize}
Moreover, if one of these conditions holds, the eigenvalue $\s [\mc{P}-\mc{C}]$ is simple and there is no any other eigenvalue of  \eqref{vi.7} associated to a positive eigenfunction.
\par
Finally, for each $p \in P_U\setminus\{0\}$, the equation
\begin{equation}
\label{6.8}
\lambda u-(\mc{P}-\mc{C})^{-1} u= p,
\end{equation}
has a unique positive solution, $ u\in \mathrm{int\,}P_U$, if
\[
  \lambda > \mathrm{spr\,}(\mc{P}-\mc{C})^{-1}= \frac{1}{\s [\mc {P}-\mc{C}]},
\]
and no positive solution if
\[
  \lambda \leq \mathrm{spr\,}(\mc{P}-\mc{C})^{-1}=\frac {1}{\s [\mc {P}-\mc{C}]}.
\]
\end{theorem}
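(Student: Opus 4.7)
My plan is to establish the five conditions equivalent by the cyclic chain (C1) $\Rightarrow$ (C2) $\Rightarrow$ (C3) $\Rightarrow$ (C4) $\Rightarrow$ (C5) $\Rightarrow$ (C1). The implications (C2) $\Rightarrow$ (C3) and (C3) $\Rightarrow$ (C4) are immediate from the definitions, and (C5) $\Rightarrow$ (C1) is obtained by taking $\Psi := \Phi$, for which $(\mc{P}-\mc{C})\Phi = \s[\mc{P}-\mc{C}]\,\Phi \gg 0$ in $V$. The core implications are therefore (C1) $\Rightarrow$ (C2) and (C4) $\Rightarrow$ (C5), after which the final Krasnoselskij-type statement follows from standard spectral radius arguments.

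For (C1) $\Rightarrow$ (C2), I would choose $\omega > 0$ large enough so that the perturbed matrix $\mc{C}_\omega := \mc{C}+\omega I$ has all non-negative entries and, by Theorem \ref{th32}, each scalar operator $\mc{P}_k + \omega$ admits a compact strongly positive resolvent $\mf{T}_k : F \to E_{\mf{B}_k}$. Setting $\mf{T} := \mathrm{diag}(\mf{T}_1,\ldots,\mf{T}_M)$ and $\mc{K} := \mf{T}\,\mc{C}_\omega$, the system $(\mc{P}-\mc{C})u = f$ is equivalent to $(I-\mc{K})u = \mf{T} f$, and $\mc{K}$ becomes a compact, strongly positive operator on $U$. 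Condition (C1) is then recast as $\Psi - \mc{K}\Psi \in \mathrm{int\,} P_U$ (after one application of $\mc{K}$ if necessary, to spread positivity into every component via the off-diagonal cooperative coupling and the strong positivity of each $\mf{T}_k$). A standard Krasnoselskij-type argument now yields $\mathrm{spr}(\mc{K}) < 1$, so that $(I-\mc{K})^{-1} = \sum_{n \geq 0}\mc{K}^n$ is compact and strongly positive; hence so is $(\mc{P}-\mc{C})^{-1} = (I-\mc{K})^{-1}\mf{T}$, proving (C2).

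For (C4) $\Rightarrow$ (C5), I would combine the maximum principle with the Fredholm alternative for the compact perturbation $\mc{K}$ constructed above to deduce that $T := (\mc{P}-\mc{C})^{-1}$ is a well-defined, compact, and strongly positive operator on $V$. The generalized Krein--Rutman theorem recalled in Section 6 then furnishes a simple eigenvalue $r := \mathrm{spr}(T) > 0$ with a strictly dominant and (up to scaling) unique positive eigenfunction $\Phi \in \mathrm{int\,} P_U$. Setting $\s[\mc{P}-\mc{C}] := 1/r$ yields (C5); algebraic simplicity, strict dominance, and the non-existence of any other eigenvalue with a positive eigenfunction are standard consequences of strong positivity.

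For the final statement concerning \eqref{6.8}, suppose first $\lambda > r = 1/\s[\mc{P}-\mc{C}]$. The Neumann series $u := \sum_{n \geq 0}\lambda^{-(n+1)} T^n p$ converges in $U$, belongs to $\mathrm{int\,} P_U$ by strong positivity of $T$ applied to $p > 0$, and solves \eqref{6.8} uniquely since $\lambda \notin \mathrm{spec}(T)$. For $\lambda \leq r$, I would apply Krein--Rutman to the dual operator $T^*$ to obtain a positive dual eigenfunction $\Phi^*$ with $T^*\Phi^* = r\Phi^*$ and $\langle u, \Phi^* \rangle > 0$ for every $u \in \mathrm{int\,} P_U$; a hypothetical positive solution $u$ of \eqref{6.8} would then satisfy $(\lambda - r)\langle u,\Phi^*\rangle = \langle p,\Phi^*\rangle > 0$, incompatible with $\lambda \leq r$ and $\langle u,\Phi^*\rangle \geq 0$. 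The main technical obstacle throughout is the careful verification of the Krasnoselskij lemma and of the scalar-to-system strong-positivity transfer in the presence of the mixed boundary operators $\mf{B}_k$, which may differ from component to component; however, this reduction has been largely prepared by Theorems \ref{th12} and \ref{th32} combined with the machinery developed in Sections 3--5.
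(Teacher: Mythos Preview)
Your cycle is correct and your argument for {\rm (C1)} $\Rightarrow$ {\rm (C2)} is genuinely different from the paper's. The paper does \emph{not} perform a single global shift to obtain a nonnegative coupling matrix $\mc{C}_\omega$ and a single compact strongly positive operator $\mc{K}=\mf{T}\mc{C}_\omega$; instead it inverts only the diagonal blocks $(\mc{P}_i-c_{ii})$ (after using {\rm (C1)} together with Theorem~\ref{th12} to ensure $\lambda_1[\mc{P}_i-c_{ii},\mf{B}_i,Q_T]>0$) and then runs a Gauss--type elimination, substituting one equation into the others and invoking Theorem~\ref{thKR}(e) at every step to peel off one unknown at a time. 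Your route is considerably shorter, and once you observe that $\Psi-\mc{K}\Psi=\mf{T}f>0$ (no extra application of $\mc{K}$ is needed: some $f_j>0$ already forces $\mf{T}f\neq 0$), Theorem~\ref{thKR}(e) gives $\mathrm{spr\,}\mc{K}<1$ in one shot. What the paper's longer scheme buys is an explicit componentwise resolvent representation $u_i=\sum_j\mc{R}_{ij}f_j$ with each $\mc{R}_{ij}$ individually strongly positive, which your Neumann series does not display so transparently. One technical point: $\mc{K}$ is not obviously compact as a map $U\to U$; the paper works in the intermediate spaces $X_i:=C^{1,1/2}_T$ precisely because $E_{\mf{B}_i}\hookrightarrow X_i$ is compact, and you should do the same (or work in $V$).

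There is, however, a real gap in your {\rm (C4)} $\Rightarrow$ {\rm (C5)}. Condition {\rm (C4)} gives only that $(\mc{P}-\mc{C})^{-1}$ is \emph{positive}; you assert strong positivity without argument, and the Krein--Rutman theorem you then invoke (Theorem~\ref{thKR}) requires strong positivity as a hypothesis. The paper closes the cycle differently, proving {\rm (C4)} $\Rightarrow$ {\rm (C1)} by a bootstrap: it picks $p\gg 0$, sets $\Psi:=(\mc{P}-\mc{C})^{-1}p\geq 0$, shifts each scalar equation by a large constant $\lambda$ so that $(\mc{P}_k-c_{kk}+\lambda)^{-1}$ is strongly positive, and reads off $\psi_k\in\mathrm{int\,}P_{E_{\mf{B}_k}}$ from $\psi_k=(\mc{P}_k-c_{kk}+\lambda)^{-1}\big(\lambda\psi_k+p_k+\sum_{j\neq k}c_{kj}\psi_j\big)$. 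With your own machinery the fix is even shorter: if $u=(\mc{P}-\mc{C})^{-1}f$ with $f>0$, then $u\geq 0$ by {\rm (C4)} and $u\neq 0$ (else $f=0$), so $\mc{K}u\in\mathrm{int\,}P_U$ by strong positivity of $\mc{K}$, and $u=\mc{K}u+\mf{T}f\in\mathrm{int\,}P_U+P_U=\mathrm{int\,}P_U$. You should insert this line; without it the implication is unproved.

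Your treatment of \eqref{6.8} via Neumann series and the dual eigenfunctional is standard and equivalent to the paper's direct appeal to parts (d) and (e) of Theorem~\ref{thKR}.
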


\begin{proof}

The proof of this theorem is based on the next generalized version of the Krein--Rutman theorem, \cite{KR}.

\begin{theorem}
\label{thKR}
 Let $(X,\|\cdot\|,P)$ be an ordered  Banach space with $\mathrm{int\,}P\neq \emptyset$ and $T \in \mc{K}(X)$ a compact operator
 such that it is strongly positive in the sense that
\[
  T(P\setminus\{0\})\subset \mathrm{int\,}P.
\]
Then,
\begin{enumerate}
\item[{\rm (a)}] $\mathrm{spr\,}T>0$ is an algebraically simple eigenvalue of $T$ with
\[
  N[\mathrm{spr\,}T I-T] =\mathrm{span\,}[x_0]
\]
for some $x_0\in \mathrm{int\,}P$.
\item[{\rm (b)}] $\mathrm{spr\,}T$ is the unique real eigenvalue to an eigenvector in $P\setminus\{0\}$.
\item[{\rm (c)}] $\mathrm{spr\,}T$ is the unique eigenvalue of $T$ in the spectral circle
\[
  |\zeta|=\mathrm{spr\,}T.
\]
In other words,
\[
  |\l|<\mathrm{spr\,}T \;\; \hbox{for all}\;\; \l \in \sigma(T)\setminus \{\mathrm{spr\,}T\}.
\]
\item[{\rm (d)}] For every real number $\l > \mathrm{spr\,}T$, the resolvent operator
\[
  \mc{R}(\l,T):= (\l I-T)^{-1}\in \mc{L}(X)
\]
is strongly positive, i.e.,
\[
  \mc{R}(\l,T)(P\setminus\{0\})\subset \mathrm{int\,} P.
\]
\item[{\rm (e)}] Conversely, for every $p \in P\setminus\{0\}$, the equation
\[
  \l x - T x = p
\]
cannot admit a positive solution for $\l \leq \mathrm{spr\,} T$.
\end{enumerate}
\end{theorem}
\vspace{0.2cm}

See \cite[Th. 6.3]{LG13} for a
complete statement and the proof of Parts (a)-(d), as well as \cite[Th. 3.2(iv)]{Amann} for Part (e).
The original theorem of  Krein and  Rutman \cite{KR}  reads as follows.

\begin{theorem}
Let $X$ be an ordered Banach space with total positive cone $P$, and let $T$ be a compact positive endomorphism of $X$. If $T$
has a spectral radius $\mathrm{spr\,}T>0$, then $\mathrm{spr\,}T$ is a pole of the resolvent of maximal order on the spectral circle
\[
 |\zeta|=\mathrm{spr\,}T,
\]
with an eigenvector in $P$. A corresponding result holds for the adjoint $T^*$ of $E'$.
\end{theorem}

Thus, Theorem \ref{thKR} is substantially sharper. The interested readers are sent to \cite[Ch. 6]{LG13} for a more detailed discussion.
\vspace{0.2cm}

We are ready to prove Theorem \ref{th6.1}.
First, we will show ${\rm (C1)} \Rightarrow {\rm (C2)}$. Let
\begin{equation*}
   \Psi =(\psi_1,...,\psi_M)\in \mathrm{int\,}\mc {P}_U= \mathrm{int\,} P_{E_{\mathfrak{B}_1}}\times \cdots \times  \mathrm{int\,} P_{E_{\mathfrak{B}_M}}
\end{equation*}
be such that
\begin{equation*}
  \mc{P} \Psi >\mc{C} \Psi.
\end{equation*}
Then, $\psi_i \in E_{\mf{B}_i}$ satisfies $\psi_i\gg 0$, i.e., $\psi_i\in \mathrm{int\,} P_{E_{\mathfrak{B}_i}}$, and, in particular,
$\psi_i \in X_i$, where $X_i$ stands for the Banach space of all $T$-periodic functions in
$\mathcal{C}^{1,\frac{1}{2}}(\bar\O\times \R;\R)$ such that $\mf{B}_i \psi_i =0$ on $\p\O\times \R$. Note that
$\psi_i\gg 0$ if and only if $\psi_i(x,t)>0$ for all $x\in\O\cup \G_{1,i}$ and $t\in\R$, and $\p_\nu \psi_i(x,t)<0$ for all $x\in\G_{0,i}$ and $t\in\R$. These Banach spaces can be ordered by their respective cones of non-negative functions
\[
  P_{X_i}:=\{u\in X_i\;:\;\; u\geq 0 \;\;\hbox{in}\;\; \bar Q_T\},\qquad 1\leq i \leq M.
\]
Moreover, $E_{\mf{B}_i}$ is compactly embedded in $X_i$ for all $1\leq i \leq M$, which provides us with
the compactness of most of the operators arising in this proof.
Since $\psi_i\gg 0$ in $\bar Q_T$,
\begin{equation*}
 \mc{P}_i \psi_i \geq \sum_{j=1}^M\ c_{ij} \psi_j > c_{ii} \psi_i \qquad  \hbox{for every}\;\;\; i\in\{1,...,M\}.
\end{equation*}
Thus, $\psi_i$ provides us with a positive strict
supersolution of $(\mc{P}_i - c_{ii},\mathfrak{B}_i,Q_T)$ and, thanks to Theorem \ref{th12},
\[
   \l_1[\mc{P}_i - c_{ii},\mathfrak{B}_i,Q_T]>0,\qquad 1\leq i \leq M.
\]
Moreover, by parabolic regularity, for every $i\in\{1,...,M\}$,
\begin{equation*}
 (\mc{P}_i - c_{ii} )^{-1} : F \rightarrow X_i
\end{equation*}
is compact and strongly order preserving, because
\[
  (\mc{P}_i - c_{ii} )^{-1} (F)\subset E_{\mf{B}_i},
\]
the injection $E_{\mf{B}_i}\hookrightarrow X_i$ is compact, and $(\mc{P}_i - c_{ii},\mathfrak{B}_i,Q_T)$ satisfies the strong
maximum principle.
\par
Subsequently, we consider the vectorial function
\begin{equation*}
   f :=  \mc{P}\Psi - \mc {C} \Psi.
\end{equation*}
By hypothesis, $f >0$. And the $i$-th equation of the previous identity establishes that
\[
  f_i = (\mc{P}_i-c_{ii})\psi_i- \sum_{\substack{j=1\cr j\neq i}}^M c_{ij}\psi_j, \qquad 1 \leq i \leq M.
\]
Consequently, acting $(\mc{P}_i - c_{ii} )^{-1}$ on this identity yields
\begin{equation}
\label{6.9}
\psi_i=\sum_{\substack{j=1\cr j\neq i}}^M (\mc{P}_i-\ c_{ii})^{-1}(c_{ij}\psi_j)+(\mc{P}_i-\ c_{ii})^{-1}f_i, \qquad
1\leq i \leq M.
\end{equation}
Since $f>0$,  without loss of generality, we can assume that
\begin{equation*}
  f_M>0.
\end{equation*}
It suffices to reorder and relabel each of the components, if necessary.
\par
We begin by proving (C2) in the special case when $M=2$. In this case, the second equation of \eqref{6.9} becomes
\begin{equation}
\label{6.x}
    \psi_2=(\mc{P}_2-\ c_{22})^{-1}(c_{21}\psi_1)+(\mc{P}_2-\ c_{22})^{-1}f_2.
\end{equation}
Hence, substituting this identity into the first equation of  \eqref{6.9} yields
\begin{equation}
\label{6.xi}
\begin{split}
  \psi_1 & = (\mc{P}_{1}-c_{11})^{-1}\left( c_{12} \psi_2\right) + (\mc{P}_{1}-c_{11})^{-1} f_1 \cr
         & = (\mc{P}_{1}-c_{11})^{-1}\left( c_{12} (\mc{P}_2-\ c_{22})^{-1}(c_{21}\psi_1) \right) \cr & \qquad\;\;\; +
         (\mc{P}_{1}-c_{11})^{-1}\left( c_{12} (\mc{P}_2-\ c_{22})^{-1} f_2 \right)+ (\mc{P}_{1}-c_{11})^{-1} f_1.
\end{split}
\end{equation}
Thus, setting
\begin{align*}
  \mc{T}_{11}^1 & = (\mc{P}_{1}-c_{11})^{-1} ( c_{12} (\mc{P}_2-\ c_{22})^{-1}(c_{21}\, \cdot )) \cr
  \mc{K}_{11}^1 & = (\mc{P}_{1}-c_{11})^{-1} \cr
  \mc{K}_{12}^1 & = (\mc{P}_{1}-c_{11})^{-1}\left( c_{12} (\mc{P}_2-\ c_{22})^{-1} \, \cdot \right)
\end{align*}
the identity \eqref{6.xi} can be equivalently expressed as
\begin{equation}
\label{6.xii}
  \psi_1 =  \mc{T}_{11}^1 \psi_1 + \mc{K}_{11}^1 f_1 + \mc{K}_{12}^1 f_2.
\end{equation}
Since the operators $\mc{T}_{11}^1, \mc{K}_{11}^1, \mc{K}_{12}^1 : F \to X_1$  are compact and strongly positive, and $f_1\geq 0$, $f_2>0$, we find that
\[
  \psi_1 > \mc{T}_{11}^1 \psi_1
\]
and hence, thanks to Theorem \ref{thKR}(e),
\begin{equation}
\label{6.xiii}
  \mathrm{spr\,} \mc{T}_{11}^1 < 1
\end{equation}
because $\psi_1>0$. To infer \eqref{6.xiii} we are applying Theorem \ref{thKR}(e) to the operator
$\mc{T}_{11}^1$ in the Banach space $X_1$ ordered by $P_{X_1}$. According to \eqref{6.xiii}, by Theorem \ref{thKR}(d), the compact operator
\[
  (I_{X_1}-\mc{T}_{11}^1)^{-1}:X_1\to X_1
\]
is strongly positive and \eqref{6.xii} provides us with
\begin{equation}
\label{6.xiv}
  \psi_1= \left(I_{X_1}-\mc{T}_{11}^1\right)^{-1}\mc{K}_{11}^1 f_1+ \left(I_{X_1}-\mc{T}_{11}^1\right)^{-1}\mc{K}_{12}^1 f_2.
\end{equation}
Note that  $\psi_1$ is  uniquely determined by $f=(f_1,f_2)$. Consequently, setting
\[
  \mc{R}_{11}:=\left(I_{X_1}-\mc{T}_{11}^1\right)^{-1}\mc{K}_{11}^1,\qquad
  \mc{R}_{12} := \left(I_{X_1}-\mc{T}_{11}^1\right)^{-1}\mc{K}_{12}^1,
\]
the identity \eqref{6.xiv} can be expressed as
\begin{equation}
\label{6.xv}
  \psi_1= \mc{R}_{11} f_1+ \mc{R}_{12} f_2.
\end{equation}
Moreover, the resolvent operators $\mc{R}_{11}$ and $\mc{R}_{12}$, viewed as operators from $F$ to  $X_1$, are compact and strongly positive (with images in $E_{\mf{B}_1} \hookrightarrow X_1$).
\par
Similarly, substituting \eqref{6.xv} into \eqref{6.x} yields
\begin{align*}
 \psi_2 & =(\mc{P}_2-\ c_{22})^{-1}(c_{21}\psi_1)+(\mc{P}_2-\ c_{22})^{-1}f_2\cr
   &  = (\mc{P}_2-\ c_{22})^{-1}(c_{21}( \mc{R}_{11} f_1+ \mc{R}_{12} f_2))+(\mc{P}_2-\ c_{22})^{-1}f_2\cr
   & = \mc{R}_{21}f_1+\mc{R}_{22}f_2,
\end{align*}
where we have denoted
\begin{align*}
  \mc{R}_{21} & :=  (\mc{P}_2-\ c_{22})^{-1}(c_{21}( \mc{R}_{11} \, \cdot )), \cr
  \mc{R}_{22} & := (\mc{P}_2-\ c_{22})^{-1}(c_{21}( \mc{R}_{12} \, \cdot ))+(\mc{P}_2-\ c_{22})^{-1}.
\end{align*}
Reiterating the previous scheme, it becomes apparent that
\begin{equation}
\label{18J}
\begin{split}
  u_1 & = \mc{R}_{11} f_1+ \mc{R}_{12} f_2,\cr
  u_2 & = \mc{R}_{21} f_1+ \mc{R}_{22} f_2,
\end{split}
\end{equation}
provides us with the unique solution of  $\mc{P}u = \mc{C}u + f$ in $U$ for every $f\in V$. Consequently, (C2) holds in case $M=2$.
\par
Now, we assume that $M>2$. We will show that, for every $i \in \{1,...,M\}$, there are  $M$ compact and strongly positive operators
\begin{equation*}
 \mc{R}_{ij}: F\rightarrow X_i, \;\;\;\; j\in \{1,...,M\},
\end{equation*}
with images in $E_{\mc{B}_i}$, such that
\begin{equation*}
 \psi_i=\sum_{j=1}^M \mc{R}_{ij}f_j.
\end{equation*}
Substituting the last equation of  \eqref{6.9} into the previous ones and rearranging terms, we obtain that
\begin{equation}
\label{6.16}
 \psi_i=\sum_{j=1}^{M-1} \mc{T}_{ij}^1\psi_j+\mc{K}_{ii}^1f_i+\mc{K}_{iM}^1f_M, \qquad 1\leq i\leq M-1,
\end{equation}
for some compact and strongly positive operators
\begin{equation*}
 \mc{T}_{ij}^1, \; \mc{K}_{ii}^1,\; \mc{K}_{iM}^1 : F\rightarrow X_i,\qquad i, j \in \{1,...,M-1\},
\end{equation*}
constructed from $\mc{P}$ and $\mc{C}$, whose explicit expressions are irrelevant in the proof of Theorem \ref{th6.1}.
Note that \eqref{6.16} can be rewritten as
\begin{equation}
\label{6.17}
 \psi_i=\mc{T}_{ii}^1\psi_i+\sum_{\substack{j=1\cr j\neq i}}^{M-1} \mc{T}_{ij}^1\psi_j+\mc{K}_{ii}^1f_i+\mc{K}_{iM}^1f_M,
 \qquad 1\leq i \leq M-1.
\end{equation}
As $f_i \geq 0$, $\psi_i \gg 0$  for each $1\leq i \leq M$, and  $f_M>0$, \eqref{6.17} can be written down as
\begin{equation*}
 \psi_i=\mc{T}_{ii}^1\psi_i+p_i^1
\end{equation*}
for some $p_i^1\in \mathrm{int\, } X_i$, $i\in \{1,...,M-1\}$. Thus, since
$\psi_i \in \mathrm{int\, } X_i$, thanks to Theorem \ref{thKR}(e) applied in $X_i$,
\begin{equation*}
  \mathrm{spr\,}\mc{T}_{ii}^1 <1, \qquad 1 \leq i \leq M-1.
\end{equation*}
Hence, by Theorem \ref{thKR}(d), the compact operators
\begin{equation*}
   ( I_{X_i} - \mc{T}_{ii}^1)^{-1}:X_i\rightarrow X_i,  \qquad i \in \{1,...,M-1\},
\end{equation*}
are strongly positive, where $ I_{X_i} $ stands for the identity of $X_i$. Therefore, we deduce from \eqref{6.17} that
\begin{equation}
\label{6.18}
  \psi_i=( I_{X_i}\! - \!\mc{T}_{ii}^1)^{-1}\left(\sum_{\substack{j=1\cr j\neq i}}^{M-1} \mc{T}_{ij}^1\psi_j\!+\!\mc{K}_{ii}^1f_i\!+\!\mc{K}_{iM}^1f_M\right),
  \;\; 1\leq i \leq M-1.
\end{equation}
Fix $r\in \{1,2,...,M-1\}$. We will show that there are  $M$ compact and strongly positive operators
\begin{equation*}
 \mc{R}_{rj}: F\rightarrow X_r, \;\;\;\; j\in \{1,...,M\},
\end{equation*}
such that
\begin{equation}
 \label{6.19}
  \psi_r=\sum_{j=1}^M \mc{R}_{rj}f_j.
\end{equation}
Indeed, let $r_1 \in\{1,...,M-1\}\setminus \{r\}$ be and set
\begin{equation*}
  I_1:=\{1,...,M-1\}\setminus \{r_1\}.
\end{equation*}
Then, substituting  the $r_1$-equation  of \eqref{6.18} in the remaining ones and  rearranging terms yields
\begin{equation}
\label{6.20}
 \psi_i=\sum_{j\in I_1} \mc{T}_{ij}^2\psi_j+\mc{K}_{ii}^2f_i+\mc{K}_{ir_1}^2f_{r_1}+\mc{K}_{iM}^2f_M, \qquad  i\in I_1,
\end{equation}
 for some compact strongly positive operators,
 \begin{equation*}
\mc{T}_{ij}^2, \;  \mc{K}_{ii}^2, \;\mc{K}_{ir_1}^2,\; \mc{K}_{iM}^2  : F\rightarrow X_i,\qquad
 i, j \in I_1,
\end{equation*}
determined from $\mc{P}$ and $\mc{C}$. Arguing as before, \eqref{6.20} can be expressed as
\begin{equation*}
   \psi_i=\mc{T}_{ii}^2\psi_i+p_i^2, \quad i \in I_1,
\end{equation*}
for some $p_i^2\in \mathrm{int\,} X_i$, $i\in I_1$. Since
$\psi_i \in \mathrm{int\, } X_i$, by Theorem \ref{thKR}(e),
\begin{equation*}
  \mathrm{spr\,}\mc{T}_{ii}^2 <1, \qquad i \in I_1,
\end{equation*}
and hence, owing to Theorem \ref{thKR}(d), the compact operators
\begin{equation*}
( \mc{I}_{X_i} - \mc{T}_{ii}^2)^{-1}:\; X_i\rightarrow X_i,  \qquad  i \in I_1,
\end{equation*}
are strongly positive. Thus, from \eqref{6.20} we infer that, for every $i \in I_1$,
\begin{equation}
\label{6.21}
 \psi_i=\left( \mc{I}_{X_i} - \mc{T}_{ii}^2\right)^{-1} \left(\sum_{\substack{j=1\cr j\neq i, r_1}}^{M-1} \mc{T}_{ij}^2\psi_j+\mc{K}_{ii}^2f_i+\mc{K}_{ir_1}^2f_{r_1}+\mc{K}_{iM}^2f_M\right).
\end{equation}
If $I_1$ has an unique element, the proof is finished. Suppose that it has at least two, pick  $r_2 \in I_1\setminus \{r\}$ and set
\[
  I_2 := I_1\setminus \{r_2\}.
\]
Substituting the $r_2$-equation of \eqref{6.21} in the remaining ones and rearranging terms yields
\begin{equation}
\label{6.22}
 \psi_i=\sum_{j\in I_2} \mc{T}_{ij}^3\psi_j+\mc{K}_{ii}^3f_i+\mc{K}_{ir_1}^3f_{r_1}+\mc{K}_{ir_2}^3f_{r_2}+\mc{K}_{iM}^3f_M, \qquad
i\in I_2,
\end{equation}
for some compact strongly positive operators
 \begin{equation*}
 \mc{T}_{ij}^3, \; \mc{K}_{ii}^3, \;\mc{K}_{ir_1}^3, \;\mc{K}_{ir_2}^3,\; \mc{K}_{iM}^3 :\; F\rightarrow X_i,\qquad
 i, j \in I_2,
\end{equation*}
given by  $\mc{P}$ and $\mc{C}$, whose expressions are irrelevant in this proof. This recursive argument drives us to \eqref{6.19} in a finite number of steps. Indeed, in the last step we will be driven to the identity
\begin{equation}
\label{6.23}
 \psi_r=\mc{T}_{rr}^M \psi_r +\sum_{j=1}^{M-2} \mc{K}_{rr_j}^M f_j+ \mc{K}_{rr}^M f_r+ \mc{K}_{rM}^M f_M,
\end{equation}
for some compact strongly positive operators
 \begin{equation*}
\mc{T}_{rr}^M, \;  \mc{K}_{rj}^M\; :\;\; F\rightarrow X_r,\qquad 1\leq j \leq M,
\end{equation*}
constructed from $\mc{P}$ and $\mc{C}$. The equation \eqref{6.23} can be written down as
 \begin{equation*}
\psi_r=\mc{T}_{rr}^M\psi_r+p_r^M
\end{equation*}
for some  $p_r^M\in \mathrm{int\, } X_r$. As before, since
$\psi_r \in \mathrm{int\, } X_r$, Theorem \ref{thKR}(e) implies that
\begin{equation*}
  \mathrm{spr\,}\mc{T}_{rr}^M <1,
\end{equation*}
and hence, due to Theorem \ref{thKR}(d), the compact operator
\begin{equation*}
( \mc{I}_{X_r} - \mc{T}_{rr}^M)^{-1}:\; X_r\rightarrow X_r,
\end{equation*}
is strongly positive. Therefore, \eqref{6.23} becomes
\begin{equation*}
 \psi_r=\sum_{j=1}^{M-2}( I_{X_r} - \mc{T}_{rr}^M)^{-1} \mc{K}_{rr_j}^M f_j+( I_{X_r} - \mc{T}_{rr}^M)^{-1} \mc{K}_{rr}^M f_r+
 ( I_{X_r} -\mc{T}_{rr}^M)^{-1} \mc{K}_{rM}^M f_M
\end{equation*}
and setting
\[
  \mc{R}_{rj}:=( I_{X_r} - \mc{T}_{rr}^M)^{-1} \mc{K}_{rj}^M,\qquad j\in\{1,...,M\},
\]
we obtain that
\begin{equation*}
  \psi_r=\sum_{j=1}^M \mc{R}_{rj}f_j, \qquad  r\in \{1,2,...,M-1\}.
\end{equation*}
Finally, substituting these identities into the last equation of \eqref{6.9}, it is easily seen  that \eqref{6.16}
also holds for $i=M$. Summarizing, there are $M^2$ compact and  strongly positively operators,  $\mc{R}_{ij}:F\rightarrow X_i$,
$i, j \in\{1,...,M\}$,  such that
\begin{equation}
\label{6.24}
 \psi_i=\sum_{j=1}^M \mc{R}_{ij}f_j, \qquad i\in \{1,...,M\}.
\end{equation}
Since all the operators involved in this process are well defined, the last argument, also shows that
\begin{equation*}
   u: =\mc{R} f, \qquad \mathcal{R}:=\left(\mc{R}_{ij}\right)_{1\leq i, j \leq M},
\end{equation*}
actually provides us gives with the unique solution of
\begin{equation*}
  \mc{P} u =\mc{C} u +f.
\end{equation*}
Thus, the proof of (C1) $\Rightarrow$ (C2) is complete.
\par
Obviously, (C2) $\Rightarrow$ (C3) $\Rightarrow$ (C4). Next, we will show that (C3) $\Rightarrow$  (C5). According to (C3),
the operator
\begin{equation*}
 (\mc{P}-\mc{C})^{-1} :V\rightarrow V,
\end{equation*}
is compact and strongly order preserving. Thus, by Theorem \ref{thKR}(a,b),
\[
\mathrm{spr\,} (\mc {P}-\mc{C})^{-1} > 0
\]
provides us with the unique real eigenvalue of $(\mc {P}-\mc{C})^{-1}$ to a positive eigenfunction, $\Phi$.
Moreover, $\Phi\gg 0$. The principal eigenvalue whose existence is claimed in (C5) is given by
\begin{equation*}
  \s [\mc {P}-\mc{C}]:=\frac{1}{\mathrm{spr\,} (\mc {P}-\mc{C})^{-1} }.
\end{equation*}
Now, we will show  that  (C5) $\Rightarrow$ (C1). Let $\Phi\in \mathrm{int\,}\mc {P}_U $ be such that
\begin{equation*}
\mc{P} \Phi = \mc{C} \Phi + \s[\mc{P}-\mc{C}] \Phi.
\end{equation*}
Then, $ \mc{P} \Phi > \mc{C} \Phi $, as requested in (C1).
\par
Finally, we will show that (C4) $\Rightarrow$ (C1).
As we are assuming that \eqref{vi.6} satisfies de maximum principle, $u=0$ is the unique solution of
\begin{equation*}
\mc{P} u = \mc{C} u.
\end{equation*}
Therefore, by the  open mapping Theorem,
\begin{equation*}
  \mc{P}-\mc{C} : U\rightarrow V
\end{equation*}
is a topological  isomorphism, as it is  Fredholm of index zero.
\par
Fix $p \in \mathrm{int\,}\mc{P}_U$. As  \eqref{vi.6} satisfies the maximum principle, we have that
\begin{equation}
\label{6.25}
   \Psi := (\mc{P} - \mc{C})^{-1}p\in P_U\setminus\{0\}.
\end{equation}
Let $\l > 0$ be large enough so that
\begin{equation}
\label{6.26}
  \s[\mc{P}_k \!-\! c_{kk}\!+\! \l,\mf{B}_k,Q_T]\!=\! \s[\mc{P}_k \!-\! c_{kk},\mf{B}_k,Q_T]\!+\! \l >0, \;\;
   k \in \{1,...,M\}.
\end{equation}
It suffices to consider
\begin{equation*}
  \l =\max\left\{1,  \max_{1\leq k \leq M} \Big( - \s[\mc{P}_k - c_{kk},\mf{B}_k,Q_T]\Big) \right\}.
\end{equation*}
Let $(\psi_1,...,\psi_M)$ be the coordinates of $\Psi$. According to \eqref{6.25},
\begin{equation*}
(\mc{P}_k - c_{kk}+ \l) \psi_k = \l \psi_k+p_k+\sum_{j=1, j\neq k}^M c_{kj} \psi_j, \qquad 1\leq k \leq M.
\end{equation*}
Thus,
\begin{equation*}
 \psi_k = (\mc{P}_k - c_{kk}+ \l)^{-1}\left( \l \psi_k+p_k+\sum_{j=1, j\neq k}^M c_{kj} \psi_j\right), \qquad 1 \leq k \leq M.
\end{equation*}
Therefore, since $p_k\gg 0$, it follows from Theorem \ref{th12} that
\begin{equation*}
    \psi_k \in \mathrm{int\, } \mc{P}_{E_{\mf{B}_k}}, \qquad 1\leq  k \leq M.
\end{equation*}
As, due to \eqref{6.25},
\begin{equation*}
    \mc{P} \Psi=\mc{C} \Psi + p>\mc{C} \Psi,
\end{equation*}
$\Psi$ provides us with the supersolution we were looking for and the proof of the theorem is concluded.
\end{proof}

Subsequently, the principal eigenvalue  $\s [\mc {P}-\mc{C}]$ will be denoted by
\[
   \s [\mc {P}-\mc{C}]:=\s [\mc {P}-\mc{C},\mc{B},Q_T]
\]
to emphasize its dependence on the boundary operator $\mc{B}$.

\section{Fundamental properties of the principal eigenvalues}

This section applies theorem \ref{th6.1} to obtain some fundamental properties of the principal eigenvalues. Among them, their
monotonicity and continuity properties. The next result provides us with the monotonicity of the principal
eigenvalue with respect to the coupling matrix $\mc{C}$.

\begin{theorem}
\label{th7.1}
Let
\begin{equation*}
  \mc{C}_1=\left(c_{ij}^1\right)_{i,j=1,...,M},\qquad \mc{C}_2=\left(c_{ij}^2\right)_{i,j=1,...,M},
\end{equation*}
be two cooperative matrixes of order $M$ such that  $\mc{C}_1>\mc{C}_2$ in the sense that
\begin{equation*}
 c_{ij}^1\geq c_{ij}^2  \qquad \hbox{for all} \;\; i, j \in \left\{1,...,M\right\}
\end{equation*}
and $c_{ij}^1\neq c_{ij}^2$ for some   $i, j\in \left\{1,...,M\right\}$. Then,
\begin{equation}
\label{7.1}
 \s [\mc{P} - \mc{C}_1,\mc{B},Q_T]< \s [\mc{P} - \mc{C}_2,\mc{B},Q_T].
\end{equation}

\end{theorem}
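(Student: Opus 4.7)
The plan is to exploit the characterization of the principal eigenvalue provided by Theorem \ref{th6.1} together with the existence of a strongly positive principal eigenfunction. Let me abbreviate $\sigma_i:=\sigma[\mathcal{P}-\mathcal{C}_i,\mathcal{B},Q_T]$ for $i=1,2$, and let $\Phi_1\in \mathrm{int\,}P_U$ be the principal eigenfunction associated with $\sigma_1$, whose existence is guaranteed by property (C5) of Theorem \ref{th6.1}. Thus
\begin{equation*}
  \mathcal{P}\Phi_1 = \mathcal{C}_1\Phi_1+\sigma_1\Phi_1 \qquad \hbox{in}\;\;Q_T,\quad \mathcal{B}\Phi_1 = 0 \;\;\hbox{on}\;\;\p\O\times[0,T].
\end{equation*}

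The key step is to use $\Phi_1$ as a positive strict supersolution of the shifted system $\mathcal{P}-(\mathcal{C}_2+\sigma_1 I)$. Indeed, a direct manipulation gives
\begin{equation*}
  \mathcal{P}\Phi_1-(\mathcal{C}_2+\sigma_1 I)\Phi_1=(\mathcal{C}_1-\mathcal{C}_2)\Phi_1.
\end{equation*}
I would then verify that the right-hand side lies in $P_V\setminus\{0\}$. Componentwise it equals $\sum_{j=1}^M (c_{ij}^1-c_{ij}^2)\phi_{1,j}$, which is non-negative since $\mathcal{C}_1\geq \mathcal{C}_2$ entrywise and $\Phi_1\gg 0$. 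For the non-triviality, if $c_{i_0 j_0}^1\neq c_{i_0 j_0}^2$ for some indices $i_0,j_0$, Hölder continuity forces this difference to be strictly positive on some open subset of $\bar Q_T$; since $\Omega\times(0,T)$ is open and dense in $\bar Q_T$, this open set meets $\Omega\times(0,T)$, where $\phi_{1,j_0}>0$. Therefore the $i_0$-th component is strictly positive on an open subset of $Q_T$, so $(\mathcal{C}_1-\mathcal{C}_2)\Phi_1\in P_V\setminus\{0\}$.

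With this verified, $\Phi_1\in\mathrm{int\,}P_U$ satisfies $\mathcal{P}\Phi_1>(\mathcal{C}_2+\sigma_1 I)\Phi_1$, which is precisely condition (C1) of Theorem \ref{th6.1} applied to the coupling matrix $\mathcal{C}_2+\sigma_1 I$ (which is still cooperative, since adding a multiple of the identity does not alter the off-diagonal entries). The equivalence (C1)$\Leftrightarrow$(C5) of Theorem \ref{th6.1} then yields
\begin{equation*}
  \sigma[\mathcal{P}-(\mathcal{C}_2+\sigma_1 I),\mathcal{B},Q_T]>0.
\end{equation*}

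Finally, I would observe that the eigenvalue problem $\mathcal{P}\phi=(\mathcal{C}_2+\sigma_1 I)\phi+\sigma\phi$ is exactly $\mathcal{P}\phi=\mathcal{C}_2\phi+(\sigma+\sigma_1)\phi$, so by the uniqueness of the principal eigenvalue (C5),
\begin{equation*}
  \sigma[\mathcal{P}-(\mathcal{C}_2+\sigma_1 I),\mathcal{B},Q_T]=\sigma_2-\sigma_1.
\end{equation*}
Combining the last two identities yields $\sigma_2-\sigma_1>0$, which is \eqref{7.1}. The only delicate point is the non-triviality check $(\mathcal{C}_1-\mathcal{C}_2)\Phi_1\not\equiv 0$, but this is handled cleanly by Hölder continuity together with the fact that $\Phi_1\in\mathrm{int\,}P_U$; the rest of the argument is a mechanical application of Theorem \ref{th6.1}.
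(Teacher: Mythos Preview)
Your proof is correct and follows essentially the same route as the paper: take the principal eigenfunction $\Phi_1\in\mathrm{int\,}P_U$ for $\mc{P}-\mc{C}_1$, compute $(\mc{P}-\mc{C}_2-\s_1 I)\Phi_1=(\mc{C}_1-\mc{C}_2)\Phi_1>0$, and invoke Theorem~\ref{th6.1} to conclude $\s_2-\s_1>0$. The only difference is that you spell out more carefully why $(\mc{C}_1-\mc{C}_2)\Phi_1\neq 0$ (via H\"older continuity and the interior positivity of $\Phi_1$), a step the paper records with the single remark ``$>0$, since $\Phi_1\gg 0$''.
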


\begin{proof}
Let $\Phi_1 \in \mathrm{int\,}\mc {P}_U$ be a positive eigenfunction associated to the principal eigenvalue
 $\s [\mc{P} - \mc{C}_1,\mc{B},Q_T]$. Then,
\begin{align*}
  &  \left(\mc{P} -\mc{C}_2-\s [\mc{P} - \mc{C}_1,\mc{B},Q_T]I_{\R^M}\right)\Phi_1 \\ &\hspace{1.4cm} = \left(\mc{P}
  -\mc{C}_1- \s [\mc{P} - \mc{C}_1,\mc{B},Q_T]I_{\R^M} \right)\Phi_1+   \left(\mc{C}_1-\mc{C}_2\right)\Phi_1,
\end{align*}
where $I_{\R^M}$ stands for the identity map of $\R^M$. Thus,
\begin{equation*}
  \left(\mc{P}-\mc{C}_2-\s [\mc{P} - \mc{C}_1,\mc{B},Q_T]I_{\R^M}\right)\Phi_1 = \left(\mc{C}_1-\mc{C}_2\right)\Phi_1>0,
\end{equation*}
since $\Phi_1\gg 0$. Hence, $\Phi_1$ is an strongly positive supersolution of
\begin{equation*}
  \left( \mc{P}-\mc{C}_2-\s [\mc{P} - \mc{C}_1,\mc{B},Q_T]I_{\R^M},\mc{B},Q_T\right).
\end{equation*}
Consequently, by Theorem \ref{th6.1}, we may infer that
\begin{equation*}
  \s [\mc{P}-\mc{C}_2-\s [\mc{P} - \mc{C}_1,\mc{B},Q_T]I_{\R^M},\mc{B},Q_T]>0.
\end{equation*}
Therefore,
\begin{align*}
  0 & <\s [\mc{P}-\mc{C}_2-\s [\mc{P} - \mc{C}_1,\mc{B},Q_T]I_{\R^M},\mf{B},Q_T]\\ & = \s [\mc{P} - \mc{C}_2,\mc{B},Q_T]- \s [\mc{P} - \mc{C}_1,\mc{B},Q_T],
\end{align*}
which ends the proof.
\end{proof}

The next two corollaries of Theorem \ref{7.1} establish the continuous dependence of the principal eigenvalue with respect to $\mc{C}$. Subsequently, for any given Banach space, $X$, and integer $M\geq 1$, $\mf{M}_M(X)$ will stand for the set of
matrices of order $M$ with entries in $X$.

\begin{corollary}
\label{co7.1}
Let $\mc{C}_n\in \mf{M}_M(F)$, $n\geq 1$, be a sequence of matrices of cooperative type such that
\[
  \lim_{n\rightarrow \infty}\mc{C}_n=\mc{C} \quad \hbox{in} \quad \mf{M}_M(L^\infty(Q_T)).
\]
Then,
\begin{equation*}
 \lim_{n\rightarrow \infty}\s [\mc{P} - \mc{C}_n,\mc{B},Q_T]=\s [\mc{P} - \mc{C},\mc{B},Q_T].
\end{equation*}
\end{corollary}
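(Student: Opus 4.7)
The plan is to combine the monotonicity supplied by Theorem 7.1 with a compactness argument on the principal eigenfunctions.

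First, I bound the sequence $\sigma_n:=\s[\mc{P}-\mc{C}_n,\mc{B},Q_T]$. Writing $J$ for the $M\times M$ matrix all of whose entries equal one, and setting $\varepsilon_n:=\|\mc{C}_n-\mc{C}\|_{L^\infty(Q_T)}$, the off-diagonal entries of the cooperative matrix $\mc{C}$ are bounded below by a strictly positive constant on the compact set $\bar Q_T$, so for $n$ sufficiently large the matrices $\mc{C}\pm\varepsilon_n J$ remain cooperative. The entrywise inequality $\mc{C}-\varepsilon_n J\le \mc{C}_n\le \mc{C}+\varepsilon_n J$ together with Theorem 7.1 gives
\[
\s[\mc{P}-(\mc{C}+\varepsilon_n J),\mc{B},Q_T]\le \sigma_n\le \s[\mc{P}-(\mc{C}-\varepsilon_n J),\mc{B},Q_T],
\]
showing that $\{\sigma_n\}$ is bounded. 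It therefore suffices to prove that along any convergent subsequence $\sigma_{n_k}\to\sigma^{*}$ one has $\sigma^{*}=\s[\mc{P}-\mc{C},\mc{B},Q_T]$.

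Let $\Phi_{n_k}\in\mathrm{int\,}P_U$ be the principal eigenfunction for $\mc{P}-\mc{C}_{n_k}$ furnished by Theorem 6.1, normalized so that $\|\Phi_{n_k}\|_{L^\infty(Q_T;\R^M)}=1$. Written componentwise, the eigenvalue equation reads
\[
(\mc{P}_j-c_{jj}^{n_k})\phi_{n_k,j}=\sigma_{n_k}\phi_{n_k,j}+\sum_{i\ne j}c_{ji}^{n_k}\phi_{n_k,i},\qquad 1\le j\le M,
\]
whose right-hand side is uniformly bounded in $L^\infty(Q_T)$. Periodic-parabolic $L^p$-regularity, applicable because the coefficients of $\mc{P}_j$ lie in $F$, yields uniform $W^{2,1,p}(Q_T;\R^M)$ bounds on $\Phi_{n_k}$ for every $p<\infty$, and Sobolev embedding lifts these to uniform $\mathcal{C}^{1+\alpha,(1+\alpha)/2}(\bar Q_T;\R^M)$ bounds. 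A further Arzelà--Ascoli subsequence (not relabelled) converges in $\mathcal{C}^{1}(\bar Q_T;\R^M)$ to a limit $\Phi$ with $\Phi\ge 0$, $\|\Phi\|_\infty=1$ (hence $\Phi\ne 0$), and $\mc{B}\Phi=0$. Passing to the limit in the equation gives $(\mc{P}-\mc{C})\Phi=\sigma^{*}\Phi$.

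To identify $\sigma^{*}$, choose $\lambda>0$ so large that $\s[\mc{P}-\mc{C},\mc{B},Q_T]+\lambda>0$; by the scalar shift property this reads $\s[\mc{P}-\mc{C}+\lambda I_{\R^M},\mc{B},Q_T]>0$, so Theorem 6.1 furnishes a compact, strongly positive resolvent $T:=(\mc{P}-\mc{C}+\lambda I_{\R^M})^{-1}$. From $\Phi=(\sigma^{*}+\lambda)T\Phi$ and $T(P_U\setminus\{0\})\subset\mathrm{int\,}P_U$ it follows that $\Phi$ is actually strongly positive, and so a positive eigenfunction of $T$ with eigenvalue $(\sigma^{*}+\lambda)^{-1}$. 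The Krein--Rutman uniqueness clause embedded in Theorem 6.1 forces $(\sigma^{*}+\lambda)^{-1}=\mathrm{spr\,}T=(\s[\mc{P}-\mc{C},\mc{B},Q_T]+\lambda)^{-1}$, i.e. $\sigma^{*}=\s[\mc{P}-\mc{C},\mc{B},Q_T]$. Uniqueness of all subsequential limits then yields $\sigma_n\to\s[\mc{P}-\mc{C},\mc{B},Q_T]$. The main obstacle is keeping the limiting eigenfunction nontrivial: since $\mc{C}_n\to\mc{C}$ only in $L^\infty$, the Hölder bounds inherited by each $\Phi_{n_k}\in U$ from $\mc{C}_{n_k}\in \mf{M}_M(F)$ are not uniform in $n_k$, and it is the softer $L^p$-parabolic estimates combined with the $L^\infty$-normalization that prevent $\Phi$ from collapsing to zero in the limit.
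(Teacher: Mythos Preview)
Your argument is correct: you bound $\{\sigma_n\}$ via Theorem~\ref{th7.1}, extract a nontrivial nonnegative limiting eigenfunction through uniform $L^p$-parabolic estimates (which need only $L^\infty$-control of the zeroth-order coefficients, as you note), and identify the subsequential limit $\sigma^*$ with the principal eigenvalue by the Krein--Rutman uniqueness clause applied to the shifted resolvent $T=(\mc{P}-\mc{C}+\lambda I_{\R^M})^{-1}$. The minor omissions---replacing $\varepsilon_n$ by a fixed small $\varepsilon_0$ to make the bound on $\sigma_n$ uniform, passing to the limit in the equation via weak $W^{2,1,p}$-convergence, and a Schauder bootstrap to place $\Phi$ back into $U$---are routine.

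The paper's route is far shorter and purely order-theoretic: from the entrywise sandwich $\mc{C}-\varepsilon\le\mc{C}_n\le\mc{C}+\varepsilon$ and Theorem~\ref{th7.1} it obtains
\[
\sigma[\mc{P}-\mc{C}-\varepsilon,\mc{B},Q_T]\le\sigma_n\le\sigma[\mc{P}-\mc{C}+\varepsilon,\mc{B},Q_T],
\]
and then replaces the two outer quantities by $\sigma[\mc{P}-\mc{C},\mc{B},Q_T]\mp\varepsilon$. That last replacement is the scalar-shift identity, valid when ``$\pm\varepsilon$'' means $\pm\varepsilon I_{\R^M}$; however the entrywise sandwich actually produces the perturbation $\pm\varepsilon J$ (all entries shifted), and $\sigma[\mc{P}-(\mc{C}+\varepsilon J)]$ need not equal $\sigma[\mc{P}-\mc{C}]-\varepsilon$ once $M\ge 2$, since that would require $J\Phi$ to be a scalar multiple of the principal eigenfunction $\Phi$, i.e.\ all components of $\Phi$ mutually proportional---something that fails in general when the boundary operators $\mf{B}_k$ differ. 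Your compactness approach is longer but sidesteps this issue completely; the paper's two-line squeeze is transparent for $M=1$ but, read literally, leaves a small gap in the genuine system case that your argument closes.
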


\begin{proof}
For every $\e >0$ there exists a natural number $n_0=n_0(\e)>1$ such that
\begin{equation*}
  \left \|\mc{C}_n-\mc{C} \right \|_{\mf{M}_M(L^\infty(Q_T))} \leq \e\quad \hbox{for all} \;\; n\geq n_0.
\end{equation*}
Thus,
\begin{equation*}
  \mc{C}-\e \leq \mc{C}_n \leq \mc{C}+\e \quad \hbox{for all} \;\; n\geq n_0\;\;\hbox{in}\;\; Q_T.
\end{equation*}
Hence, due to Theorem \ref{th7.1},
\begin{equation*}
\s [\mc{P} - \mc{C}-\e,\mc{B},Q_T] \leq \s [\mc{P} - \mc{C}_n,\mc{B},Q_T] \leq \s [\mc{P} - \mc{C}+\e,\mc{B},Q_T], \quad n\geq n_0.
\end{equation*}
So, we can infer that
\begin{equation*}
\s [\mc{P} - \mc{C},\mc{B},Q_T]-\e \leq \s [\mc{P} - \mc{C}_n,\mc{B},Q_T] \leq \s [\mc{P} - \mc{C},\mc{B},Q_T]+\e, \quad n\geq n_0,
\end{equation*}
and the proof is completed.
\end{proof}

\vspace{0.4cm}
\begin{corollary}
\label{co7.2}
Let $\l_n \in \R$, $n\geq 1$, be a sequence such that
\[
   \lim_{n\rightarrow \infty} \l_n =\l\in\R
\]
and $\mc{C}\in \mf{M}_M(F)$ a cooperative matrix. Then,
\[
\lim_{n\rightarrow \infty}\l_n \mc{C}=\l\mc{C} \quad \hbox{in} \quad \mf{M}_M(L^\infty(Q_T)).
\]
Therefore, by Corollary \ref{co7.1},
\begin{equation*}
\lim_{n\rightarrow \infty}\s [\mc{P} - \l_n\mc{C},\mc{B},Q_T]=\s [\mc{P} -\l \mc{C},\mc{B},Q_T].
\end{equation*}
Consequently, the mapping $\l \mapsto \s [\mc{P} -\l \mc{C},\mc{B},Q_T]$ is continuous.
\end{corollary}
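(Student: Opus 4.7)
The plan is to read this corollary as three successive assertions and dispose of each one in turn. First I would verify the normwise convergence $\lambda_n \mc{C} \to \lambda \mc{C}$ in $\mf{M}_M(L^\infty(Q_T))$; then I would invoke Corollary \ref{co7.1} on the sequence $\mc{C}_n := \lambda_n \mc{C}$ to conclude convergence of the principal eigenvalues; and finally I would translate this sequential statement into the continuity of $\lambda \mapsto \s[\mc{P}-\lambda \mc{C},\mc{B},Q_T]$ on $\R$ via the standard sequential characterization of continuity in the metric space $\R$.

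The first step is the only one that requires an actual estimate, and it is routine: since $\mc{C} \in \mf{M}_M(F)$ and the elements of $F$ are, by the definition in \eqref{1.2}, H\"{o}lder continuous and $T$-periodic on $\bar\O\times\R$, they are bounded on $\bar Q_T$; hence $K:=\|\mc{C}\|_{\mf{M}_M(L^\infty(Q_T))}<\infty$, and
\begin{equation*}
   \|\lambda_n \mc{C}-\lambda \mc{C}\|_{\mf{M}_M(L^\infty(Q_T))}
   = |\lambda_n-\lambda|\, K \xrightarrow[n\to\infty]{} 0.
\end{equation*}
This is the entire content of the first assertion.

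For the second step, I would apply Corollary \ref{co7.1} directly with the cooperative matrices $\mc{C}_n:=\lambda_n \mc{C}$, whose $L^\infty$-limit is $\lambda \mc{C}$ by the previous step; the conclusion $\lim_{n\to\infty}\s[\mc{P}-\lambda_n \mc{C},\mc{B},Q_T]=\s[\mc{P}-\lambda \mc{C},\mc{B},Q_T]$ is then immediate. For the third step, since the argument just performed works for an arbitrary sequence $\lambda_n\to \lambda$, the sequential criterion for continuity in $\R$ yields the continuity of the map $\lambda\mapsto\s[\mc{P}-\lambda \mc{C},\mc{B},Q_T]$.

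The only subtle point — really the sole candidate for an obstacle — is that the cooperative hypothesis of Corollary \ref{co7.1} requires the off-diagonal entries of each $\mc{C}_n$ to remain strictly positive on $\bar Q_T$, which forces a sign restriction on $\lambda_n$ (e.g.\ $\lambda_n>0$ if $\mc{C}$ is cooperative with strictly positive off-diagonal entries). Since the hypothesis of the corollary as stated takes $\lambda_n,\lambda\in\R$, I would either restrict to the range of $\lambda$ for which $\lambda \mc{C}$ remains cooperative, or note that it suffices for the off-diagonal entries of the approximating sequence to stay non-negative, which holds for all $n$ large enough once $\lambda>0$ (and analogously in a one-sided neighbourhood of any admissible $\lambda$). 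Under this natural domain restriction, no further work is required and the proof terminates in the three short steps above.
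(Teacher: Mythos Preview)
Your proposal is correct and follows precisely the route the paper intends: the paper gives no separate proof for this corollary, embedding the argument entirely in the statement (``Therefore, by Corollary \ref{co7.1}\ldots Consequently\ldots''), and your three steps simply spell out that implicit reasoning. Your final paragraph about the cooperativity constraint on $\lambda_n\mc{C}$ is a genuine observation that the paper passes over in silence; since the principal eigenvalue $\s[\mc{P}-\lambda\mc{C},\mc{B},Q_T]$ is only defined (via Theorem \ref{th6.1}) when $\lambda\mc{C}$ is cooperative, the map in question is really only defined for $\lambda>0$, and your restriction is the natural one.
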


Subsequently, we will adapt Propositions 3.1, 3.2  and 3.5 of \cite{CL-02} to our setting here. Essentially, they establish the monotonicity of the principal eigenvalue with respect to  $\b$ and with respect to the underlying domain $\O$, as well as the dominance of the principal eigenvalue under homogeneous Dirichlet boundary conditions.
To state their periodic-parabolic counterparts we need to introduce some of notation. For every $k\in\{1,...,M\}$, we will emphasize the dependence of the boundary operator $\mc{B}$ on the weight function $\b_k$ by setting
\[
 \mc{B}(\b_k):=\mc{B}
\]
if $\G_{1,k}\neq \emptyset$. Note that $\mf{B}_k=\mf{D}_k$ if $\G_{1,k} = \emptyset$. The next result shows the monotonicity of the principal eigenvalue with respect to each of these $\b_k$'s, separately.

\begin{theorem}
\label{th7.2} Let $\mc{C}\in \mf{M}_M(F)$ be a cooperative matrix and suppose $\G_{1,k}\neq \emptyset $ for some $k\in \{1,...,M\}$.
Let $\b_{k,i} \in \mathcal{C}^{1+\t}(\G_{1,k})$, $i=1, 2$, be satisfying $\b_{k,1}<\b_{k,2}$. Then,
\begin{equation*}
   \s [\mc{P} -\mc{C},\mc{B}(\b_{k,1}),Q_T]< \s [\mc{P} -\mc{C},\mc{B}(\b_{k,2}),Q_T].
\end{equation*}
\end{theorem}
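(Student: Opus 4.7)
The plan is to construct, starting from the positive principal eigenfunction $\Phi_1=(\phi_{1,1},\ldots,\phi_{1,M})\in\mathrm{int}\,P_{U(\beta_{k,1})}$ associated with $\sigma_1:=\sigma[\mathcal{P}-\mathcal{C},\mathcal{B}(\beta_{k,1}),Q_T]$, a strongly positive element $\tilde\Psi\in\mathrm{int}\,P_{U(\beta_{k,2})}$ satisfying $\mathcal{P}\tilde\Psi>(\mathcal{C}+\sigma_1 I_{\mathbb{R}^M})\tilde\Psi$. Then, applying the implication (C1)$\Rightarrow$(C5) of Theorem \ref{th6.1} with the cooperative matrix $\mathcal{C}+\sigma_1 I_{\mathbb{R}^M}$ and the boundary operator $\mathcal{B}(\beta_{k,2})$ will deliver $\sigma[\mathcal{P}-\mathcal{C}-\sigma_1 I_{\mathbb{R}^M},\mathcal{B}(\beta_{k,2}),Q_T]>0$, which is equivalent to $\sigma[\mathcal{P}-\mathcal{C},\mathcal{B}(\beta_{k,2}),Q_T]>\sigma_1$, as claimed. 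The scalar case $M=1$ is essentially Theorem \ref{th12} applied to $\phi_{1,1}$ and will be clear from what follows.

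The difficulty is that $\Phi_1$ itself does not lie in $U(\beta_{k,2})$: on $\Gamma_{1,k}\times[0,T]$ one computes $\mathfrak{B}_k(\beta_{k,2})\phi_{1,k}=(\beta_{k,2}-\beta_{k,1})\phi_{1,k}\geq 0$ with strict inequality somewhere, so the naive copy of the proof of Theorem \ref{th7.1} fails. To correct this, I would only modify the $k$-th component. Since $(\mathcal{P}_k-c_{kk}-\sigma_1)\phi_{1,k}=\sum_{j\neq k}c_{kj}\phi_{1,j}>0$ in $Q_T$ by cooperativity and strict positivity of the $\phi_{1,j}$, $j\neq k$, the function $\phi_{1,k}\in E$ is a non-negative strict supersolution of $(\mathcal{P}_k-c_{kk}-\sigma_1,\mathfrak{B}_k(\beta_{k,2}),Q_T)$. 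Since condition (b) of the scalar Theorem \ref{th12} permits non-homogeneous boundary data, this yields $\lambda_1[\mathcal{P}_k-c_{kk}-\sigma_1,\mathfrak{B}_k(\beta_{k,2}),Q_T]>0$. Consequently, the auxiliary scalar problem
\[
(\mathcal{P}_k-c_{kk}-\sigma_1)\tilde\phi_k=\sum_{j\neq k}c_{kj}\phi_{1,j}\;\;\text{in}\;\;Q_T,\qquad \mathfrak{B}_k(\beta_{k,2})\tilde\phi_k=0\;\;\text{on}\;\;\partial\Omega\times[0,T],
\]
admits a unique solution $\tilde\phi_k\gg 0$. Moreover, $\phi_{1,k}-\tilde\phi_k$ solves the homogeneous PDE with $\mathfrak{B}_k(\beta_{k,2})(\phi_{1,k}-\tilde\phi_k)=(\beta_{k,2}-\beta_{k,1})\phi_{1,k}\geq 0$, strict somewhere, so the strong maximum principle (Theorem \ref{th12}) yields $\phi_{1,k}-\tilde\phi_k\gg 0$ in $(\Omega\cup\Gamma_{1,k})\times[0,T]$.

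Setting $\tilde\Psi:=(\phi_{1,1},\ldots,\phi_{1,k-1},\tilde\phi_k,\phi_{1,k+1},\ldots,\phi_{1,M})$, one has $\tilde\Psi\in\mathrm{int}\,P_{U(\beta_{k,2})}$. A direct computation shows that the $k$-th component of $(\mathcal{P}-\mathcal{C}-\sigma_1 I_{\mathbb{R}^M})\tilde\Psi$ vanishes by the very definition of $\tilde\phi_k$, whereas for $i\neq k$ this component reduces to $c_{ik}(\phi_{1,k}-\tilde\phi_k)$, which is strictly positive in $Q_T$ thanks to cooperativity ($c_{ik}>0$) together with the strict comparison $\phi_{1,k}>\tilde\phi_k$ established above. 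Hence $(\mathcal{P}-\mathcal{C}-\sigma_1 I_{\mathbb{R}^M})\tilde\Psi>0$ in $V$, which is precisely condition (C1) of Theorem \ref{th6.1} for the cooperative matrix $\mathcal{C}+\sigma_1 I_{\mathbb{R}^M}$, and the proof concludes. The main obstacle is purely the mismatch of boundary conditions between $\Phi_1$ and the target space $U(\beta_{k,2})$; the key insight is that the cooperative coupling propagates the strict boundary gap $\beta_{k,2}-\beta_{k,1}$ (felt only by the $k$-th component) into a strict interior gain on every other component, which more than compensates for having killed the boundary strictness when replacing $\phi_{1,k}$ by $\tilde\phi_k$.
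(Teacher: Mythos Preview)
Your argument is correct. For $M\geq 2$ the construction of $\tilde\Psi\in\mathrm{int}\,P_{U(\beta_{k,2})}$ and the verification of (C1) go through as written; the case $M=1$ is handled directly by Theorem~\ref{th12}, as you note.

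The paper's proof is considerably shorter and takes a different route. It does not modify $\Phi_1$ at all: it simply observes that $(\mathcal{P}-\mathcal{C}-\sigma_1)\Phi_1=0$ in $Q_T$, that $\mathfrak{B}_j(\beta_{k,2})\Phi_{1j}=0$ for $j\neq k$, and that $\mathfrak{B}_k(\beta_{k,2})\Phi_{1k}=(\beta_{k,2}-\beta_{k,1})\Phi_{1k}>0$ on $\Gamma_{1,k}\times[0,T]$; it then declares $\Phi_1$ a positive strict supersolution of $(\mathcal{P}-\mathcal{C}-\sigma_1,\mathcal{B}(\beta_{k,2}),Q_T)$ and invokes Theorem~\ref{th6.1} directly. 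You are right that $\Phi_1\notin U(\beta_{k,2})$, so condition (C1), read literally, is not satisfied. The paper is tacitly using the system analogue of condition (b) in the scalar Theorem~\ref{th12}---namely, that a strongly positive strict supersolution with merely non-negative boundary data already forces the principal eigenvalue to be positive---and the same device reappears in the proofs of Theorems~\ref{th7.3} and~\ref{th7.4}. Your construction can be read as an explicit proof of this extension in the situation at hand: by solving one auxiliary scalar problem you convert the boundary strictness on the $k$-th component into interior strictness on every other component, thereby landing exactly in (C1) as stated. Your route is thus more self-contained relative to Theorem~\ref{th6.1} as formulated, at the cost of extra work and the need to treat $M=1$ separately.
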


\begin{proof} Let
\[
  \Phi_1=(\Phi_{11},....,\Phi_{1M}) \gg 0
\]
be a principal eigenfunction associated to $\s [\mc{P} -\mc{C},\mc{B}(\b_{k,1}),Q_T]$. Then,
\[
  (\mc{P}-\mc{C}-\s [\mc{P} -\mc{C},\mc{B}(\b_{k,1}),Q_T])\Phi_1=0 \qquad \hbox{in}\;\; Q_T.
\]
Moreover, since $\mf{B}_j(\b_{k,2}) = \mf{B}_j(\b_{k,1})$ if $j \neq k$, we have that
\[
  \mf{B}_j(\b_{k,2}) \Phi_{1j}=0 \qquad \hbox{on}\;\;\p\O \;\; \hbox{if}\;\; j \in\{1,...,M\}\setminus\{k\}.
\]
Lastly,
\[
  \Phi_{1k} =0 \qquad \hbox{on}\;\; \G_{0,k}
\]
and
\begin{align*}
  \mf{B}_k(\b_{k,2})\Phi_{1k} & = \frac{\p \Phi_{1k}}{\p \nu}+\b_{k,2}\Phi_{1k} \cr & =
  \frac{\p \Phi_{1k}}{\p \nu}+\b_{k,1}\Phi_{1k}+(\b_{k,2}-\b_{k,1})\Phi_{1k}\cr &= (\b_{k,2}-\b_{k,1})\Phi_{1k} >0
\end{align*}
because $\b_{k,2}-\b_{k,1}>0$ and $\Phi_{1k}(x,t)>0$ for all $x \in\O\cup \G_{1,k}$. Thus, $\Phi_1$ provides us
with a strict positive supersolution of
\[
  (\mc{P}-\mc{C}-\s [\mc{P} -\mc{C},\mc{B}(\b_{k,1}),Q_T],\mc{B}(\b_{k,2}),Q_T).
\]
Therefore, according to Theorem \ref{th6.1}, we find that
\begin{align*}
  0 & < \s[\mc{P}-\mc{C}-\s [\mc{P} -\mc{C},\mc{B}(\b_{k,1}),\O],\mc{B}(\b_{k,2}),Q_T] \cr
  & =  \s[\mc{P}-\mc{C},\mc{B}(\b_{k,2}),Q_T]-\s [\mc{P} -\mc{C},\mc{B}(\b_{k,1}),Q_T].
\end{align*}
This ends the proof.
\end{proof}

The next result establishes that the Dirichlet eigenvalue is the greatest one among all possible
principal eigenvalues.

\begin{theorem}
\label{th7.3}
 Let $\mc{C}\in \mf{M}_M(F)$ be a cooperative matrix and suppose $\G_{1,k}\neq \emptyset $ for some $k\in \{1,...,M\}$. Then,
\begin{equation*}
   \s [\mc{P} -\mc{C},\mc{B},Q_T]< \s [\mc{P} -\mc{C},\mc{D},Q_T].
\end{equation*}
\end{theorem}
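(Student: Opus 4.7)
The approach parallels the proof of Theorem 7.2, but with the roles of the two boundary operators interchanged: rather than passing from a smaller $\b_k$ to a larger one, I would pass from the mixed operator $\mc{B}$ to the (formally ``larger'') Dirichlet operator $\mc{D}$ by using the $\mc{B}$-eigenfunction itself as a strict positive supersolution of the Dirichlet problem. The guiding observation is that trading the Robin-type condition $\p_\nu u + \b_k u = 0$ on $\G_{1,k}$ for the Dirichlet condition $u = 0$ tilts the boundary functional in a strict direction whenever it is evaluated at a function that is strictly positive on $\G_{1,k}$.

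Concretely, let $\Phi_B = (\Phi_{B1},\dots,\Phi_{BM}) \in \mathrm{int\,}P_U$ be a positive eigenfunction associated with $\s[\mc{P}-\mc{C},\mc{B},Q_T]$, so that $(\mc{P}-\mc{C}-\s[\mc{P}-\mc{C},\mc{B},Q_T]I_{\R^M})\Phi_B = 0$ in $Q_T$ and $\mc{B}\Phi_B = 0$ on $\p\O\times[0,T]$. Then I would evaluate the Dirichlet operator at $\Phi_B$: for each $j\in\{1,\dots,M\}$, $\mf{D}_j\Phi_{Bj} = \Phi_{Bj}$ on $\p\O$. This vanishes on $\G_{0,j}$ (because $\mf{B}_j\Phi_{Bj}=0$ reduces to Dirichlet there) and is strictly positive on $\G_{1,j}$, since $\Phi_B \in \mathrm{int\,}P_U$ forces $\Phi_{Bj}(x,t) > 0$ for every $x \in \O\cup\G_{1,j}$ and $t\in[0,T]$. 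Because $\G_{1,k}\neq\emptyset$ by hypothesis, the inequality $\mc{D}\Phi_B \geq 0$ is strict on $\G_{1,k}\times[0,T]$, so $\Phi_B$ is a positive strict supersolution of $(\mc{P}-\mc{C}-\s[\mc{P}-\mc{C},\mc{B},Q_T]I_{\R^M},\mc{D},Q_T)$.

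Invoking Theorem~\ref{th6.1} exactly as in the proof of Theorem~\ref{th7.2}, the existence of this strict positive supersolution yields
$$\s[\mc{P}-\mc{C}-\s[\mc{P}-\mc{C},\mc{B},Q_T]I_{\R^M},\mc{D},Q_T] > 0,$$
and rearranging gives $\s[\mc{P}-\mc{C},\mc{D},Q_T] - \s[\mc{P}-\mc{C},\mc{B},Q_T] > 0$, which is the desired strict inequality.

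The delicate point, and the one I expect to be the main obstacle, is the application of Theorem~\ref{th6.1}: condition (C1) as stated requires the supersolution $\Psi$ to lie in $\mathrm{int\,}P_U$, i.e., to satisfy $\mc{D}\Psi = 0$ on $\p\O\times[0,T]$, whereas our $\Phi_B$ satisfies only $\mc{D}\Phi_B \geq 0$ with strict inequality on a non-empty portion of the boundary. However, this is precisely the same ``relaxed'' usage of Theorem~\ref{th6.1} as in the proof of Theorem~\ref{th7.2}, where the $\mc{B}(\b_{k,1})$-eigenfunction $\Phi_1$ is used as a strict supersolution of the $\mc{B}(\b_{k,2})$-problem despite satisfying $\mc{B}(\b_{k,2})\Phi_1 > 0$ on $\G_{1,k}$ rather than vanishing there. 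This tacit extension of (C1)--(C5) to allow strict positive supersolutions with $\mc{B}\Psi \geq 0$ (strict somewhere) is consistent with the scalar notion of strict supersolution underlying Theorem~\ref{th12} and can be justified by a resolvent perturbation argument (subtracting a suitable solution of the transformed problem) to restore homogeneous boundary data without destroying the strict inequality in the interior.
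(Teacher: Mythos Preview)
Your proof is correct and follows essentially the same approach as the paper: use the $\mc{B}$-principal eigenfunction $\Phi_B$ as a positive strict supersolution of $(\mc{P}-\mc{C}-\s[\mc{P}-\mc{C},\mc{B},Q_T]I_{\R^M},\mc{D},Q_T)$, observing that $\mc{D}\Phi_B = \Phi_B > 0$ on $\G_{1,k}\times[0,T]$, and then invoke Theorem~\ref{th6.1}. Your additional discussion of the ``delicate point'' (that $\Phi_B$ does not satisfy the homogeneous Dirichlet condition required by the literal statement of (C1)) is a valid observation that the paper, as in its proof of Theorem~\ref{th7.2}, leaves implicit.
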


\begin{proof}
Let $\Phi_{[\mc{P} -\mc{C},\mc{B},Q_T]}\gg 0$ and  $\Phi_{[\mc{P} -\mc{C},\mc{D},Q_T]}\gg 0$ be two principal eigenfunctions associated with $\s [\mc{P} -\mc{C},\mc{B},Q_T]$ and $\s [\mc{P} -\mc{C},\mc{D},Q_T]$ respectively, and let denote by
$\Phi_{[\mc{P} -\mc{C},\mc{B},Q_T],k}$ and  $\Phi_{[\mc{P} -\mc{C},\mc{D},Q_T],k}$ their  $k$-th components.
By Corollary \ref{co31},
\[
 \Phi_{[\mc{P} -\mc{C},\mc{B},Q_T],k}(x) >0 \quad \hbox{for all}\quad x\in \O\cup \G_{1,k},\;\; t\in[0,T].
\]
Hence,
\[
 \mf{D}_k \Phi_{[\mc{P} -\mc{C},\mc{B},Q_T],k}(x)= \Phi_{[\mc{P} -\mc{C},\mc{B},Q_T],k}(x)>0 \quad \hbox{for all}\quad  x\in
 \G_{1,k},\;\; t\in[0,T],
\]
and, consequently,
\[
   \mc{D}\Phi_{[\mc{P} -\mc{C},\mc{B},Q_T]}= \Phi_{[\mc{P} -\mc{C},\mc{B},Q_T]}>0 \qquad \hbox{in}\;\; \p \O\times [0,T].
\]
Thus, $\Phi_{[\mc{P} -\mc{C},\mc{B},Q_T]}$ provides us with a positive strict supersolution of
\begin{equation*}
  (\mc{P}-\mc{C}-\s [\mc{P} -\mc{C},\mc{B},Q_T],\mc{D},Q_T)
\end{equation*}
and, therefore, Theorem \ref{th6.1} yields
\begin{equation*}
 0<\s[\mc{P} -\mc{C}-\s [\mc{P} -\mc{C},\mc{B},Q_T],\mc{D},Q_T]=
 \s[\mc{P} -\mc{C},\mc{D},Q_T]-\s [\mc{P} -\mc{C},\mc{B},Q_T].
\end{equation*}
This concludes the proof.
\end{proof}

Suppose $\G_{1,k}\neq \emptyset$ for some $1\leq k \leq M$. For every proper subdomain $\O_0$ of class $\mathcal{C}^{2+\t}$ of $\O$ with
\begin{equation}
\label{7.2}
 \rm{dist\,}(\G_{1,k},\partial \O_0\cap \O)>0,
\end{equation}
we will denote by $\mf{B}_k[\b_k,\O_0]$, or simply by $\mf{B}_k[\O_0]$, the boundary operator
\begin{equation}
\label{7.3}
    \mf{B}_k[\O_0] \xi := \left\{ \begin{array}{ll}
    \xi \qquad & \hbox{on } \;\;\partial \O_0 \cap \O \\     \mf{B}_k  \xi \qquad &
   \hbox{on } \;\; \;\partial \O_0 \cap \partial\O  \end{array} \right.
\end{equation}
for each $\xi\in \mathcal{C}(\G_{0,k})\oplus \mathcal{C}^1(\O\cup\G_{1,k})$, where $\b_k\in \mathcal{C}^{1+\t}(\G_{1,k})$.
If $\bar \O_0 \subset \O$, then $\partial\O_0 \subset \O$ and hence,
\[
 \mf{B}_k[\O_0] \xi = \xi,
\]
for every $\xi\in \mathcal{C}(\G_{0,k})\oplus \mathcal{C}^1(\O\cup\G_{1,k})$, i.e, $\mf{B}_k[\O_0]$ becomes the Dirichlet boundary operator $\mf{D}_k$ in $\O_0$.
\par
When $\G_{1,k}=\emptyset$, i.e, $\mf{B}_k=\mf{D}_k$, then we define
\begin{equation}
\label{7.4}
 \mf{B}_k[\O_0] :=\mf{D}_k.
\end{equation}
Note that \eqref{7.3} becomes \eqref{7.4} if $\G_{1,k}=\emptyset$. Finally, we consider
\begin{equation}
\label{vii.5}
  \mc{B}[\O_0]:=(\mf{B}_1[\O_0],\ldots,\mf{B}_M[\O_0]).
\end{equation}
The following result establishes the monotonicity of the principal eigenvalue with respect to $\O$.
\begin{theorem}
\label{th7.4}
 Let $\mc{C}\in \mf{M}_M(F)$ be a cooperative matrix and $\O_0$ a proper subdomain of $\O$ of class $\mc{C}^{2+\t}$ satisfying \eqref{7.2} whenever $\G_{1,k}\neq \emptyset $. Then,
\begin{equation*}
     \s [\mc{P} -\mc{C},\mc{B},Q_T]< \s [\mc{P} -\mc{C},\mc{B}[\O_0],\O_0 \times [0,T]],
\end{equation*}
where $\mc{B}[\O_0]$ is the boundary operator defined by \eqref{vii.5}.
\end{theorem}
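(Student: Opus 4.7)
The plan is to imitate the proof of Theorem \ref{th7.3}, producing a strict positive supersolution on the smaller cylinder $\Omega_0\times[0,T]$ out of the principal eigenfunction on the larger one and then invoking Theorem \ref{th6.1}. Set $\sigma_1:=\sigma[\mathcal{P}-\mathcal{C},\mathcal{B},Q_T]$ and let $\Phi=(\Phi_1,\ldots,\Phi_M)\in\mathrm{int\,}P_U$ be an associated principal eigenfunction. By the description of $\mathrm{int\,}P_{E_{\mathfrak{B}_k}}$ recalled before Theorem \ref{th6.1}, each $\Phi_k$ satisfies $\Phi_k(x,t)>0$ for all $(x,t)\in(\Omega\cup\Gamma_{1,k})\times[0,T]$ and $\partial_\nu\Phi_k(x,t)<0$ on $\Gamma_{0,k}\times[0,T]$, and moreover
\[
(\mathcal{P}-\mathcal{C}-\sigma_1 I_{\mathbb{R}^M})\Phi=0\quad\text{in }Q_T,\qquad \mathcal{B}\Phi=0\quad\text{on }\partial\Omega\times[0,T].
\]

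Next I would restrict $\Phi$ to $\bar\Omega_0\times[0,T]$ and check, component by component, that it is a strict positive supersolution of the problem $(\mathcal{P}-\mathcal{C}-\sigma_1 I_{\mathbb{R}^M},\mathcal{B}[\Omega_0],\Omega_0\times[0,T])$. Inside $\Omega_0\times[0,T]$ the equation is satisfied with equality. On $\partial\Omega_0\cap\partial\Omega$ the definition of $\mathfrak{B}_k[\Omega_0]$ gives $\mathfrak{B}_k[\Omega_0]\Phi_k=\mathfrak{B}_k\Phi_k=0$. On the internal piece $\partial\Omega_0\cap\Omega$, by definition $\mathfrak{B}_k[\Omega_0]\Phi_k=\Phi_k$; since $\partial\Omega_0\cap\Omega\subset\Omega\cup\Gamma_{1,k}$ (using \eqref{7.2} whenever $\Gamma_{1,k}\neq\emptyset$, so that the internal part lies a positive distance away from $\Gamma_{1,k}$ and hence genuinely inside $\Omega$), we conclude $\mathfrak{B}_k[\Omega_0]\Phi_k=\Phi_k>0$ there. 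This provides the needed strict boundary inequality, exactly as in the proof of Theorem \ref{th7.3}.

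Having such a strict positive supersolution, Theorem \ref{th6.1} applied on the cylinder $\Omega_0\times[0,T]$ (whose regularity requirements are guaranteed by $\Omega_0\in\mathcal{C}^{2+\theta}$ together with \eqref{7.2}) yields
\[
\sigma[\mathcal{P}-\mathcal{C}-\sigma_1 I_{\mathbb{R}^M},\mathcal{B}[\Omega_0],\Omega_0\times[0,T]]>0,
\]
and the elementary shift identity already used in Theorem \ref{th7.1} converts this into
\[
\sigma[\mathcal{P}-\mathcal{C},\mathcal{B}[\Omega_0],\Omega_0\times[0,T]]>\sigma_1=\sigma[\mathcal{P}-\mathcal{C},\mathcal{B},Q_T],
\]
which is the desired strict inequality. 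The main obstacle I anticipate is the bookkeeping needed to confirm that $\mathcal{B}[\Omega_0]$ genuinely fits in the framework of Theorem \ref{th6.1}: the internal Dirichlet portion $\partial\Omega_0\cap\Omega$ and the inherited mixed portion $\partial\Omega_0\cap\partial\Omega$ must not interact pathologically, and this is precisely what the separation hypothesis \eqref{7.2} is designed to secure. Once that compatibility is in hand, the argument is a direct transcription of the strict-supersolution method used already in Theorem \ref{th7.3}.
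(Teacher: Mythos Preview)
Your proposal is correct and follows essentially the same approach as the paper: restrict the principal eigenfunction $\Phi$ on $Q_T$ to $\Omega_0\times[0,T]$, verify it is a positive strict supersolution of $(\mathcal{P}-\mathcal{C}-\sigma_1 I_{\R^M},\mathcal{B}[\Omega_0],\Omega_0\times[0,T])$ with the strictness coming from $\Phi_k>0$ on the nonempty internal boundary $\partial\Omega_0\cap\Omega$, and then invoke Theorem~\ref{th6.1} together with the shift identity. One minor remark: your parenthetical justification that $\partial\Omega_0\cap\Omega\subset\Omega\cup\Gamma_{1,k}$ requires \eqref{7.2} is unnecessary, since $\partial\Omega_0\cap\Omega\subset\Omega$ is tautological; the role of \eqref{7.2} is rather, as you note at the end, to ensure that $\mathcal{B}[\Omega_0]$ is a boundary operator of the admissible mixed type so that Theorem~\ref{th6.1} applies on the smaller cylinder.
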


\begin{proof}
Let $\Phi:=(\Phi_1,\ldots,\Phi_M)\gg 0$ be a principal eigenfunction associated with $\s [\mc{P} -\mc{C},\mc{B},Q_T]$. By
definition,
\[
  (\mc{P}-\mc{C}-\s [\mc{P} -\mc{C},\mc{B},Q_T])\Phi=0  \quad \hbox{in} \;\; \O_0 \times [0,T],
\]
because $\O_0\subset\O$. Moreover,  for every $1\leq \ell \leq M$, we have that
\begin{equation*}
 \left\{ \begin{array}{ll} \Phi_\ell >0 & \quad \hbox{in} \;\; (\p \O_0\cap \O)\times [0,T],\\ \Phi_\ell =0 & \quad \hbox{in} \;\; (\p \O_0\cap \G_{0,\ell})\times [0,T],\\ \p_{\nu}\Phi_\ell + \b_\ell\Phi_\ell =0 & \quad \hbox{in} \;\; (\p \O_0\cap \G_{1,\ell})\times [0,T], \end{array}\right.
\end{equation*}
by construction. Note that $\p \O_0\cap \O \neq \emptyset$, because  $\O_0 \varsubsetneq\O$. Thus, $\Phi|_{\O_0}$ is a positive strict supersolution of
\[
(\mc{P}-\mc{C}-\s [\mc{P} -\mc{C},\mc{B},Q_T],\mc{B}[\O_0],\O_0\times [0,T]).
\]
Therefore, it follows from Theorem \ref{th6.1} that
\begin{align*}
  0 & < \s [\mc{P}-\mc{C}-\s [\mc{P} -\mc{C},\mc{B},Q_T],\mc{B}[\O_0],\O_0\times [0,T]] \cr
   & = \s [\mc{P} -\mc{C},\mc{B}[\O_0],\O_0 \times [0,T]]-\s [\mc{P} -\mc{C},\mc{B},Q_T].
\end{align*}
This ends the proof.
\end{proof}
Finally, as an immediate consequence of Theorems \ref{th7.2} and \ref{th7.4} the next result holds.
\begin{corollary}
\label{co7.3}
 Let $\mc{C}\in \mf{M}_M(F)$ be a cooperative matrix and suppose $\G_{1,k}\neq \emptyset $ for some $k\in \{1,...,M\}$. Then,
for every subdomain of class $\mathcal{C}^{2+\t}$ of $\O$, $\O_0$, satisfying  \eqref{7.2} if $\G_{1,k}\neq \emptyset$, and
any $\b_{k,i} \in \mathcal{C}^{1+\t}(\G_{1,k})$, $i=1, 2$, such that $\b_{k,1}<\b_{k,2}$,
\begin{equation}
\label{7.5}
   \s [\mc{P} -\mc{C},\mc{B}[\b_{k,1},\O],Q_T]< \s [\mc{P} -\mc{C},\mc{B}[\b_{k,2},\O_0],\O_0\times[0,T]].
\end{equation}
The same conclusion holds if $\b_{k,1}\leq\b_{k,2}$ and $\O_0\subsetneq\O$.
\end{corollary}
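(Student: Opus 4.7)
The plan is to chain the two monotonicity results already established, namely Theorem \ref{th7.2} (monotonicity with respect to $\b_k$) and Theorem \ref{th7.4} (monotonicity with respect to the underlying domain), through the intermediate eigenvalue
\[
  \s[\mc{P}-\mc{C},\mc{B}[\b_{k,2},\O],Q_T].
\]

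First, since $\b_{k,1}<\b_{k,2}$ on $\G_{1,k}$ and the remaining boundary data are kept fixed, a direct application of Theorem \ref{th7.2} to the operator $\mc{P}-\mc{C}$ on $Q_T$ with the $k$-th weight changed from $\b_{k,1}$ to $\b_{k,2}$ yields the strict inequality
\[
  \s[\mc{P}-\mc{C},\mc{B}[\b_{k,1},\O],Q_T] < \s[\mc{P}-\mc{C},\mc{B}[\b_{k,2},\O],Q_T].
\]
Next, because $\O_0$ is a proper $\mc{C}^{2+\t}$ subdomain of $\O$ satisfying the separation condition \eqref{7.2}, the boundary operator $\mc{B}[\b_{k,2},\O_0]$ defined by \eqref{7.3}--\eqref{vii.5} is well posed (in particular, the non-trivial Robin-type component of $\mf{B}_k[\O_0]$ only acts on $\partial\O_0\cap\G_{1,k}$, which is a well-separated piece of $\partial\O_0$, while $\mf{B}_k[\O_0]$ is Dirichlet on the new interior piece $\partial\O_0\cap\O$). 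Hence Theorem \ref{th7.4} applies with coupling matrix $\mc{C}$ and weight $\b_{k,2}$ on $\O_0\subset\O$, giving
\[
   \s[\mc{P}-\mc{C},\mc{B}[\b_{k,2},\O],Q_T] < \s[\mc{P}-\mc{C},\mc{B}[\b_{k,2},\O_0],\O_0\times[0,T]].
\]
Concatenating these two strict inequalities produces \eqref{7.5}.

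For the second assertion, in which the hypothesis on $\b_k$ is relaxed to $\b_{k,1}\leq\b_{k,2}$ but $\O_0\subsetneq\O$ is required, I would argue in the same two steps: the first inequality now becomes non-strict (because the strict part of the proof of Theorem \ref{th7.2} uses that $\b_{k,2}-\b_{k,1}>0$ somewhere), but an inspection of that proof shows that, with only the weak inequality $\b_{k,1}\leq\b_{k,2}$, the principal eigenfunction $\Phi_{[\mc{P}-\mc{C},\mc{B}(\b_{k,1}),Q_T]}$ is still a (possibly non-strict) supersolution of the $\b_{k,2}$-problem, so by Theorem \ref{th6.1} we obtain at least $\s[\mc{P}-\mc{C},\mc{B}[\b_{k,1},\O],Q_T]\leq \s[\mc{P}-\mc{C},\mc{B}[\b_{k,2},\O],Q_T]$. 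Then the domain monotonicity from Theorem \ref{th7.4}, which is strict whenever $\O_0\subsetneq\O$, promotes the chain to a strict inequality, finishing the proof.

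The only subtle point, and the step I would be most careful about, is the first one in the relaxed case: we need the non-strict version of Theorem \ref{th7.2}, which is not an explicit statement but follows immediately from the equivalence (C1) $\Leftrightarrow$ (C5) in Theorem \ref{th6.1} applied to the shifted operator $\mc{P}-\mc{C}-\s[\mc{P}-\mc{C},\mc{B}(\b_{k,1}),Q_T]\,I_{\R^M}$. All the rest is a bookkeeping assembly of the previously proved strict monotonicity results.
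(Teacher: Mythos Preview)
Your proposal is correct and matches the paper's approach: the paper gives no explicit proof, simply recording the corollary as ``an immediate consequence of Theorems \ref{th7.2} and \ref{th7.4}'', and your chaining through the intermediate eigenvalue $\s[\mc{P}-\mc{C},\mc{B}[\b_{k,2},\O],Q_T]$ is exactly that. One small point: in the first case the statement does not require $\O_0$ to be a \emph{proper} subdomain, so if $\O_0=\O$ your second step degenerates to an equality rather than a strict inequality --- but this is harmless since the first step already provides the strict gap.
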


\section{The case $\mc{P}_1=\cdots=\mc{P}_M$ with $\mf{B}_1=\cdots = \mf{B}_M$}
Throughout this section we assume that
\[
 \mc{P}_1(x,t;D)=...=\mc{P}_M(x,t;D),\qquad \mf{B}_1=\cdots = \mf{B}_M.
\]
The following result characterizes whether or not the problem \eqref{vi.5} satisfies the strong maximum principle in the especial case
when $\mc{C}(x,t)$ is non-spatial.

\begin{theorem}
\label{th8.1}
Suppose  $C(x,t)\equiv C(t)$. Then, the problem  \eqref{vi.5} satisfies the strong maximum principle if and only if
\begin{equation}
\label{8.1}
 \l_1[\mc{P}_1,\mf{B}_1,Q_T] >\frac{1}{T} \log\mu_1[\mc{C}]
\end{equation}
where  $\mu_1[\mc{C}]$ stands for the principal eigenvalue of the monodromy matrix of the system
\begin{equation}
\label{8.2}
 \alpha'(t)=\mc{C}(t)\alpha(t),
\end{equation}
whose existence is guaranteed by Theorem 4.7 de M. A. Krasnoselskij \cite{Kr}.
\end{theorem}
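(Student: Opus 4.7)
The strategy is to exploit the separability of the problem when the coupling matrix is spatially homogeneous. By Theorem \ref{th6.1}, the strong maximum principle for \eqref{vi.5} is equivalent to the positivity of the principal eigenvalue $\sigma[\mc{P}-\mc{C},\mc{B},Q_T]$. I will compute this principal eigenvalue explicitly in terms of the scalar principal eigenvalue $\lambda_1:=\lambda_1[\mc{P}_1,\mf{B}_1,Q_T]$ and of $\mu_1[\mc{C}]$ by constructing a positive eigenfunction of product form.

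Let $\varphi\in E_{\mf{B}_1}$ be the (strictly) positive principal eigenfunction given by $\mc{P}_1\varphi=\lambda_1\varphi$ in $Q_T$ and $\mf{B}_1\varphi=0$ on $\partial\O\times[0,T]$. For a $T$-periodic positive vector function $\alpha(t)=(\alpha_1(t),\ldots,\alpha_M(t))^T\in\R^M$ to be determined, set $\Phi(x,t):=\varphi(x,t)\alpha(t)$, so that each component $\Phi_k=\varphi\alpha_k$ automatically satisfies $\mf{B}_k\Phi_k=\alpha_k\mf{B}_1\varphi=0$ on $\partial\O\times[0,T]$ and lies in $\mathrm{int\,}P_{E_{\mf{B}_k}}$ provided $\alpha_k>0$. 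Since $\mc{P}_k=\mc{P}_1$ for all $k$, a direct computation using Leibniz's rule gives
\begin{equation*}
(\mc{P}-\mc{C})\Phi=\varphi\bigl[\alpha'(t)+(\lambda_1 I_{\R^M}-\mc{C}(t))\alpha(t)\bigr].
\end{equation*}
Thus $\Phi$ is an eigenfunction of $(\mc{P}-\mc{C},\mc{B},Q_T)$ with eigenvalue $\sigma$ if and only if $\alpha$ solves the system
\begin{equation*}
\alpha'(t)=\bigl(\mc{C}(t)-(\lambda_1-\sigma)I_{\R^M}\bigr)\alpha(t),\qquad \alpha(T)=\alpha(0),\qquad \alpha(t)\gg 0.
\end{equation*}

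Now I reduce this periodic system to the monodromy problem for \eqref{8.2}. Writing $\gamma:=\lambda_1-\sigma$ and performing the change of variable $\beta(t):=e^{\gamma t}\alpha(t)$, one sees immediately that $\beta$ satisfies \eqref{8.2}, and the $T$-periodicity of $\alpha$ translates into the spectral equation $M(T)\beta(0)=e^{\gamma T}\beta(0)$, where $M(T)$ is the monodromy matrix of \eqref{8.2}. Since $\mc{C}(t)$ is cooperative, $M(T)$ is a strictly positive matrix (its entries coincide, up to off-diagonal sign, with those of the flow of a cooperative linear ODE system), so by the theorem of Krasnoselskij \cite{Kr} cited in the statement there is a unique positive eigenvector $\beta(0)\gg 0$ corresponding to the principal (Perron--Frobenius) eigenvalue $\mu_1[\mc{C}]>0$. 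This forces $e^{\gamma T}=\mu_1[\mc{C}]$, i.e.,
\begin{equation*}
\sigma=\lambda_1-\tfrac{1}{T}\log\mu_1[\mc{C}].
\end{equation*}
By Theorem \ref{th6.1}(C5), this $\sigma$ is indeed the principal eigenvalue $\sigma[\mc{P}-\mc{C},\mc{B},Q_T]$, because the constructed $\Phi=\varphi\alpha$ lies in $\mathrm{int\,}P_U$ and the principal eigenfunction is the unique (up to scalar) positive eigenfunction.

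Combining the identity $\sigma[\mc{P}-\mc{C},\mc{B},Q_T]=\lambda_1[\mc{P}_1,\mf{B}_1,Q_T]-\tfrac{1}{T}\log\mu_1[\mc{C}]$ with the equivalence $\mathrm{(C3)}\Leftrightarrow\mathrm{(C5)}$ of Theorem \ref{th6.1}, the strong maximum principle for \eqref{vi.5} is equivalent to $\sigma>0$, which is exactly \eqref{8.1}. The main technical obstacle is establishing uniqueness of the principal eigenpair, and in particular showing that the candidate $\Phi=\varphi\alpha(t)$ is truly the principal eigenfunction rather than some other eigenfunction; this is settled cleanly by the strong positivity of the resolvent $(\mc{P}-\mc{C})^{-1}$ and the generalized Krein--Rutman theorem (Theorem \ref{thKR}), which guarantees that positivity of the eigenfunction identifies it uniquely with the spectral radius of the resolvent, hence with $\sigma[\mc{P}-\mc{C},\mc{B},Q_T]$.
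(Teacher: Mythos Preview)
Your proof is correct and follows essentially the same route as the paper: both use the product ansatz $\Phi=\varphi(x,t)\alpha(t)$, reduce the eigenvalue problem to a $T$-periodic linear ODE for $\alpha$, and identify the required Floquet multiplier with $\mu_1[\mc{C}]$ via the uniqueness of the Perron eigenvalue of the monodromy matrix. The only cosmetic difference is that you perform the substitution $\beta(t)=e^{\gamma t}\alpha(t)$ directly, whereas the paper writes the solution operator as $V(t)=e^{(\sigma-\lambda_1)t}W(t)$; these are equivalent formulations of the same computation.
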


\begin{proof}
Let $\s[\mc{P}-\mc{C};\mf{B},Q_T]$ be the principal eigenvalue of
\begin{equation*}
   \mc{P} \phi = \mc{C}(t) \phi+ \sigma \phi ,\quad \phi \in U.
\end{equation*}
We claim that
\begin{equation}
\label{8.3}
  \s[\mc{P}-\mc{C},\mf{B},Q_T] =\l_1[\mc{P}_1,\mf{B}_1,Q_T]-\frac{1}{T} \log\mu_1[\mc{C}].
\end{equation}
Thanks to Theorem  \ref{th6.1}, the proof of Theorem \ref{th8.1} is an immediate consequence of \eqref{8.3}.
\par
Let $\varphi \in \mathrm{int\,}\mc {P}_E$, $\mf{B}\v=0$,  be a principal eigenfunction associated  to $\l_1[\mc{P}_1;\mf{B}_1,Q_T]$. To establish \eqref{8.3}, we shall look for values of
\begin{equation*}
  \alpha(t)=(\alpha_1(t),...,\alpha_M(t)), \;\; \alpha_i(t)>0, \;\; \alpha_i(t+T)=\alpha_i(t), \;\; t\in \R, \;\; i=1,...,M,
\end{equation*}
so that
\begin{equation}
\label{8.4}
\Phi(x,t):=(\alpha_1(t)\varphi(x,t),...,\alpha_M(t)\varphi(x,t))
\end{equation}
satisfy
\begin{equation}
\label{8.5}
  \mc{P} \Phi = \mc{C}(t) \Phi + \s[\mc{P}-\mc{C},\mf{B},Q_T]\Phi.
\end{equation}
The function $\Phi(x,t)$ satisfies \eqref{8.5} if and only if
\begin{align*}
\varphi(x,t)\alpha'(t)   + \l_1[\mc{P}_1,\mf{B}_1,Q_T]\varphi(x,t)\alpha(t) & =\varphi(x,t)\mc{C}(t)\alpha(t)\\ & \quad +
 \s[\mc{P}-\mc{C},\mf{B},Q_T] \varphi(x,t)\alpha(t)
\end{align*}
for every $(x,t)\in Q_T$, which can be equivalently written down as
\begin{equation}
\label{8.6}
   \alpha'(t)=(\mc{C}(t)+\s[\mc{P}-\mc{C},\mf{B},Q_T] - \l_1[\mc{P}_1,\mf{B}_1,Q_T])\,\alpha(t),\qquad t \in [0,T].
\end{equation}
Subsequently, for every $x\in\R^N$, we denote by $\a(t;x)$ the unique solution of \eqref{8.2} such that $\a(0;x)=x$. Let $W(t)$ be the operator  defined by
\[
  W(t)x= \a(t;x)
\]
for all $t \in\R$  and $x \in\R^N$, i.e. $W(t)$ is the translation operator along the trajectories of \eqref{8.2}, or simply the Poincar\'e map of \eqref{8.2}. Obviously, $W(t)$ is a fundamental matrix solutions of the system \eqref{8.2} with $W(0)=I$, and the matrix
\begin{equation*}
  V(t):=e^{(\s[\mc{P}-\mc{C},\mc{B},Q_T] -\l_1[\mc{P}_1,\mf{B}_1,Q_T])t}W(t)
\end{equation*}
is the solution operator of \eqref{8.6}. Consequently,
\begin{equation}
\label{8.7}
  V(T):=e^{(\s[\mc{P}-\mc{C},\mc{B},Q_T] -\l_1[\mc{P}_1,\mf{B}_1,Q_T])T}W(T)
\end{equation}
is the monodromy operator of \eqref{8.6}.
\par
The system \eqref{8.6} has a positive $T$-periodic solution if and only if there exists some $x >0$ such that $V(T)x=x$. Thanks to \eqref{8.7}, this is equivalent to
\begin{equation*}
  W(T)x = e^{-(\s[\mc{P}-\mc{C},\mc{B},Q_T] -\l_1[\mc{P}_1,\mf{B}_1,Q_T])T} x.
\end{equation*}
Therefore, by the uniqueness of the principal eigenvalue for the monodromy operator, as established by
Theorem 4.6 of M. A. Krasnoselskij \cite{Kr}, it becomes apparent that
\begin{equation*}
\mu_1[\mc{C}]=e^{-(\s[\mc{P}-\mc{C},\mc{B},Q_T] -\l_1[\mc{P}_1,\mf{B}_1,Q_T])T}.
\end{equation*}
Therefore, \eqref{8.3} holds. This ends the proof.
\end{proof}

\vspace{0.4cm}

Estimating the  principal eigenvalue  of $W(T)$ is far from being, in general, an easy task. However, in the especial case when
\begin{equation}
\label{8.8}
  \int_0^s \mc{C} \; \int_0^t \mc{C}  = \int_0^t \mc{C}  \; \int_0^s \mc{C}  \quad \hbox{for every} \;\; t,s \in \R,
\end{equation}
it is well known that
\begin{equation}
\label{8.9}
  W(t)= \exp  \int_0^t \mc{C},\qquad t \in \R.
\end{equation}
Thus, the next result holds.

\begin{theorem}
\label{th8.2}
Suppose   $\mc{C}(x,t)\equiv\mc{C}(t)$ and \eqref{8.8} holds., and consider the  matrix
\begin{equation}
\label{8.10}
   \mc{M}:=\l_1[\mc{P}_1,\mf{B}_1,Q_T] I_{\R^M} -\frac{1}{T} \int_0^T\mc{C}(s)\, ds.
\end{equation}
Then, the following conditions are equivalent:
\begin{itemize}
\item[{\rm (i)}]  The problem \eqref{vi.5} satisfies the strong maximum principle.
\item[{\rm (ii)}] The following estimate holds
\begin{equation*}
  \l_1[\mc{P}_1,\mf{B}_1,Q_T]> \mathrm{spr\,}\left(\frac{1}{T} \int_0^T \mc{C}(s)\,ds \right),
\end{equation*}
where $\mathrm{spr} A$ stands for the spectral radius of $A$.
\item[{\rm (iii)}] The matrix $\mc{M}^{-1}$ define an strong positive endomorphism of $\R^M$.
\item[{\rm (iv)}] The principal minors of the matrix $\mc{M}$ have a positive determinant.
\item[{\rm (v)}] The $M$ first minors of $\mc{M}$ have a positive determinant.
\end{itemize}
\end{theorem}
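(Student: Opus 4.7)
The plan is to combine Theorem~\ref{th8.1} with the explicit formula \eqref{8.9} for $W(T)$, and then to invoke the classical theory of non-singular M-matrices to convert the spectral inequality into the algebraic conditions (iii)--(v).

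First, I would substitute $W(T)=\exp\bigl(T\bar{\mc{C}}\bigr)$, where $\bar{\mc{C}}:=\tfrac{1}{T}\int_0^T\mc{C}(s)\,ds$, into the characterization \eqref{8.3} established in the proof of Theorem~\ref{th8.1}. Since $\mc{C}(t)$ is cooperative for every $t$, the matrix $\bar{\mc{C}}$ is a Metzler matrix with strictly positive off-diagonal entries; equivalently, $\bar{\mc{C}}+\alpha I_{\R^M}$ is an irreducible non-negative matrix for sufficiently large $\alpha$. Exponentiation then yields that $W(T)$ is a matrix with strictly positive entries, and Perron--Frobenius supplies a unique principal eigenvalue $\mu_1[\mc{C}]$ equal to its spectral radius, with a positive eigenvector. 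Since the eigenvalues of $W(T)$ are $e^{T\lambda}$ as $\lambda$ ranges over the spectrum of $\bar{\mc{C}}$, and since the Perron eigenvalue of $W(T)$ corresponds to the eigenvalue of $\bar{\mc{C}}$ with the largest real part---which, by the same Perron--Frobenius argument applied to $\bar{\mc{C}}+\alpha I_{\R^M}$, is itself real and equal to $\mathrm{spr}(\bar{\mc{C}})$---it follows that
\[
  \tfrac{1}{T}\log\mu_1[\mc{C}]=\mathrm{spr}(\bar{\mc{C}}).
\]
Inserting this into Theorem~\ref{th8.1} immediately delivers (i)$\Leftrightarrow$(ii).

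For the algebraic equivalences, I would observe that when (ii) holds, $\mc{M}=\l_1[\mc{P}_1,\mf{B}_1,Q_T]I_{\R^M}-\bar{\mc{C}}$ is a non-singular M-matrix: its off-diagonal entries are strictly negative and its spectral abscissa is positive because $\mathrm{spr}(\bar{\mc{C}})<\l_1[\mc{P}_1,\mf{B}_1,Q_T]$. The classical characterizations of non-singular M-matrices then deliver (ii)$\Leftrightarrow$(iii)$\Leftrightarrow$(iv)$\Leftrightarrow$(v): inverse-positivity (promoted from $\mc{M}^{-1}\geq 0$ to strong positivity $\mc{M}^{-1}(P_{\R^M}\setminus\{0\})\subset\mathrm{int\,}P_{\R^M}$ by the irreducibility coming from $c_{ij}>0$ for $i\neq j$), positivity of all principal minors, and positivity of the $M$ leading principal minors are all mutually equivalent formulations of the M-matrix property. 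Alternatively, one may obtain (iii) directly by expanding $\mc{M}^{-1}=\sum_{k\geq 0}\l_1^{-k-1}\bar{\mc{C}}^{k}$ when the series converges, i.e., exactly when $\l_1>\mathrm{spr}(\bar{\mc{C}})$.

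The only delicate point---hardly an obstacle, but one that must be handled carefully---is the identification $\tfrac{1}{T}\log\mu_1[\mc{C}]=\mathrm{spr}(\bar{\mc{C}})$. Here \emph{principal} must be read as Perron eigenvalue (spectral abscissa) for the Metzler matrix $\bar{\mc{C}}$, and as spectral radius for the strictly positive matrix $W(T)$; the two notions agree on $W(T)$, and via the spectral mapping theorem for $\exp$ they pair correctly with the spectral abscissa of $\bar{\mc{C}}$. The rest of the argument is entirely algebraic and relies only on standard M-matrix equivalences.
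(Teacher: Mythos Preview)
Your proposal is correct and follows essentially the same route as the paper: compute $W(T)=\exp\bigl(\int_0^T\mc{C}\bigr)$ from \eqref{8.9}, use the spectral mapping theorem to identify $\tfrac{1}{T}\log\mu_1[\mc{C}]$ with the Perron eigenvalue of $\bar{\mc{C}}$, invoke \eqref{8.3} and Theorem~\ref{th6.1} for (i)$\Leftrightarrow$(ii), and then appeal to the standard M-matrix characterizations for (ii)$\Leftrightarrow$(iii)$\Leftrightarrow$(iv)$\Leftrightarrow$(v). You are in fact more careful than the paper on one point: you distinguish the spectral abscissa of the Metzler matrix $\bar{\mc{C}}$ from its spectral radius and explain why the former is what matters, whereas the paper writes $\mathrm{spr}$ throughout; your Neumann-series remark for (iii) is an extra observation not in the paper, but it does not change the overall strategy.
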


\begin{proof}
Condition (ii) implies that the matrix defined in \eqref{8.10} is a non-singular M-matrix. Some well known properties of M-matrices are collected in F. R. Gantmacher \cite{Gantmacher}. From these properties
the equivalence between (ii), (iv) and (v) is easy. The implications (iii) $\Rightarrow$ (iv) and (iv) $\Rightarrow$ (iii)  were already established by  D. G. Figueiredo and E. Mittidieri  \cite{FM}  and  J. L\'{o}pez-G\'omez  and M. Molina-Meyer \cite{LM}, respectively. It remains to show that (i) $\Leftrightarrow$ (ii).
\par
As we are supposing  \eqref{8.8}, \eqref{8.9} holds. Thus, by the spectral mapping theorem (see, e.g., Theorem 2.4.2 of \cite{LMB}),
\begin{equation*}
\mu_1[\mc{C}]=\mathrm{spr\,} W(T)=\exp \left(\mathrm{spr} \int_0^T \mc{C}(s)\,ds \right).
\end{equation*}
Therefore, by \eqref{8.3}, we find that
\begin{equation*}
\s[\mc{P}-\mc{C},\mc{B},Q_T] =\l_1[\mc{P}_1,\mf{B}_1,Q_T] -\frac{1}{T}\; \mathrm{spr} \int_0^T \mc{C}(s)\,ds.
\end{equation*}
As  Theorem \ref{th6.1} ensures us that  \eqref{8.10} satisfies the strong maximum principle if and only if
\[
  \s[\mc{P}-\mc{C},\mc{B},Q_T] >0
\]
the proof is complete.
\end{proof}

We conclude this paper with the next consequence of Theorems \ref{th7.1} and \ref{th8.1}.

\begin{corollary}
\label{co8.1}
Suppose $\mc{C}(x,t)$ is a general cooperative matrix and set
\[
  \mc{C}_S(t):=\left( \max_{x\in\bar\O}c_{ij}(x,t) \right)_{1\leq i, j\leq M}\qquad t\in [0,T].
\]
Then, \eqref{vi.5} satisfies the maximum principle if
\begin{equation}
\label{8.11}
   \l_1[\mc{P}_1,\mf{B}_1,Q_T]> \frac{1}{T} \log\mu_1[\mc{C}_S].
\end{equation}
\end{corollary}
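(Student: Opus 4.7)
The plan is to sandwich the principal eigenvalue of the original system between $0$ and the principal eigenvalue of an auxiliary system whose matrix of coupling coefficients is spatially homogeneous and dominates $\mc{C}(x,t)$. First, I would verify that $\mc{C}_S(t)$ is an admissible $t$-dependent cooperative matrix: since each $c_{ij}\in F\subset\mc{C}^{\t,\t/2}(\bar\O\times\R;\R)$ and is $T$-periodic in $t$, its spatial maximum $\tilde c_{ij}(t):=\max_{x\in\bar\O}c_{ij}(x,t)$ inherits $T$-periodicity and the required H\"older regularity in $t$, so $\mc{C}_S\in\mf{M}_M(F)$. Cooperativity, i.e.\ $\tilde c_{ij}(t)>0$ for $i\neq j$ on $[0,T]$, follows at once from \eqref{vi.4}. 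By construction, $\mc{C}_S(t)\geq \mc{C}(x,t)$ entry-wise for all $(x,t)\in\bar Q_T$.

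Next, I would invoke the monotonicity of the principal eigenvalue with respect to the coupling matrix (Theorem \ref{th7.1}, together with the continuity supplied by Corollary \ref{co7.1} to handle the non-strict case) to deduce
\begin{equation*}
  \s[\mc{P}-\mc{C}_S,\mc{B},Q_T]\;\leq\;\s[\mc{P}-\mc{C},\mc{B},Q_T].
\end{equation*}
On the other hand, $\mc{C}_S$ depends only on $t$, so Theorem \ref{th8.1} applies to the system with coupling $\mc{C}_S$. Formula \eqref{8.3} from the proof of that theorem yields the explicit identity
\begin{equation*}
  \s[\mc{P}-\mc{C}_S,\mc{B},Q_T]\;=\;\l_1[\mc{P}_1,\mf{B}_1,Q_T]-\tfrac{1}{T}\log\mu_1[\mc{C}_S].
\end{equation*}

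Combining the last two displays, the hypothesis \eqref{8.11} becomes exactly
$\s[\mc{P}-\mc{C}_S,\mc{B},Q_T]>0$, whence $\s[\mc{P}-\mc{C},\mc{B},Q_T]>0$. This is condition (C5) of Theorem \ref{th6.1}, which is equivalent to (C3)--(C4), namely the (strong) maximum principle for \eqref{vi.5}. The argument therefore reduces Corollary \ref{co8.1} to a clean two-line chain: Theorem \ref{th7.1} passes from $\mc{C}$ to the majorant $\mc{C}_S$, and Theorem \ref{th8.1} converts the hypothesis into positivity of $\s[\mc{P}-\mc{C}_S,\mc{B},Q_T]$.

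The only delicate point I anticipate is the verification that $\mc{C}_S\in\mf{M}_M(F)$; the H\"older regularity in $t$ of a spatial maximum of jointly H\"older functions is standard but merits an explicit sentence, since Theorem \ref{th8.1} was stated for matrices in $F$. Once this is dispatched, the remainder of the proof is merely the concatenation of monotonicity, the explicit formula \eqref{8.3}, and the characterization (C5) in Theorem \ref{th6.1}.
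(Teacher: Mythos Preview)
Your proposal is correct and follows essentially the same route as the paper: apply Theorem~\ref{th7.1} to compare $\s[\mc{P}-\mc{C},\mc{B},Q_T]$ with $\s[\mc{P}-\mc{C}_S,\mc{B},Q_T]$, use the identity \eqref{8.3} from Theorem~\ref{th8.1} for the latter, and conclude via Theorem~\ref{th6.1}. Your additional care in checking that $\mc{C}_S\in\mf{M}_M(F)$ and in handling the borderline case $\mc{C}_S=\mc{C}$ (where Theorem~\ref{th7.1}, stated for strict matrix inequality, does not apply directly) is appropriate; the paper glosses over both points.
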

\begin{proof}
By Theorems \ref{th7.1} and \ref{th8.1},
\[
  \s[\mc{P}-\mc{C},\mc{B},Q_T]\geq \s[\mc{P}-\mc{C}_S,\mc{B},Q_T]=\l_1[\mc{P}_1,\mf{B}_1,Q_T]- \frac{1}{T} \log\mu_1[\mc{C}_S].
\]
Thus, \eqref{8.11} implies $\s[\mc{P}-\mc{C},\mc{B},Q_T]>0$. Theorem \ref{th6.1} ends the proof.
\end{proof}
\vspace{0.4cm}

\end{document}